\renewcommand{\iff}{if and only if }
\newcommand{\st}{such that }
\newcommand{\wrt}{with respect to }
\newcommand{\la}{\longrightarrow}
\newcommand{\mapr}[1]{\overset{#1}\longrightarrow}
\newcommand{\dd}{\colon}
\newcommand{\ep}{\varepsilon}
\newcommand{\dif}{\partial}
\newcommand{\CpxG}{{\Cpx\mcG}}
\newcommand{\HtpG}{{\Htp\mcG}}
\newcommand{\DerG}{{\Der\mcG}}
\newcommand{\id}{\mathrm{id}}
\newcommand{\inv}{^{-1}}
\newcommand{\BC}[1]{\mathbb{BC}({#1})}
\newcommand{\OO}{\mathcal{O}}              
\newcommand{\PP}[2]{\mathbb{P}^{#1}_{#2}}  
\newcommand{\Hom}{\operatorname{Hom}}
\newcommand{\HOM}{\mathcal{H}\mathnormal{om}}
\newcommand{\End}{\operatorname{End}}
\newcommand{\Ext}{\operatorname{Ext}}
\newcommand{\Ker}{\operatorname{Ker}}
\newcommand{\Img}{\operatorname{Im}}
\newcommand{\Coker}{\operatorname{Coker}}
\newcommand{\resdim}{\operatorname{res.dim}}
\newcommand{\li}{\varinjlim}
\newcommand{\holim}{\operatorname{holim}}
\newcommand{\hocolim}{\operatorname{hocolim}}
\newcommand{\bbD}{\mathbb{D}}
\newcommand{\bbN}{\mathbb{N}}
\newcommand{\bbZ}{\mathbb{Z}}
\newcommand{\bbQ}{\mathbb{Q}}
\newcommand{\mcA}{\mathcal{A}}
\newcommand{\mcB}{\mathcal{B}}
\newcommand{\mcC}{\mathcal{C}}
\newcommand{\mcD}{\mathcal{D}}
\newcommand{\mcE}{\mathcal{E}}
\newcommand{\mcF}{\mathcal{F}}
\newcommand{\mcG}{\mathcal{G}}
\newcommand{\mcI}{\mathcal{I}}
\newcommand{\mcP}{\mathcal{P}}
\newcommand{\mcS}{\mathcal{S}}
\newcommand{\mcT}{\mathcal{T}}
\newcommand{\mcU}{\mathcal{U}} 
\newcommand{\mcX}{\mathcal{X}} 
\newcommand{\mcY}{\mathcal{Y}}
\newcommand{\ModR}{\mathrm{Mod}\textrm{-}R}
\newcommand{\modR}{\mathrm{mod}\textrm{-}R}
\newcommand{\Qco}[1]{\mathrm{Qcoh}({#1})}
\newcommand{\Cat}{\mathcal{C}\mathnormal{at}}
\newcommand{\CAT}{\mathcal{CAT}}
\newcommand{\unit}{\mathds{1}}
\newcommand{\Set}{\mathrm{Set}}
\newcommand{\Ab}{\mathrm{Ab}}
\newcommand{\op}{^\textrm{op}}
\newcommand{\Add}{\operatorname{Add}}
\newcommand{\Prod}{\operatorname{Prod}} 
\newcommand{\Der}[1]{\mathbf{D}({#1})}
\newcommand{\Htp}[1]{\mathbf{K}({#1})}
\newcommand{\Cpx}[1]{\mathbf{C}({#1})}
\newcommand{\cof}{\mathrm{Cof}}
\newcommand{\Cof}{\mathcal{C}}
\newcommand{\we}{\mathrm{W}}
\newcommand{\We}{\mathcal{W}}
\newcommand{\fib}{\mathrm{Fib}}
\newcommand{\Fib}{\mathcal{F}}
\newcommand{\Rder}[1]{\mathbf{R}{#1}}
\newcommand{\Lotimes}{\otimes^{\mathbf{L}}}
\newcommand{\RHom}{\Rder\Hom}
\newcommand{\Dtle}[1]{\mcD^{\le{#1}}}
\newcommand{\Dtge}[1]{\mcD^{\ge{#1}}}
\newcommand{\ttle}[1]{\tau^{\le{#1}}}
\newcommand{\ttge}[1]{\tau^{\ge{#1}}}
\theoremstyle{plain}
\newtheorem{thm}{Theorem}[section]
\newtheorem{lem}[thm]{Lemma}
\newtheorem{prop}[thm]{Proposition}
\newtheorem{cor}[thm]{Corollary}
\theoremstyle{definition}
\newtheorem{defn}[thm]{Definition}
\newtheorem{constr}[thm]{Construction}
\newtheorem{nota}[thm]{Notation}
\theoremstyle{remark}
\newtheorem{rem}[thm]{Remark}
\newtheorem{expl}[thm]{Example}
\title{Derived equivalences induced by big cotilting modules}
\author{Jan \v{S}\v{t}ov\'\i\v{c}ek}
\address{Charles University, Faculty of Mathematics and Physics, Department of Algebra \\
Sokolovsk\'{a} 83, 186 75 Prague 8, Czech Republic}
\email{stovicek@karlin.mff.cuni.cz}
\subjclass[2010]{Primary: 18E30. Secondary: 18E10, 18G55.}
\keywords{Big cotilting module, derived equivalence, derivator}
\date{\today}
\thanks{This research was supported by GA~\v{C}R P201/12/G028.}
\begin{document}
\begin{abstract}
We prove that given a Grothendieck category $\mcG$ with a tilting object of finite projective dimension, the induced triangle equivalence sends an injective cogenerator of $\mcG$ to a big cotilting module. Moreover, every big cotilting module can be constructed like that in an essentially unique way. We also prove that the triangle equivalence is at the base of an equivalence of derivators, which in turn is induced by a Quillen equivalence \wrt suitable abelian model structures on the corresponding categories of complexes.
\end{abstract}

\maketitle

\setcounter{tocdepth}{1}
\tableofcontents

\section*{Introduction}

If $\mcG$ is a Grothendieck category with a tilting object $T \in \mcG$ (i.e.\ $T$ is a rigid compact generator for the unbounded derived category of $\mcG$) and $R$ is the endomorphism ring of $T$, then we have a triangle equivalence from $\Der\mcG$ to $\Der R$ which sends $T$ to $R$. A motivating question for this paper, answered in~\cite{HRS96,CGM07} for tilting objects of projective dimension $1$, is when we can go back. That is, if we start with a ring $R$, under what circumstances can we find a Grothendieck category $\mcG$ with a tilting object $T$ and a triangle equivalence $\Der R \to \Der\mcG$ sending $R$ to~$T$?

\smallskip

Our approach is inspired by classical representation theory where, in the context of module categories over artin algebras, a finitely generated injective cogenerator of $\mcG$ is always sent to a finitely generated cotilting module. The general situation is analogous, but an injective cogenerator of an arbitrary Grothendieck category $\mcG$ is far from being finitely generated in any reasonable sense. The solution is to pass to so-called big cotilting modules, introduced in~\cite{ACo01}. Now we can explain our first result (Theorem~\ref{thm:cotilt-from-tilt}): If $\mcG$ is a Grothendieck category, $T$ is a tilting object of finite projective dimension (in the sense that $\Ext^i_\mcG(T,-) \equiv 0$ for $i \gg 0$) and $W \in \mcG$ is an injective cogenerator, then the derived equivalence $\Der\mcG \to \Der R$ sends $W$ to a big cotilting $R$-module $C$.

This is, however, not the end of the story. We also prove that every big cotilting module occurs like that in an essentially unique way (Theorems~\ref{thm:heart-Grothendieck} and~\ref{thm:bijection}). In order to do so, we employ a model structure associated with the cotilting module (Theorem~\ref{thm:cotilt-models}), which is constructed using a technique discovered by Hovey and Gillespie, \cite{Hov02,Gil04}. This model structure is very handy for reconstructing the Grothendieck category $\mcG$ from the cotilting module $C \in \ModR$. Not surprisingly, we construct $\mcG$ as the heart of a certain $t$-structure in $\Der R$ (Theorem~\ref{thm:cotilt-t-structure}).

Along the way, we obtain a lot of extra information. Remarkably, there is a much tighter connection between $\mcG$ and $\ModR$ than only the derived equivalence. It is a rather easy observation that $\Hom_\mcG(T,-)\dd \Cpx\mcG \to \Cpx R$ is a right Quillen equivalence (Theorem~\ref{thm:QE}) if we choose suitable model structures on~$\Cpx\mcG$ and~$\Cpx R$.

There is a not so well-known consequence of the latter fact. Both $\Der\mcG$ and $\Der R$ are at the base of a Grothendieck derivator, and it follows that the corresponding derivators must be equivalent. In pedestrian terms, we have a derived equivalence $\Der{\mcG^I} \to \Der{\ModR^I}$ for every diagram $I$ and all these equivalences are compatible with the scalar restriction functors. In fact, we prove the equivalence of derivators directly (Theorem~\ref{thm:equiv-derivators}) in a way which is practical for our computations. This seems to have been a neglected fact so far, even in the case considered by Happel, Reiten, Smal\o{}~\cite{HRS96} and Colpi, Gregorio and Mantese~\cite{CGM07}. As a consequence, we can for instance prove that the tilting object $T \in \mcG$ must be finitely presentable in~$\mcG$ (Theorem~\ref{thm:comp-gen-to-fp}).

\smallskip

Although some questions are answered, others come to ones mind. For instance, what if we consider derived equivalences between two Grothendieck categories? This is a situation which one encounters in algebraic geometry, see~\cite[Ch. 7]{AHK07}. The injective cogenerator of one of the categories would then be sent to what is best called a big cotilting object or even a big cotilting complex. As opposed to tilting complexes in the sense of Rickard~\cite{Rick89} or big cotilting modules~\cite{ACo01}, neither of these two concepts seems to have been even defined.

\subsection{Acknowledgment}

I would like to thank Ivo Herzog for suggesting an alternative construction of $\mcG$ via the functor category, which considerably simplified the proof of Theorem~\ref{thm:heart-Grothendieck}.

\section{Big cotilting modules from injective cogenerators}
\label{sec:inj-cogen-to-cotilt}

We start with explaining our motivation which shows how big cotilting modules occur naturally. Suppose that $\mcG$ is a Grothendieck category. We will denote by $\Cpx\mcG$ the category of all cochain complexes
\[ \cdots \la X^{-1} \mapr{\dif^{-1}} X^0 \mapr{\dif^0} X^1 \mapr{\dif^1} X^2 \la \cdots \]
over $\mcG$, by $\Htp\mcG$ the corresponding homotopy category of complexes, and by $\Der\mcG$ the unbounded derived category of $\mcG$. If $\mcG = \ModR$ is the category of right $R$-modules over a unital ring $R$, we shorten the notation to $\Cpx R$, $\Htp R$ and $\Der R$. 
We recall a crucial notion for our discussion.

\begin{defn} \label{defn:tilt}
Let $\mcG$ be a Grothendieck category and $T \in \mcG$ be an object. Then $T$ is called \emph{tilting} if there is a ring $R$ and a triangle equivalence
\[ \Der \mcG \la \Der R \]
which sends $T$ to $R$.
\end{defn}

\begin{rem} \label{rem:classical}
Since we will deal with big cotilting modules later on, we will often call objects $T$ as above \emph{classical tilting} objects to stress that they must be small in a suitable sense. This means that $T$ is compact in $\Der\mcG$ (see Proposition~\ref{prop:tilt-char} just below) and later in~\S\ref{subsec:htpy-finite} we will prove that under appropriate assumptions $T$ must be finitely presentable in $\mcG$ in the sense of~\cite{AR94,GU71}.
\end{rem}

The purpose oriented definition above can be translated, using standard results, to the properties of $T$ which are often easier checked. For later reference, we first define the related terminology.

\begin{defn} \label{defn:comp-gen}
Let $\mcT$ be a triangulated category with small coproducts. An object $Z \in \mcT$ is \emph{compact} if, given any small collection $(X_i \mid i \in I)$ of objects of $\mcT$, the canonical homomorphism of abelian groups
\[ \coprod_i \Hom_\mcT(Z,X_i) \la \Hom_\mcT\Big(Z,\coprod_i X_i\Big) \]
is an isomorphism. Equivalently, every morphism $Z \to \coprod_i X_i$ in $\mcT$ has only finitely many non-zero components.

An object $Z \in \mcT$ is a \emph{generator} of $\mcT$ (in the triangulated sense) if, given a non-zero object $0 \ne X \in \mcT$, there exists $i \in \bbZ$ \st  $\Hom_\mcT(\Sigma^i Z,X) \ne 0$.
\end{defn}

\begin{prop} \label{prop:tilt-char}
Let $\mcG$ be a Grothendieck category and $T \in \mcG$ be an object. Then $T$ is tilting \iff the following three properties are satisfied:
\begin{enumerate}
\item[(T1)] $T$ is a compact object in $\Der\mcG$.

\item[(T2)] $T$ is rigid. That is, $\Ext^i_{\mcG}(T,T) = 0$ for each $i > 0$.

\item[(T3)] $T$ is a generator of $\Der\mcG$.
\end{enumerate}

In such a case we have $R \cong \End_\mcG(T)$, and if $R = \End_\mcG(T)$, then the derived functor $\RHom_\mcG(T,-)\dd \Der\mcG \la \Der R$ is a triangle equivalence sending $T$ to $R$.
\end{prop}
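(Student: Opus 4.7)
The plan is a two-way argument. For the easy direction ($T$ tilting $\Rightarrow$ (T1)--(T3)): if $F\dd\Der\mcG \to \Der R$ is a triangle equivalence with $F(T)=R$, all three properties hold trivially for $R \in \Der R$ (where $\Hom_{\Der R}(R,-)$ equals $H^0$, so commutes with coproducts and detects vanishing, while $\Ext^i_R(R,R) = 0$ for $i > 0$), and they transfer via $F$ since any triangle equivalence between categories with small coproducts commutes with them. The ring isomorphism $R \cong \End_\mcG(T)$ is read off from $\End_{\Der R}(R) = R$.

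For the converse, I would set $R := \End_\mcG(T)$ and construct $F := \RHom_\mcG(T,-)\dd\Der\mcG \to \Der R$ via K-injective resolutions, which are available in $\Cpx\mcG$ because $\mcG$ is Grothendieck. Compactness (T1) ensures that $F$ commutes with arbitrary small coproducts (checked on cohomology via the identification $H^n F(X) = \Hom_{\Der\mcG}(T, X[n])$). Rigidity (T2) then gives $F(T[n]) \cong \Ext^n_\mcG(T,T)$, which is $R$ for $n=0$ and zero otherwise. A standard d\'evissage shows $F$ is fully faithful: for each fixed $n$, the full subcategory of $Y \in \Der\mcG$ on which the natural map $\Hom_{\Der\mcG}(T[n], Y) \to \Hom_{\Der R}(R[n], FY)$ is an isomorphism is triangulated, closed under coproducts (using (T1) and that $F$ preserves coproducts), and contains every $T[m]$, hence contains $\Loc T$. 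Applying the same argument in the first variable upgrades this to an isomorphism on all Hom-sets. Essential surjectivity then follows because the essential image of $F$ is triangulated, closed under coproducts (as $F$ is fully faithful and preserves coproducts), and contains $R$, so it must contain $\Loc R = \Der R$.

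The main obstacle, on which both fully faithfulness and essential surjectivity rest, is the equality $\Loc T = \Der\mcG$: the triangulated generator condition (T3) must be upgraded to generation as a localizing subcategory. I would deduce this via Bousfield localization in $\Der\mcG$: since $\Der\mcG$ admits all small coproducts and $T$ is compact, the inclusion $\Loc T \hookrightarrow \Der\mcG$ admits a right adjoint, yielding for every $Y$ a localization triangle $Y' \to Y \to Y''$ with $Y' \in \Loc T$ and $Y'' \in (\Loc T)^\perp$; now (T3) forces $Y''=0$, whence $Y \in \Loc T$. An equivalent route is Keller's theorem identifying algebraic compactly generated triangulated categories with derived categories of dg-algebras: applied to the compact generator $T$, it produces an equivalence $\Der\mcG \simeq \Der A$ for $A$ the dg-endomorphism algebra of $T$, and rigidity (T2) collapses $A$ to the ordinary ring $R = \End_\mcG(T)$ concentrated in degree zero.
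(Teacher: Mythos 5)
Your argument is correct and follows essentially the same route as the paper: both directions reduce to the standard compact-generator criterion, with $\RHom_\mcG(T,-)$ sending $T$ to $R$ (via (T2)), preserving coproducts (via (T1)), and inducing isomorphisms on the graded endomorphisms of the generators. The only difference is that you spell out the ``formal implication'' (the d\'evissage over $\Loc T$ and the Bousfield localization argument giving $\Loc T = \Der\mcG$, or alternatively Keller's dg-algebra theorem), whereas the paper delegates exactly these standard steps to the cited literature.
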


\begin{proof}
If $T$ is tilting, then $T$ must satisfy conditions (T1)--(T3) as $R$ is well known to satisfy the analogous conditions in $\Der R$ and clearly $R \cong \End_R(R) \cong \End_\mcG(T)$.

The converse goes back to~\cite{CPS86,Hap87,Kel94,Rick89}; see also~\cite[\S6]{AJS03}. Referring the sources above for details, we can argue as follows. Let us set $R = \End_\mcG(T)$. Then the functor
\[ \RHom_\mcG(T,-)\dd \Der\mcG \la \Der R \]
sends $T$ to a complex whose $i$-th cohomology is isomorphic to $\Ext^i_{\mcG}(T,T)$ for all $i\in\bbZ$, where we use the standard convention that negative Ext groups vanish. Because of (T2), $\RHom_\mcG(T,T)$ is isomorphic to $R$ in $\Der R$. Hence $\RHom_\mcG(T,-)$ sends the compact generator $T$ to the compact generator $R$, preserves coproducts and induces isomorphisms
\[ \Hom_{\Der\mcG}(T,\Sigma^i T) \la \Hom_{\Der R}(R,\Sigma^i R) \]
for all $i \in \bbZ$. This already formally implies that $\RHom_\mcG(T,-)$ is a triangle equivalence.
\end{proof}

\begin{expl} \label{expl:beilinson}
A well known class of examples of Grothendieck categories with a tilting object is due to Be{\u\i}linson~\cite{Bei78}. We put $\mcG = \Qco{\PP{n}{k}}$, where $n \ge 1$ and $k$ is a field. The tilting object is $T = \OO \oplus \OO(1) \oplus \dots \oplus \OO(n)$, a coproduct of the $n+1$ twists of the structure sheaf, and $R = \End_\mcG(T)$ is a finite dimensional algebra over $k$, nowadays called the \emph{Be{\u\i}linson algebra}. However, there exist many more examples from algebraic geometry and representation theory.
\end{expl}

From the point of view of homological algebra, any Grothendieck category $\mcG$ has an important object: an injective cogenerator $W$ (see~\cite[Th\'eor\`eme 1.10.1]{Gro57} and~\cite[Corollary 2.11]{Mitch64}). One can ask, given a tilting object $T \in \mcG$, which properties characterize the image of $W$ under the triangle equivalence $\RHom_\mcG(T,-)$. The rest of the paper is mostly devoted to giving a satisfactory answer to this question.

In order to formulated our answer, we need the following definition from~\cite{ACo01}. If $M$ is an object in a Grothendieck category, we will denote by $\Prod M$ the class of all summands in products of copies of $M$ and by $\Add M$ the class of all summands in coproducts of copies of $M$. Thus, if $W \in \mcG$ is an injective cogenerator, then $\Prod W$ is precisely the class of all injective objects in $\mcG$. Now we can define:

\begin{defn}[\cite{ACo01}] \label{defn:big-cotilt}
Let $R$ be a ring, $C$ a right $R$-module and $n \ge 0$. Then $C$ is a \emph{big $n$-cotilting module} if it satisfies the following three conditions:
\begin{enumerate}
\item[(BC1)] $C$ has injective dimension bounded by $n$. That is there exists an exact sequence in $\ModR$ of the form
\[ 0 \la C \la E_0 \la E_1 \la \cdots \la E_n \la 0 \]
with all the $E_i$ injective.

\item[(BC2)] $\Ext^j_R(C^I,C) = 0$ for every $j > 0$ and every set $I$. Here, $C^I$ is the product of copies of $C$ indexed by $I$.

\item[(BC3)] There exists an exact sequence
\[ 0 \la C_r \la \dots \la C_1 \la C_0 \la Q \la 0 \]
in $\ModR$ \st $Q$ is an injective cogenerator of $\ModR$, $r \ge 0$ and $C_i \in \Prod C$ for all $i$.
\end{enumerate}

Moreover, $C$ is called \emph{big cotilting} if it is big $n$-cotilting for some $n \ge 0$.
\end{defn}

The adjective \emph{big} refers to the fact that $C$ need not be finitely generated as a module. There is an extensive theory for such modules developed by several authors which covers various homological, model theoretic and approximation properties. We refer to~\cite{GT06} for the results and references. A part of the theory will be recalled and used later in the text. Here we only mention that the sequence in (BC3) can be taken so that $r = n$, where $n$ is the injective dimension of $C$; see~\cite[Proposition 3.5]{Baz04}.

In order not to postpone the main point any further, we shall state the main result of the section:

\begin{thm} \label{thm:cotilt-from-tilt}
Let $\mcG$ be a Grothendieck category, $W \in \mcG$ be an injective cogenerator and $T \in \mcG$ be a classical tilting object. Denote $R = \End_\mcG(T)$. Then all the cohomologies of $\RHom_\mcG(T,W) \in \Der R$ in degrees different from zero vanish, so that $\RHom_\mcG(T,W) \cong C$ in $\Der R$ for some module $C \in \ModR$.

Suppose further that there exists $n \ge 0$ \st $\Ext^{n+1}_\mcG(T,-) \equiv 0$ (one often says that the projective dimension of $T$ in $\mcG$ is less than or equal to $n$). Then $C$ is a big $n$-cotilting module. 
\end{thm}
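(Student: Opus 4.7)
First, $W$ being injective in $\mcG$ yields $\Ext^i_\mcG(T,W)=0$ for $i>0$, so $\RHom_\mcG(T,W)$ is concentrated in degree $0$ and identifies with the module $C=\Hom_\mcG(T,W)$. Writing $H=\RHom_\mcG(T,-)$ for the equivalence of Proposition~\ref{prop:tilt-char} and fixing a quasi-inverse $G$, the key structural input is that the hypothesis $\Ext^i_\mcG(T,-)\equiv 0$ for $i>n$, fed into the hypercohomology spectral sequence $E_2^{p,q}=\Ext^p_\mcG(T,H^q(X))\Rightarrow H^{p+q}(H(X))$, shows $H$ to have cohomological amplitude $n$: it sends the aisle $\mathbf{D}^{\le k}(\mcG)$ into $\mathbf{D}^{\le k+n}(R)$ and the coaisle $\mathbf{D}^{\ge k}(\mcG)$ into $\mathbf{D}^{\ge k}(R)$ for the standard $t$-structures. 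A routine argument using $t$-structure orthogonalities and $G=H^{-1}$ then gives the dual bounds $G(\mathbf{D}^{\le k}(R))\subset \mathbf{D}^{\le k}(\mcG)$ and $G(\mathbf{D}^{\ge k}(R))\subset\mathbf{D}^{\ge k-n}(\mcG)$; in particular $G$ sends $\ModR$ into objects with cohomology concentrated in degrees $[-n,0]$.

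For (BC1), the plan is to express $\Ext^i_R(M,C)=\Hom_{\Der R}(M,C[i])=\Hom_{\Der\mcG}(G(M),W[i])$ via the equivalence, and then use K-injectivity of the single injective $W$ to rewrite the right-hand Hom as $\Hom_\mcG(H^{-i}(G(M)),W)$. For $i>n$ the cohomology $H^{-i}(G(M))$ vanishes by the amplitude bound on $G$, so $C$ has injective dimension at most $n$. For (BC2), products of injectives remain injective in the Grothendieck category $\mcG$ and $H$ preserves products (it is a triangle equivalence), so $H(W^I)\cong C^I$ sits in degree $0$; hence $\Ext^j_R(C^I,C)=\Hom_{\Der\mcG}(W^I,W[j])=\Ext^j_\mcG(W^I,W)=0$ for $j>0$.

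For (BC3), fix any injective cogenerator $Q$ of $\ModR$ and choose a bounded-below injective resolution $G(Q)\to I^\bullet$ in $\mcG$ starting in degree $-n$, which is possible since $G(Q)\in\mathbf{D}^{\ge -n}(\mcG)$. A computation symmetric to (BC1), using that $Q$ is K-injective, yields $\Ext^i_\mcG(X,G(Q))\cong\Hom_R(H^{-i}(H(X)),Q)$, which vanishes for every $X\in\mcG$ and every $i>0$ because $H(X)\in\mathbf{D}^{\ge 0}(R)$. Applied to the short exact sequence $0\to\Ker(d^0)\to I^0\to\Img(d^0)\to 0$, where exactness of $I^\bullet$ in positive degrees is used, the vanishing of $\Ext^1_\mcG(-,G(Q))$ on $\mcG$ forces $\Ker(d^0)$ to be injective. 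The smart truncation $J^\bullet:=\ttle{0}I^\bullet$ is then a complex of injective objects of $\mcG$ concentrated in degrees $[-n,0]$ and quasi-isomorphic to $G(Q)$, and applying $H$ termwise produces a complex $H(J^\bullet)$ in $\ModR$ concentrated in the same degrees and quasi-isomorphic to $Q$; its exactness is the desired sequence $0\to H(J^{-n})\to\cdots\to H(J^0)\to Q\to 0$, with each $H(J^{-i})\in\Prod C$ because $J^{-i}$ is a summand of some $W^I$ and $H$ preserves products and summands. The main obstacle to formalising is the amplitude bound $G(\ModR)\subset\mathbf{D}^{[-n,0]}(\mcG)$ itself, which is precisely where the projective dimension hypothesis on $T$ enters and which controls both (BC1) and the shape of the resolution in (BC3).
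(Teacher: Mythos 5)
Your overall architecture is sound and genuinely different from the paper's: where the paper introduces the Brown--Comenetz dual $\BC T$ (Corollary~\ref{cor:inj-cogen-in-D(G)} and Lemma~\ref{lem:way-out}) both to bound the cohomology of preimages of modules and to produce the complex realizing (BC3), you work with two-sided amplitude bounds for $H=\RHom_\mcG(T,-)$ and its quasi-inverse $G$, and you obtain (BC3) from a truncated injective resolution of $G(Q)$ for an \emph{arbitrary} injective cogenerator $Q$. Granting the bound $G(\ModR)\subseteq\mathbf{D}^{[-n,0]}(\mcG)$, your (BC1) and (BC3) arguments are correct, and the observation that exactness of $I^\bullet$ in positive degrees plus $\Hom_{\DerG}(X,\Sigma G(Q))=0$ for all $X\in\mcG$ forces $\Ker d^0$ to be injective is a clean substitute for the splitting argument in Lemma~\ref{lem:way-out}(1). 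In (BC2) you should make explicit the point the paper makes: the inclusion $\mcG\to\DerG$ need not preserve products, and it is precisely the injectivity of $W^I$ that guarantees $W^I$ is also the product in $\DerG$, so that $H(W^I)\cong C^I$.

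The one step that is not established as written is the bound you yourself single out as the main obstacle, namely $H(\mathbf{D}^{\le k}(\mcG))\subseteq\mathbf{D}^{\le k+n}(R)$. The hypercohomology spectral sequence justifies this for cohomologically \emph{bounded} complexes, but in the orthogonality argument deducing $G(\ModR)\subseteq\mathbf{D}^{\ge -n}(\mcG)$ you must apply the bound to $\ttle{-n-1}G(M)$, which is a priori unbounded below (its boundedness below is exactly what is being proved). For such complexes the convergence of the spectral sequence is the whole issue, and you cannot simply realize the complex as a homotopy limit of its truncations either, since products in a general Grothendieck category need not be exact (cf.\ Remark~\ref{rem:pdim-of-T}); this is the difficulty that Lemma~\ref{lem:way-out} circumvents by transporting the problem through the equivalence to $\Der R$, where products are exact. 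Your route can be repaired without Brown--Comenetz duality by using compactness of $T$ (condition (T1)): every complex $A$ with cohomology in degrees $\le k$ is the homotopy colimit of its brutal truncations $\sigma_{\ge -m}A$ (this uses only AB5), each $\sigma_{\ge -m}A$ is cohomologically bounded with cohomology in degrees $\le k$, and compactness gives $\Hom_{\DerG}(T,\Sigma^p A)\cong\varinjlim_m\Hom_{\DerG}(T,\Sigma^p\sigma_{\ge -m}A)$, which vanishes for $p>k+n$ by the bounded case. With that insertion your proof goes through.
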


\begin{rem} \label{rem:why-big}
This result clearly demonstrates why we should deal with \emph{big} cotilting modules. It happens very often that injective objects of Grothendieck categories are not small in any reasonable sense (finitely presentable, compact in the derived category or similar). Thus, we cannot expect $C$ to be a finitely generated or presented module, and this is indeed the case in examples.
\end{rem}

\begin{rem} \label{rem:P1-and-RR}
If $\mcG = \Qco{\PP 1k}$, where $k$ is an algebraically field, and $T = \OO \oplus \OO(1)$, the resulting $k$-algebra $R = \End_\mcG(T)$ is a so-called Kronecker algebra. Finitely generated $R$-modules are well understood (see~\cite[\S VIII.7]{ARS97}) and this was used by Ringel and Reiten to obtain in~\cite[Proposition 10.1]{RR06} a very concrete description of what turns out to be a big cotilting module coming from Theorem~\ref{thm:cotilt-from-tilt} in that particular case.
\end{rem}

\begin{rem} \label{rem:loc-noeth}
If $\mcG$ is a locally noetherian Grothendieck category, such as in the case of Example~\ref{expl:beilinson} or Remark~\ref{rem:P1-and-RR}, then there is an injective cogenerator $W \in \mcG$ \st $\Prod W = \Add W$. It turns out by inspecting the proof of Theorem~\ref{thm:cotilt-from-tilt} that in such a case $\Prod C = \Add C$ in $\ModR$ for the cotilting module $C$. This among others implies that $C$ is a $\Sigma$-pure-injective module by~\cite[Lemma 1.2.23]{GT06} and this is a very restrictive condition on $C$. A partial converse saying that if a $1$-cotilting module $C$ coming from a Grothendieck category $\mcG$ with a classical tilting object is $\Sigma$-pure-injective, then $\mcG$ is locally noetherian, has been proved in~\cite{CMT10}.
\end{rem}

\begin{rem} \label{rem:pdim-of-T}
If $\mcG$ has exact products, then the existence of $n\ge0$ \st $\Ext^{n+1}_\mcG(T,-) \equiv 0$ follows from the compactness of $T$. Indeed, if there were no such $n$, then we would have objects $M_i \in \mcG$ with $\Ext^{i}_\mcG(T,M_i) \ne 0$ for arbitrary high $i > 0$. In particular, there would be a morphisms $T \to \prod_i \Sigma^i M_i$ in $\DerG$ with infinitely many non-zero components. But using the exactness of products, we have $\prod_i \Sigma^i M_i = \coprod_i \Sigma^i M_i$ both in $\CpxG$ and in $\DerG$, contradicting the compactness of~$T$.

We do not know, however, whether $\Ext^{n+1}_\mcG(T,-) \equiv 0$ for some $n$ follows from the compactness if products are not exact. This is in fact closely related to rigidity of $t$-structures defined in~\cite[\S5]{AJS03}. Various Grothendieck categories with non-exact products come from algebraic geometry (see for instance \cite[Example 4.9]{Kr05}), but for these an integer $n$ as above exists in all examples we are aware of.
\end{rem}

Despite being somewhat technical, the proof of Theorem~\ref{thm:cotilt-from-tilt} uses only standard tools. First we identify the copy of $\ModR$ which occurs in $\Der\mcG$ thanks to the equivalence $\RHom_\mcG(T,-)$.

\begin{lem} \label{lem:copy-of-ModR}
Let $\mcG$ be a Grothendieck category and $T \in \mcG$ be a classical tilting object. Given $X \in \Der\mcG$, the following are equivalent:
\begin{enumerate}
\item $\RHom_\mcG(T,X)$ is isomorphic to an $R$-module.
\item $\Hom_{\Der\mcG}(\Sigma^i T,X) = 0$ for all $i \ne 0$.
\end{enumerate}
In particular, $\RHom_\mcG(T,W) \in \ModR$ whenever $W \in \mcG$ is injective.
\end{lem}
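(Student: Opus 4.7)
The plan is to translate both conditions into the same cohomological vanishing statement. The input I would invoke is that a complex $Y \in \Der R$ is isomorphic to an $R$-module concentrated in degree zero if and only if $H^i(Y) = 0$ for all $i \ne 0$; this is the standard description of the heart of the natural $t$-structure on $\Der R$.

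The main step is then to identify $H^i(\RHom_\mcG(T,X))$ intrinsically in $\Der\mcG$. Since $\RHom_\mcG(T,-)$ is a triangle equivalence sending $T$ to $R$ by Proposition~\ref{prop:tilt-char}, I would write
\[ H^i\bigl(\RHom_\mcG(T,X)\bigr) \cong \Hom_{\Der R}\bigl(R,\Sigma^i\RHom_\mcG(T,X)\bigr) \cong \Hom_{\Der\mcG}(T,\Sigma^i X) \cong \Hom_{\Der\mcG}(\Sigma^{-i}T,X). \]
With this identification in hand, condition (1) --- vanishing of all non-zero cohomologies --- is exactly condition (2) after reindexing $i \mapsto -i$.

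For the ``in particular'' claim, I would use that an injective $W \in \mcG$ admits the trivial resolution $0 \to W \to W \to 0$, so $\Ext^j_\mcG(T,W) = 0$ for every $j > 0$, while the usual convention that negative Ext groups vanish handles $j < 0$. Translating via the formula above gives $\Hom_{\Der\mcG}(\Sigma^i T, W) \cong \Ext^{-i}_\mcG(T,W) = 0$ for every $i \ne 0$, so (2) holds for $X = W$ and therefore $\RHom_\mcG(T,W)$ lies in $\ModR$.

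There is no substantial obstacle in this argument; the whole proof is essentially bookkeeping, built on Proposition~\ref{prop:tilt-char} together with the definition of the natural $t$-structure on $\Der R$. The only place demanding mild attention is the sign convention, namely tracking how the degree shift $\Sigma^i$ on one side of the equivalence corresponds to the cohomological degree $-i$ on the other, so that the indexing in condition (2) comes out correctly.
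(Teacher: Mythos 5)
Your proof is correct and follows essentially the same route as the paper: translate ``being a module'' in $\Der R$ into vanishing of $\Hom_{\Der R}(\Sigma^i R,-)$ (equivalently of the cohomologies away from degree zero), transport this through the equivalence $\RHom_\mcG(T,-)$ sending $T$ to $R$, and handle the injective case via $\Hom_{\Der\mcG}(\Sigma^i T,W)\cong \Ext^{-i}_\mcG(T,W)=0$ for $i\ne 0$. The sign bookkeeping you flag is handled correctly.
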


\begin{proof}
Clearly $Y \in \Der R$ is isomorphic to a module \iff $\Hom_{\Der R}(\Sigma^iR,Y) = 0$ for all $i \ne 0$. Then we simply use that $\RHom_\mcG(T,-)\dd \Der\mcG \to \Der R$ is a triangle equivalence which sends $T$ to $R$. Regarding the last statement, we use that $\Hom_{\Der\mcG}(\Sigma^i T,W) \cong \Ext^{-i}_\mcG(T,W) = 0$ for all $i \ne 0$.
\end{proof}

Our next goal is to identify the complex corresponding to an injective cogenerator $Q \in \ModR$ in $\Der\mcG$. We will find it using a variant of Brown-Comenetz duality~\cite{BC76} in the spirit of~\cite[Remark 8.5.22 and \S8.7]{Nee01}. Versions of this construction are also known as Serre duality in algebraic geometry (see~\cite{BoKa89}) and Auslander-Reiten translation in representation theory (see~\cite{KrLe06}).

\begin{defn} \label{defn:BC-dual}
Let $\mcT$ be a triangulated category with small coproducts and let $Z \in \mcT$ be a compact object. A \emph{Brown-Comenetz dual} of $Z$ in $\mcT$ is an object $\BC Z \in \mcT$ \st there exists a natural equivalence
\[ \Hom_\mcT\big(-,\BC Z\big) \mapr{\cong} \Hom_\bbZ\big(\Hom_\mcT(Z,-), \bbQ/\bbZ\big) \]
of functors $\mcT\op \to \Ab$.
\end{defn}

Note that if $\BC Z$ exists, it is determined uniquely up to isomorphism by the Yoneda lemma. A sufficient condition for the existence is given by the following proposition.

\begin{prop} \label{prop:BC-dual}
Let $\mcT$ be a triangulated category with small coproducts and a compact generator in the sense of Definition~\ref{defn:comp-gen}. Then every compact object of $\mcT$ has a Brown-Comenetz dual.
\end{prop}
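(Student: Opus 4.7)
The plan is to use Brown representability for triangulated categories: under our hypothesis on $\mcT$ (small coproducts plus a compact generator), a cohomological functor $F\dd\mcT\op\to\Ab$ is representable \iff it sends coproducts in $\mcT$ to products in $\Ab$. So I would define
\[ F(X) \;=\; \Hom_\bbZ\bigl(\Hom_\mcT(Z,X),\,\bbQ/\bbZ\bigr) \]
on objects, with the evident functorial action on morphisms, and then verify these two conditions.

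First I would check that $F$ is cohomological. Given a distinguished triangle $X\to Y\to W\to \Sigma X$ in $\mcT$, applying $\Hom_\mcT(Z,-)$ yields a long exact sequence of abelian groups because $\Hom_\mcT(Z,-)$ is cohomological for any object $Z$. Since $\bbQ/\bbZ$ is an injective cogenerator of $\Ab$, the functor $\Hom_\bbZ(-,\bbQ/\bbZ)$ is exact, so precomposing preserves exactness and delivers a long exact sequence in $F$-values. Hence $F$ is cohomological.

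Next I would check that $F$ converts coproducts to products. Given a small family $(X_i)_{i\in I}$ in $\mcT$, compactness of $Z$ gives
\[ \Hom_\mcT\Bigl(Z,\coprod_i X_i\Bigr) \;\cong\; \coprod_i \Hom_\mcT(Z,X_i), \]
and applying $\Hom_\bbZ(-,\bbQ/\bbZ)$ converts this coproduct of abelian groups to a product, yielding $F(\coprod_i X_i)\cong \prod_i F(X_i)$, as required.

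Having verified both conditions, Brown representability (as stated for triangulated categories with a compact generator, e.g.\ in the form due to Neeman) produces an object $\BC Z\in\mcT$ together with a natural isomorphism $\Hom_\mcT(-,\BC Z)\cong F$, which is exactly the defining property of the Brown--Comenetz dual. The only real obstacle is to invoke the correct form of Brown representability; the verification of the two hypotheses is formal, using solely the compactness of $Z$ and the injectivity of $\bbQ/\bbZ$ as an abelian group. (Uniqueness, already noted after Definition~\ref{defn:BC-dual}, follows from Yoneda.)
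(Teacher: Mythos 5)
Your argument is correct and is essentially the proof given in the paper: both define the functor $X \mapsto \Hom_\bbZ\bigl(\Hom_\mcT(Z,X),\bbQ/\bbZ\bigr)$, verify it is cohomological and takes coproducts to products (using compactness of $Z$ and injectivity of $\bbQ/\bbZ$), and then invoke Neeman's Brown representability theorem for compactly generated triangulated categories. Nothing is missing.
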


\begin{proof}
Given a compact object $Z \in \mcT$, the functor
\[ \Hom_\bbZ\big(\Hom_\mcT(Z,-), \bbQ/\bbZ\big)\dd \mcT\op \la \Ab \]
turns triangles to long exact sequences and preserves products. Under the condition on~$\mcT$, the functor is representable by an object of $\mcT$ by~\cite[Theorem 8.3.3]{Nee01}. The representing object is by the very definition $\BC Z$, the Brown-Comenetz dual of~$Z$.
\end{proof}

The above is already sufficient to construct a preimage of an injective cogenerator of $\ModR$ directly in $\Der\mcG$.

\begin{cor} \label{cor:inj-cogen-in-D(G)}
Let $\mcG$ be a Grothendieck category with a classical tilting object $T$. Let $R = \End_\mcG(T)$ and denote $Q = \Hom_\bbZ(R,\bbQ/\bbZ)$, viewed as a right $R$-module. Then $Q$ is an injective cogenerator of $\ModR$ and $\RHom_\mcG(T,\BC T) \cong Q$.
\end{cor}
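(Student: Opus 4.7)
The plan is to verify the two claims separately, using the fact that $T$ is a compact generator of $\Der\mcG$ (by Proposition~\ref{prop:tilt-char}) so that $\BC T$ exists by Proposition~\ref{prop:BC-dual}, and using the defining natural isomorphism of $\BC T$ to reduce everything to Ext-computations involving $T$ itself.

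First I would recall that $Q = \Hom_\bbZ(R,\bbQ/\bbZ)$ is an injective cogenerator of $\ModR$. Injectivity is immediate from the adjunction $\Hom_R(-,\Hom_\bbZ(R,\bbQ/\bbZ)) \cong \Hom_\bbZ(-,\bbQ/\bbZ)$, since $\bbQ/\bbZ$ is an injective abelian group. For the cogenerator property, given any nonzero $M \in \ModR$ and $0 \ne m \in M$, one chooses a nonzero $\bbZ$-homomorphism $M \to \bbQ/\bbZ$ not vanishing at $m$ (using divisibility of $\bbQ/\bbZ$) and transports it through the adjunction to a nonzero $R$-homomorphism $M \to Q$ detecting $m$. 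This is standard and I would state it as a well-known fact.

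Next, I would compute $\Hom_{\Der\mcG}(\Sigma^i T,\BC T)$ for all $i \in \bbZ$. By the defining natural isomorphism of the Brown-Comenetz dual (Definition~\ref{defn:BC-dual}), this group is isomorphic to $\Hom_\bbZ(\Hom_{\Der\mcG}(T,\Sigma^i T),\bbQ/\bbZ) = \Hom_\bbZ(\Ext^i_\mcG(T,T),\bbQ/\bbZ)$. By rigidity of $T$ (property (T2)) together with the standard vanishing $\Ext^i_\mcG(T,T) = 0$ for $i<0$, this vanishes for all $i \ne 0$. Thus Lemma~\ref{lem:copy-of-ModR} applies, giving $\RHom_\mcG(T,\BC T) \cong N$ in $\Der R$ for some module $N \in \ModR$, and moreover
\[ N \cong \Hom_{\Der\mcG}(T,\BC T) \cong \Hom_\bbZ(\End_\mcG(T),\bbQ/\bbZ) = \Hom_\bbZ(R,\bbQ/\bbZ) = Q \]
as abelian groups.

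The one point that needs a moment's thought is that this identification $N \cong Q$ is an isomorphism of right $R$-modules, not merely of abelian groups. This follows from the naturality of the Brown-Comenetz isomorphism in the first variable: the right $R$-action on $\Hom_{\Der\mcG}(T,\BC T)$ is by precomposition with endomorphisms of $T$, which under the natural isomorphism corresponds to precomposition on $\Hom_{\Der\mcG}(T,T) = R$ inside $\Hom_\bbZ(R,\bbQ/\bbZ)$. That precisely reproduces the canonical right $R$-module structure on $Q$. I expect no serious obstacle here; the whole argument is a direct unwinding of the definition of $\BC T$ combined with rigidity of $T$.
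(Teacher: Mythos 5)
Your argument is correct, but it takes a different route from the paper. You stay on the $\mcG$-side: you evaluate the defining isomorphism of $\BC T$ at the objects $\Sigma^i T$, use rigidity (T2) together with the vanishing of negative self-extensions to see that $\Hom_{\Der\mcG}(\Sigma^i T,\BC T)=0$ for $i\neq 0$, invoke Lemma~\ref{lem:copy-of-ModR} to conclude that $\RHom_\mcG(T,\BC T)$ is a module, and then identify its zeroth cohomology with $Q$, checking $R$-linearity by naturality of the Brown--Comenetz isomorphism. The paper instead transports the question across the triangle equivalence $\RHom_\mcG(T,-)$, which sends $\BC T$ to the Brown--Comenetz dual of $R$ in $\Der R$, and then verifies $\BC R\cong Q$ by the derived adjunction $\RHom_R(-,Q)\cong\RHom_\bbZ(\RHom_R(R,-),\bbQ/\bbZ)$; this has the advantage that the right $R$-module structure on $Q$ is built into the formula, so no separate linearity check is needed, and rigidity of $T$ is not invoked explicitly. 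Your approach buys a more hands-on computation entirely inside $\Der\mcG$, at the cost of the module-structure verification, which you correctly identified as the delicate point; note only that under the natural isomorphism the right action by $r\in R$ on $\Hom_{\Der\mcG}(T,\BC T)$ (precomposition with $r$) corresponds to the $\bbQ/\bbZ$-dual of \emph{post}composition with $r$ on $\Hom_{\Der\mcG}(T,T)=R$, i.e.\ to the dual of left multiplication, which is exactly the canonical right $R$-structure on $Q$ — so your conclusion stands, with that small correction in the bookkeeping.
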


\begin{proof}
That $Q \in \ModR$ is an injective cogenerator is standard, see~\cite[Lemma 18.5]{AF92}. In order to prove that $\RHom_\mcG(T,\BC T) \cong Q$, it suffices to prove that the Brown-Comenetz dual of $R$ in $\Der R$ is isomorphic to $Q$. This follows by inspecting the degree zero cohomology in the composition of the natural isomorphisms
\begin{multline*}
\RHom_R(-,Q) \cong \RHom_R\big(-,\RHom_\bbZ(R,\bbQ/\bbZ)\big) \cong \\
\cong \RHom_\bbZ(- \Lotimes_R R,\bbQ/\bbZ) \cong \RHom_\bbZ(\RHom_R(R,-),\bbQ/\bbZ).
\qedhere
\end{multline*}
\end{proof}

We shall recall another definition.

\begin{defn} \label{defn:susp}
Let $\mcT$ be a triangulated category with small coproducts. A full subcategory $\mcS \subseteq \mcT$ is called \emph{suspended} if $\mcS$ is closed under taking coproducts, suspensions, and if $X \to Y \to Z \to \Sigma X$ is a triangle with $X,Z \in \mcS$, then $Y \in \mcS$.

Dually, if $\mcT$ is triangulated with small products, $\mcS \subseteq \mcT$ is \emph{cosuspended} if $\mcS$ is closed under products, desuspensions, and if $\Sigma\inv Z \to X \to Y \to Z$ is a triangle with $X,Z \in \mcS$, then $Y \in \mcS$.
\end{defn}

The defining property of $\BC T$ allows us to prove a relation between when cohomologies of $X$ and cohomologies of $\RHom_\mcG(T,X)$ vanish. What we prove among others implies that a quasi-inverse of the functor $\RHom_\mcG(T,-)\dd \DerG \to \Der R$ is way-out right in the sense of~\cite[\S I.7]{Hart66}.

\begin{lem} \label{lem:way-out}
Let $\mcG$ be a Grothendieck category, $T$ be a classical tilting object, and suppose that there exists $n \ge 0$ \st $\Ext^{n+1}_\mcG(T,-) \equiv 0$. Then:
\begin{enumerate}
\item $\BC T$ can be, up to isomorphism in $\DerG$, represented by a complex whose all components are injective and which has non-zero components only in cohomological degrees $(-n)$ to $0$.
\item If $X \in \DerG$ is \st $\Hom_\DerG(\Sigma^i T, X)=0$ for all $i>0$, then the cohomology objects of $X$ satisfy $H^i_\mcG(X) = 0$ for all $i < -n$.
\end{enumerate}
\end{lem}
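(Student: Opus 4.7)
The plan is to treat (1) and (2) separately; (1) rests on Brown--Comenetz duality, while (2) does not.

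For (1), fix a quasi-isomorphism $\BC T \simeq I^\bullet$ with $I^\bullet$ a K-injective complex of injectives (which exists in any Grothendieck category). For each $k \in \bbZ$, set $A := \ker d_I^k$; the inclusion $A \hookrightarrow I^k$ is a $k$-cocycle in the Hom complex $\Hom^\bullet_\mcG(A, I^\bullet)$ that is a coboundary \iff $H^k(\BC T) = 0$. So $H^k(\BC T) \ne 0$ would force a nonzero class in $\Hom_\DerG(A, \Sigma^k \BC T)$. On the other hand, substituting $X = \Sigma^{-k} A$ in the defining isomorphism of Definition~\ref{defn:BC-dual} gives
\[
\Hom_\DerG(A, \Sigma^k \BC T) \;\cong\; \Hom_\bbZ\bigl(\Ext^{-k}_\mcG(T,A),\, \bbQ/\bbZ\bigr),
\]
which vanishes for $k > 0$ (negative Ext is zero) and for $k < -n$ (by the assumption on $T$). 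Hence the cohomology of $\BC T$ is concentrated in $[-n, 0]$. For the bounded injective representative, choose $I^\bullet$ so that $I^k = 0$ for $k < -n$ and set $Z := \ker(d_I^0)$. Since $H^k(\BC T) = 0$ for $k > 0$, the tail $I^0 \to I^1 \to \cdots$ is an injective resolution of $Z$; using the brutal truncation triangle $I^{<0} \to I^\bullet \to I^{\ge 0}$ and the fact that $\Hom^\bullet_\mcG(M, I^{<0})$ sits in cohomological degrees $[-n,-1]$, a short chase in the long exact sequence shows $\Ext^i_\mcG(M,Z) \cong \Ext^i_\mcG(M, \BC T)$ for every $i > 0$ and every $M \in \mcG$. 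The right-hand side vanishes by the Brown--Comenetz identity above, so $Z$ is injective, and $I^{-n} \to \cdots \to I^{-1} \to Z$ is the desired representative.

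For (2), Proposition~\ref{prop:tilt-char} rewrites the hypothesis as $\RHom_\mcG(T,X) \in \Dtge{0}$ in $\Der R$. I would then apply the hypercohomology spectral sequence
\[
E_2^{p,q} = \Ext^p_\mcG(T, H^q(Y)) \;\Longrightarrow\; H^{p+q}\bigl(\RHom_\mcG(T,Y)\bigr)
\]
to an arbitrary $Y \in \Dtle{-n-1}$ in $\DerG$. The hypothesis $\Ext^{n+1}_\mcG(T,-) \equiv 0$ kills $E_2$ for $p > n$, and $Y \in \Dtle{-n-1}$ kills it for $q > -n-1$, so the nonzero support satisfies $p + q \le -1$; only finitely many terms contribute to each total degree, so the spectral sequence converges and yields $\RHom_\mcG(T,Y) \in \Dtle{-1}$ in $\Der R$. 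Since $\RHom_\mcG(T,-)$ is a triangle equivalence, the standard $t$-structure orthogonality $\Hom_{\Der R}(\Dtle{-1}, \Dtge{0}) = 0$ transports to $\Hom_\DerG(Y, X) = 0$ for every $Y \in \Dtle{-n-1}$, forcing $X \in (\Dtle{-n-1})^\perp = \Dtge{-n}$.

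The step I expect to be trickiest is upgrading the cohomological bound in (1) to the injectivity of $Z$ itself rather than merely to bounded injective dimension; the other ingredients are bookkeeping with Brown--Comenetz duality, truncations, $t$-structure orthogonality, and a convergent spectral sequence.
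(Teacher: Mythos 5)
Your part (1) is correct and close in spirit to the paper's own argument: the paper likewise bounds $\BC T$ via the duality formula (after the same dimension-shifting step, which you leave implicit, showing $\Ext^j_\mcG(T,-)\equiv 0$ for all $j>n$) and then trims a complex of injectives; where the paper splits off the superfluous degrees by exhibiting explicit contracting homotopies, you instead prove that the cocycle object $Z=\ker d_I^0$ satisfies $\Ext^{i}_\mcG(M,Z)=0$ for all $i>0$ and $M\in\mcG$, hence is injective. That works (note the brutal truncation triangle should be oriented $I^{\ge 0}\to I^\bullet\to I^{<0}$, but the long exact sequence argument is unaffected).

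The gap is in (2), at the step ``only finitely many terms contribute to each total degree, so the spectral sequence converges.'' Finite support of $E_2$ in the $p$-direction guarantees that each $E^{p,q}$ stabilizes at a finite page, but it does not by itself produce a spectral sequence strongly convergent to $H^{p+q}\bigl(\RHom_\mcG(T,Y)\bigr)$ for $Y$ unbounded below, and that is exactly where all of the content of (2) sits: for cohomologically \emph{bounded} $Y\in\Dtle{-1}$ one gets $\Hom_\DerG(T,\Sigma^m Y)=0$ for $m\ge 0$ with no hypothesis on $n$ at all, so your intermediate claim ``$Y\in\Dtle{-n-1}\Rightarrow \RHom_\mcG(T,Y)\in\Dtle{-1}$'' is equivalent to the lemma itself, with the whole difficulty concentrated in the unbounded case that the spectral-sequence citation glosses over. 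The standard constructions (truncation towers, Cartan--Eilenberg resolutions) yield for such $Y$ only a conditionally convergent spectral sequence, and identifying its abutment with $H^\ast\RHom_\mcG(T,Y)$ requires a homotopy-limit/left-completeness statement in $\DerG$; since $\mcG$ is an arbitrary Grothendieck category, products need not be exact (cf.\ Remark~\ref{rem:pdim-of-T}) and this is not free. The paper circumvents this by working with cosuspended subcategories: it transports the problem to $\Der R$, shows that $\{X \mid \Hom_\DerG(\Sigma^i T,X)=0 \textrm{ for } i>0\}$ is the smallest cosuspended subcategory containing $\BC T$ (using homotopy limits of brutal truncations in $\Der R$, where products are exact), and then invokes~(1). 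Your own part (1) also repairs the gap without any spectral sequence: represent $Y\in\Dtle{-n-1}$ by a complex with zero components in degrees $>-n-1$ (canonical truncation) and $\BC T$ by the bounded complex of injectives $K$ from (1); since $K$ is a bounded complex of injectives, $\Hom_\DerG(Y,\BC T)\cong\Hom_\HtpG(Y,K)=0$ for degree reasons, so the Brown--Comenetz formula gives $\Hom_\bbZ\bigl(\Hom_\DerG(T,Y),\bbQ/\bbZ\bigr)=0$ and hence $\Hom_\DerG(T,Y)=0$; applying this to $\Sigma^m Y$ for $m\ge 0$ yields your claim, after which your orthogonality argument goes through.
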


\begin{proof}
(1) Using dimension shifting in the second argument of Ext, one easily shows that $\Ext^j_\mcG(T,-) \equiv 0$ for all $j > n$. This and the definition of the Brown-Comenetz dual implies that
\[ \Hom_\DerG\big(\Sigma^i Z, \BC T\big) = 0 \quad \textrm{for all $Z \in \mcG$ and $i < 0$ or $i > n$.}  \]
Now using either~\cite[Theorem 5.4]{AJS00} or the injective model structure discussed later in Example~\ref{expl:inj-model}, we can replace $\BC T$ by an isomorphic object $K' \in \DerG$ \st all components of $K'$ are injective in $\mcG$ and
\[ \Hom_\DerG\big(Z, \BC T\big) \cong \Hom_\HtpG(Z, K') \quad \textrm{for all $Z \in \DerG$.}  \]
Let us denote by $Z^i(K')$ the $i$-th cocycle object of $K'$. By combining the above, we observe that the canonical cochain complex maps $\Sigma^{-i}Z^i(K') \to K'$ vanish in $\HtpG$ for $i<-n$ and for $i>0$. In other words, for each such $i$ we have a map $s^i\dd Z^i(K') \to (K')^{i-1}$ such that $\dif^{i-1}_{K'} \circ s^i$ is equal to the inclusion $Z^i(K') \to (K')^i$. In particular, $H^i(K') = 0$ and the epimorphism $(K')^{i-1} \to \Img \dif^{i-1}_{K'} = Z^i(K')$ splits. Let us denote by $K \in \CpxG$ the complex concentrated in cohomological degrees $(-n)$ to $0$ given by
\[
\cdots \to 0 \to (K')^{-n}/\Img\dif^{-n-1}_{K'} \to (K')^{-n+1} \to \cdots \to (K')^{-1} \to Z^0(K') \to 0 \to \cdots
\]
A simple computation reveals that $K'$ is isomorphic in $\HtpG$ to $K$ and $K$ has injective components. Indeed, let us denote by $K''$ the subcomplex of $K'$ given by
\[
(K'')^i = \begin{cases}
(K')^i  & i < 0, \\
Z^0(K') & i = 0, \\
0       & i > 0.
\end{cases}
\]
That is, $K''$ is a truncation of $K'$ to non-positive cohomological degrees. By the above computation we have $(K')^0 = \Img s^1 \oplus (K'')^0$ and so we have a component wise split short exact sequence $0 \to K'' \to K' \to K'/K'' \to 0$ in $\Cpx{R}$ with $K'/K''$ acyclic. In particular, $K''$ has injective components and is quasi-isomorphic to $K'$. Similarly we can express $K$ as a component wise split quasi-isomorphic factor of $K''$. Thus, $K$ is isomorphic to $\BC T$ in $\Der{R}$ and has the required properties.

(2) Note that both $\mcS_1 = \{X \in \DerG \mid \Hom_\DerG(\Sigma^i T, X)=0 \textrm{ for all } i>0 \}$ and $\mcS_2 = \{X \in \DerG \mid H^i_\mcG(X) = 0 \textrm{ for all } i < -n \}$ are cosuspended subcategories of $\DerG$. For $\mcS_2$ this follows from the fact that every object of $\mcS_2$ is isomorphic in $\DerG$ to a complex with all components injective and non-zero components only in cohomological degrees $\ge (-n)$. Our task is to prove that $\mcS_1 \subseteq \mcS_2$.

Using the same argument as for Lemma~\ref{lem:copy-of-ModR}, the essential image of $\mcS_1$ in $\Der R$ is the cosuspended subcategory
\[ \mcU = \{Y \in \Der R \mid H^i_R(Y) = 0 \textrm{ for all } i < 0 \}. \]
We claim that $\mcU$ is the smallest suspended subcategory of $\Der R$ containing $Q = \Hom_\bbZ(R,\bbQ/\bbZ)$. Indeed, $\mcU$ clearly contains all bounded complexes of injective modules concentrated in cohomological degrees $\ge 0$. If $Y \in \mcU$ is arbitrary, we can without loss of generality assume that $Y$ is a complex of injective modules concentrated in cohomological degrees $\ge 0$. Then $Y$ is a homotopy limit in the sense of~\cite[Remark 2.3]{BN93} of its bounded brutal truncations. This proves the claim.

Passing back through the triangle equivalence and using Corollary~\ref{cor:inj-cogen-in-D(G)}, we learn that $\mcS_1$ is the smallest cosuspended subcategory of $\DerG$ containing $\BC T$. As $\mcS_2$ contains $\BC T$ by (1), we have $\mcS_1 \subseteq \mcS_2$.
\end{proof}

Now we can finish the proof of the theorem.

\begin{proof}[Proof of Theorem~\ref{thm:cotilt-from-tilt}]
Suppose that $T \in \mcG$ is a classical tilting object and $W \in \mcG$ is an injective cogenerator. Then $\RHom_\mcG(T,W)$ is isomorphic to a module by Lemma~\ref{lem:copy-of-ModR}. We denote this module by $C$.

Suppose now that we have a non-negative integer $n$ \st $\Ext^{n+1}_\mcG(T,-) \equiv 0$. Thus, whenever $X \in \DerG$ \st $Y \cong \RHom_\mcG(T,X)$ for a module $Y \in \ModR$, it follows from Lemmas~\ref{lem:copy-of-ModR} and~\ref{lem:way-out}(2) that
\[ \Ext^{n+1}_R(Y,C) \cong \Hom_{\Der R}(Y,\Sigma^{n+1} C) \cong \Hom_\DerG(X,\Sigma^{n+1} W) = 0. \]
In particular $\Ext_R^{n+1}(-,C) \equiv 0$ in $\ModR$ and (BC1) of Definition~\ref{defn:big-cotilt} holds for $C$.

Regarding (BC2), note that given any set $I$ and the product $W^I$ in $\mcG$, then $W^I$ is injective in $\mcG$ and
\[ \Hom_\DerG(X,W^I) \cong \Hom_\HtpG(X,W^I) \quad \textrm{for all $X \in \DerG$}. \]

Therefore, although the canonical inclusion $\mcG \to \DerG$ does not preserve products in general, $W^I$ \emph{is} the corresponding product of copies of $W$ in $\DerG$. It follows that
\[ \Ext^i_R(C^I,C) \cong \Hom_{\Der R}(\Sigma^{-i} C^I,C) \cong \Hom_\DerG(\Sigma^{-i}W^I,W) \cong \Ext^i_\mcG(W^I,W) = 0 \]
for all $i>0$, as required.

Finally, let $K$ be a complex isomorphic to $\BC T$ in $\Der\mcG$ as in Lemma~\ref{lem:way-out}(1). Hence we have an isomorphism $Q \cong \RHom_\mcG(T,K)$ in $\Der R$, where $Q$ is the injective cogenerator of $\ModR$ as in Corollary~\ref{cor:inj-cogen-in-D(G)}. Moreover, $\RHom_\mcG(T,K)$ can be taken of the form
\[ \cdots \la 0 \la 0 \la C^{-n} \la \cdots \la C^{-1} \la C^0 \la 0 \la 0 \la \cdots \]
where each $C^i$ is in cohomological degree $i$ and belongs to $\Prod C$. As this complex has a non-zero cohomology only in degree zero and the cohomology $R$-module is isomorphic to $Q$, (BC3) follows.
\end{proof}

The aim of the rest of the text is to prove that \emph{every} big cotilting module arises as in Theorem~\ref{thm:cotilt-from-tilt}. For big $1$-cotilting modules this was essentially proved by Colpi, Gregorio and Mantese in~\cite{CGM07}. If $n > 1$, we need more powerful tools in order to construct a corresponding Grothendieck category with a classical tilting object.

\section{Resolving subcategories and resolution dimension}
\label{sec:resolving}

In this section we generalize some results of Auslander and Bridger~\cite{AuBr69}. We will be concerned with the following concept.

\begin{defn} \label{defn:resolving}
Let $\mcA$ be an abelian category and $\mcX \subseteq \mcA$ be a full subcategory. Then $\mcX$ is a \emph{resolving subcategory} if it satisfies the following.
\begin{enumerate}
\item[(R0)] $\mcX$ is closed under retracts.
\item[(R1)] Suppose that $0 \to X \to Y \to Z \to 0$ is a short exact sequence and $Z \in \mcX$. Then $X \in \mcX$ \iff $Y \in \mcX$.
\item[(R2)] $\mcX$ is generating. That is, for any $A \in \mcA$ there exists an epimorphism $X \to A$ with $X \in \mcX$. 
\end{enumerate}

A resolving subcategory is called \emph{functorially resolving} provided that the epimorphisms in (R2) can be taken functorially in $A$.

The \emph{resolution dimension} of an object $A \in \mcA$ with respect to $\mcX$, denoted $\resdim_\mcX A$, is defined as the smallest integer $n\ge0$ \st there exists an exact sequence
\[ 0 \la X_n \la \cdots \la X_1 \la X_0 \la A \la 0 \]
with $X_0,X_1,\dots,X_n \in \mcX$, or $\resdim_\mcX A = \infty$ if such $n$ does not exist.

\emph{Coresolving} and \emph{functorially coresolving} subcategories and \emph{coresolution dimension} are defined dually.
\end{defn}

In Grothendieck categories, the functorial property of resolving subcategories comes often for free.

\begin{lem} \label{lem:functor-for-free}
Let $\mcG$ be a Grothendieck category. If $\mcX$ is a resolving subcategory closed under small coproducts, then $\mcX$ is functorially resolving. Dually, if $\mcX$ is coresolving closed under products, then $\mcX$ is functorially coresolving.
\end{lem}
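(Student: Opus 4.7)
The plan is to locate, in each case, a single object of $\mcX$ that serves as a generator (resp.\ cogenerator) of $\mcG$, and then to build the required approximation functor out of coproducts (resp.\ products) of copies of it.

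For the resolving case, I would start from any generator $G$ of $\mcG$, which exists because $\mcG$ is Grothendieck. Axiom (R2) applied to $G$ yields an epimorphism $p\dd X_0 \to G$ with $X_0 \in \mcX$, and a short diagram chase shows that $X_0$ is itself a generator: given a nonzero $f\dd A\to B$, pick $g\dd G\to A$ with $fg\ne0$; then $fgp\ne0$ because $p$ is epi (any $h\dd G\to B$ with $hp=0$ must vanish). I would then define
\[
F(A) \;=\; X_0^{(\Hom_\mcG(X_0,A))}, \qquad \ep_A\dd F(A)\la A,
\]
where the component of $\ep_A$ at an index $\alpha\dd X_0\to A$ is $\alpha$ itself. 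This is visibly functorial in $A$ (a morphism $A\to B$ acts on indexing sets by post-composition), $F(A)\in\mcX$ by the hypothesis that $\mcX$ is closed under small coproducts, and the epimorphicity of $\ep_A$ is exactly the statement that $X_0$ generates $\mcG$.

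The coresolving assertion is strictly dual. Any Grothendieck category admits an injective cogenerator $W$, and the dual of (R2) — that $\mcX$ is cogenerating — yields a monomorphism $W\to X^0$ with $X^0\in\mcX$; the same little argument in $\mcG\op$ shows $X^0$ is itself a cogenerator. Taking
\[
F(A) \;=\; (X^0)^{\Hom_\mcG(A,X^0)}, \qquad \eta_A\dd A\la F(A),
\]
with $\eta_A$ the canonical unit, gives the desired functorial monomorphism; membership $F(A)\in\mcX$ follows from closure under products, and monicity of $\eta_A$ follows from $X^0$ being a cogenerator.

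I do not expect any real obstacle here. The only step that uses the hypotheses in a nontrivial way is the replacement of $G$ (resp.\ $W$) by an object of $\mcX$ that still generates (resp.\ cogenerates), which is handled by a single application of (R2) combined with the observation that the domain of an epimorphism onto a generator is again a generator. Everything else — the functoriality of $F$, the naturality of $\ep$ and $\eta$, and the fact that $F(A)\in\mcX$ — is purely formal and relies solely on the closure assumption.
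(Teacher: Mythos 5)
Your proposal is correct and follows essentially the same route as the paper: locate a generator (resp.\ cogenerator) of $\mcG$ inside $\mcX$ and then use the canonical evaluation epimorphism $X_0^{(\Hom_\mcG(X_0,A))}\to A$ (resp.\ the unit monomorphism $A\to (X^0)^{\Hom_\mcG(A,X^0)}$), which lands in $\mcX$ by the closure hypothesis and is visibly functorial. The only cosmetic difference is in the coresolving case: the paper notes that the injective cogenerator $W$ itself lies in $\mcX$ (the monomorphism $W\to X^0$ splits by injectivity and $\mcX$ is closed under retracts), whereas you instead transfer the cogenerator property to $X^0$; both are fine.
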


\begin{proof}
A resolving subcategory in $\mcG$ necessarily contains a generator $G$ for $\mcG$. Then we can choose for every $A \in \mcG$ the canonical epimorphism
\[ G^{(\Hom_\mcG(G,A))} \la A \]
and it is an easy observation that this choice can be made functorial in $A$. Similarly, every coresolving subcategory contains an injective cogenerator.
\end{proof}

The main observation of the section is that the resolution dimension \wrt $\mcX$ shares some nice properties with usual projective dimension. In particular, the resolution dimension of an object can be computed using \emph{any} resolution by objects of $\mcX$. 

\begin{prop} \label{prop:res-dim}
Let $\mcA$ be an abelian category, $\mcX \subseteq \mcA$ a resolving subcategory, and $n \ge 0$ an integer. Then the following hold:
\begin{enumerate}
\item Suppose that $A \in \mcA$ is an object and that we have exact sequences
\[ 0 \la K_n \la X_{n-1} \la \cdots \la X_1 \la X_0 \mapr{p} A \la 0 \]
and
\[ 0 \la K'_n \la X'_{n-1} \la \cdots \la X'_1 \la X'_0 \mapr{p'} A \la 0 \]
\st $X_0, \dots, X_{n-1}, X'_0, \dots, X'_{n-1} \in \mcX$. Then $K_n \in \mcX$ \iff $K'_n \in \mcX$.

\item The class $\mcX_n$ of all objects of $\mcA$ of resolution dimension $\le n$ is a resolving subcategory.
\end{enumerate}

The dual statements hold for coresolving subcategories and coresolution dimension.
\end{prop}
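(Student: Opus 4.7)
The plan is to prove (1) by induction on $n$ using a Schanuel-type pullback argument, and then derive (2) from (1) by directly verifying the three resolving axioms.

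For (1), first note that applying (R1) to the split exact sequence $0 \to X \to X \oplus Y \to Y \to 0$ shows that $\mcX$ is closed under finite direct sums, a fact used throughout. The base case $n = 1$ is the classical Schanuel construction: form the pullback $P = X_0 \times_A X_0'$ of $p$ and $p'$, which fits into two short exact sequences $0 \to K_1 \to P \to X_0' \to 0$ and $0 \to K_1' \to P \to X_0 \to 0$; applying (R1) twice yields the chain $K_1 \in \mcX \iff P \in \mcX \iff K_1' \in \mcX$. For the inductive step, set $M = \ker p$ and $M' = \ker p'$ so that truncating the given resolutions provides length-$(n-1)$ $\mcX$-resolutions of $M$ and $M'$. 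Form the same pullback $P$, yielding $0 \to M \to P \to X_0' \to 0$ and $0 \to M' \to P \to X_0 \to 0$. Pick any $\mcX$-epimorphism $\phi\dd Z \to P$; composing with the two projections produces $\mcX$-covers of $X_0'$ and $X_0$ whose kernels, via the snake lemma, fit into short exact sequences with $M$ (respectively $M'$) as quotient. Splicing these with the truncated resolutions yields two $\mcX$-resolutions of length $n$ of $X_0'$ and $X_0$, whose $n$-th syzygies differ from $K_n$ and $K_n'$ only by direct summands already in $\mcX$; comparing to the trivial length-$n$ resolution of $X_0' \in \mcX$ (and of $X_0$) via the inductive hypothesis then transfers $\mcX$-membership between $K_n$ and $K_n'$.

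For (2), the generating property (R2) is immediate because $\mcX \subseteq \mcX_n$ is already generating. For closure under retracts (R0), write $A = A_1 \oplus A_2 \in \mcX_n$, pick any $\mcX$-resolutions of $A_1$ and $A_2$, and take their componentwise direct sum, which resolves $A$. Part (1) forces the $n$-th syzygy of this direct-sum resolution to lie in $\mcX$, but this $n$-th syzygy equals the direct sum of the individual $n$-th syzygies, so closure of $\mcX$ under retracts places each in $\mcX$, giving $A_1, A_2 \in \mcX_n$. For the two-out-of-three property (R1) in $\mcX_n$, given $0 \to A \to B \to C \to 0$, choose $\mcX$-epimorphisms onto each of $A$, $B$, $C$ and relate the three kernels by the snake lemma; part (1) translates the hypothesis on which two of $A, B, C$ are in $\mcX_n$ into the corresponding kernels lying in $\mcX_{n-1}$, and an induction on $n$ that recycles the pullback trick of part (1) passes the bound to the third term.

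The main obstacle is the inductive step of (1): unlike the projective case, the Schanuel pullback $P$ generally does not lie in $\mcX$, so the naive horseshoe assembly of resolutions fails. The workaround is to replace the direct-sum horseshoe by the pullback $P$ together with an auxiliary $\mcX$-cover of $P$, using part (1) itself as the bookkeeping device for transferring $\mcX$-membership across the several resolutions that arise. Once this scheme is set up, the remainder reduces to routine diagram chasing, and the dual statements for coresolving subcategories and coresolution dimension follow by applying the same argument in the opposite category $\mcA\op$.
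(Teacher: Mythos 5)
The base case and the pullback you start from are fine, but the inductive step of (1) has a genuine gap at the ``splicing'' stage. Writing $W = \Ker\phi$, the kernel $N'$ of the composite $Z \to P \to X_0'$ sits in an exact sequence $0 \to W \to N' \to M \to 0$; since $W$ is an arbitrary subobject of $Z$ about which nothing is known and this extension need not split, the truncated resolution of $M$ cannot be spliced onto $0 \to N' \to Z \to X_0' \to 0$: no length-$n$ $\mcX$-resolution of $X_0'$ is actually produced, and in particular none ``whose $n$-th syzygy differs from $K_n$ only by direct summands in $\mcX$.'' Worse, the claimed output cannot exist in general: for a resolution of $X_0'$ with all terms in $\mcX$ and $X_0' \in \mcX$, repeated application of (R1) shows that \emph{every} syzygy of that resolution lies in $\mcX$, so your claim would force $K_n \in \mcX$ unconditionally --- false already for $\mcX$ the projective modules and $A$ of projective dimension greater than $n$. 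Note also that comparing two length-$n$ resolutions of $X_0'$ ``via the inductive hypothesis'' appeals to statement (1) at level $n$, not $n-1$, so even the bookkeeping is circular as written.

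The paper avoids this by proving (1) and (2) \emph{jointly} by induction on $n$: in the step for (1) it takes the same pullback $Z = X_0 \times_A X_0'$ but tracks membership in $\mcX_{n-1}$ rather than in $\mcX$. From $K_n \in \mcX$ one gets $\Ker p \in \mcX_{n-1}$; since $\mcX_{n-1}$ is resolving by the inductive hypothesis for (2), the column $0 \to \Ker p \to Z \to X_0' \to 0$ gives $Z \in \mcX_{n-1}$, the row $0 \to \Ker p' \to Z \to X_0 \to 0$ gives $\Ker p' \in \mcX_{n-1}$, and then (1) at level $n-1$ applied to $\Ker p'$ yields $K_n' \in \mcX$. (A repair closer to your setup: by (R1), the kernels $N$ and $N'$ of $Z \to X_0$ and $Z \to X_0'$ already lie in $\mcX$, and they share the first syzygy $W$; extending $0 \to W \to N' \to M \to 0$ and $0 \to W \to N \to M' \to 0$ by one common partial $\mcX$-resolution of $W$ and applying (1) at level $n-1$ to $M$ and to $M'$ transfers membership from $K_n$ to $K_n'$ through $W$.) Finally, in (2) your choice of ``$\mcX$-epimorphisms onto each of $A$, $B$, $C$'' made independently does not give a morphism of short exact sequences, so the snake lemma does not apply; one must, as in the paper, choose epimorphisms $X_1 \to A$ and $X_2 \to B$ and use $X_1 \oplus X_2 \to B$ with the induced epimorphism onto $C$, and the resulting argument again relies on the joint induction with (1).
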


\begin{proof}
The first part was proved in~\cite[Lemma 3.12]{AuBr69} for abelian categories with enough projectives. In general we will prove (1) and (2) jointly by induction on $n$.

If $n=0$, then both (1) and (2) are clear. Suppose that $n > 0$. Regarding (1), it clearly suffices to prove that $K'_n \in \mcX$ provided that $K_n \in \mcX$. Hence suppose $K_n \in \mcX$ and consider the pullback diagram
\[
\begin{CD}
   @.             @.      0     @.      0             \\
@.          @.          @VVV          @VVV            \\
   @.             @.   \Ker p   @=    \Ker p          \\
@.          @.          @VVV          @VVV            \\
0  @>>>  \Ker p'  @>>>    Z     @>>>   X_0  @>>>  0   \\
@.          @|          @VVV         @VV{p}V          \\
0  @>>>  \Ker p'  @>>>  X'_0  @>{p'}>>  A   @>>>  0   \\
@.          @.          @VVV          @VVV            \\
   @.             @.      0     @.      0
\end{CD}
\]
We have $X_0 \in \mcX$ and $\Ker p \in \mcX_{n-1}$. The inductive hypothesis on $\mcX_{n-1}$ implies that $Z \in \mcX_{n-1}$ and in turn $\Ker p' \in \mcX_{n-1}$. Applying (1) for $n-1$ gives us $K'_n \in \mcX$, as required.

For (2) suppose that we have a short exact sequence $0 \to A \overset{i}\to B \overset{p}\to C \to 0$ in $\mcA$. Let us fix epimorphisms $f\dd X_1 \to A$ and $g\dd X_2 \to B$ \st $X_1, X_2 \in \mcX$. The we can construct the diagram with exact rows and columns:
\[
\begin{CD}
    @.     0              @.                0           @.            0              \\
@.       @VVV                             @VVV                      @VVV             \\
0   @>>>   K              @>>>              L           @>>>          M   @>>>   0   \\
@.       @VVV                             @VVV                      @VVV             \\
0   @>>>  X_1         @>{(^1_0)}>>  X_1 \oplus X_2  @>{(0\;1)}>>     X_2  @>>>   0   \\
@.     @V{f}VV                       @V{(if\;g)}VV                @V{pg}VV           \\
0   @>>>   A              @>{i}>>           B           @>{p}>>       C   @>>>   0   \\
@.       @VVV                             @VVV                      @VVV             \\
    @.     0              @.          0                 @.            0
\end{CD}
\]
If $A,C \in \mcX_n$, then $K,M \in \mcX_{n-1}$. Using the inductive hypothesis, we have $L \in \mcX_{n-1}$ and so $B \in \mcX_n$. Similarly if $B,C \in \mcX_n$, then $A \in \mcX_n$. This proves Definition~\ref{defn:resolving}(R1) for $\mcX_n$. Moreover, if $p$ splits, then the diagram can be constructed so that the upper row also splits. In that case if $B \in \mcX_n$, then $L \in \mcX_{n-1}$, so $K,M \in \mcX_{n-1}$ and $A,C \in \mcX_n$. Thus, (R0) holds. Finally, the inclusion $\mcX \subseteq \mcX_n$ implies (R2) and $\mcX_n$ is resolving.
\end{proof}

\section{Cotilting model structures}
\label{sec:models}

This section is devoted to a description of suitable model structures (in the sense of \cite{Hir03,Hov99}) on the category $\Cpx\mcG$ for a Grothendieck category $\mcG$ \st the homotopy category is $\Der\mcG$. We will also exhibit a Quillen equivalence between $\Cpx R$ and $\Cpx\mcG$ if $\mcG$ is a Grothendieck category with a classical tilting object and $R$ is its endomorphism ring.

\subsection{Models for the derived category}
\label{subsec:model-derived}

The derived category of $\mcG$ is usually defined as the category which we obtain from $\CpxG$ by formally inverting all quasi-isomorphisms. This process of inverting morphisms fits into a well understood framework of Quillen model categories~\cite{Hir03,Hov99,QHtp67} from homotopy theory. Although in principle one can always formally construct $\mcA[\we\inv]$ for a category $\mcA$ with a distinguished class $\we$ of morphisms (see~\cite{GZ67}), it is not so easy to compute in $\mcA[\we\inv]$ and also a set-theoretic difficulty may occur. Namely, the collections of morphisms $\mcA[\we\inv](X,Y)$ may not be small in general and we may be forced to pass to a higher universe to make $\mcA[\we\inv]$ a well defined category. If $(\mcA,\we)$ is a part of a model structure, this problem does not occur and the description of $\mcA[\we\inv]$ as a subquotient of $\mcA$ (see~\cite[Theorem 1.2.10]{Hov99} or~\cite[\S8.4]{Hir03}) is much more accessible to computation.

Recall that a \emph{model structure} on $\mcA$ consists of the class $\we$ of \emph{weak equivalences}, i.e.~the morphisms which we wish to make invertible, and of two additional classes of morphisms $\cof,\fib$, called \emph{cofibrations} and \emph{fibrations}, respectively. These three classes are subject to well-known axioms for which we refer to~\cite{Hir03,Hov99}. A \emph{model category} is a complete and cocomplete category equipped with a model structure $(\cof,\we,\fib)$, and in this context $\mcA[\we\inv]$ is traditionally called the \emph{homotopy category} of $\mcA$.

As we are mostly interested in $\DerG$ for a Grothendieck category $\mcG$, we shall start with $\mcA = \CpxG$ and $\we$ the class of all quasi-isomorphisms. The question now stands as how to construct the additional classes $\cof,\fib$, preferably in a way which later allows for efficient computations in $\Der R$. Here we use the fact that $\CpxG$ is not just an arbitrary category. It is an abelian category, and even a Grothendieck category (see for instance~\cite[Lemma 1.1]{St13-deconstr}). It is very helpful to make the model structure respect the abelian structure the sense defined by Hovey~\cite{Hov02}:

An \emph{abelian model structure} on an abelian category $\mcA$ is a model structure \st cofibrations are precisely monomorphisms with cofibrant cokernels and fibrations are precisely epimorphisms with fibrant kernels. Of course, an \emph{abelian model category} is a complete and cocomplete abelian category together with an abelian model structure.

In fact, Hovey~\cite{Hov02} did much more than just giving a suitable compatibility condition on model and abelian structures. He showed that in this situation, the model structure is fully given by classes of objects rather than morphisms, and the definition of model structure translates to the existence certain cotorsion pairs. Methods for obtaining such model structures on $\Cpx R$ were initially studied by Gillespie~\cite{Gil04} and since then several articles treating the topic have been published. We refer to~\cite{St13-ICRA} for a detailed treatment and references. Here we recall only the necessary minimum: the concept of a cotorsion pair (originally due to Salce~\cite{Sal79}), Hovey's translation and a generalized version of Gillespie's result.

\begin{defn} \label{defn:cotorsion}
Let $\mcA$ be an abelian category. A \emph{complete cotorsion pair} is a pair of classes of objects $(\mcC,\mcF)$ \st
\begin{enumerate}
\item[(CP0)] $\mcC$ and $\mcF$ are closed under retracts.
\item[(CP1)] $\Ext^1_\mcA(C,F) = 0$ for all $C \in \mcC$ and $F \in \mcF$.
\item[(CP2)] For every object $X \in \mcA$ there exist (non-unique) short exact sequences
\[
0 \la X \la F_X \la C_X \la 0 \quad \textrm{ and } \quad
0 \la F^X \la C^X \la X \la 0
\]
with $C_X, C^X \in \mcC$ and $F_X, F^X \in \mcF$.
\end{enumerate}

The sequences as in (CP2) are called \emph{approximation sequences}. The complete cotorsion pair is called \emph{functorially complete} if the approximation short exact sequences can be chosen functorially in $X$.

The complete cotorsion pair is called \emph{hereditary} if $\Ext^i_\mcA(C,F) = 0$ for all $C \in \mcC$, $F \in \mcF$ and $i > 0$.
\end{defn}

For future reference, we shall also explain the motivation for the term approximation sequence. The idea is that we approximate a general object $X \in \mcA$ by an object from a specified full subcategory of $\mcA$. We have two versions of these approximations---precovers (right approximations) and preenvelopes (left approximations); see~\cite{ARS97,GT06}.

\begin{defn} \label{defn:approx}
Let $\mcA$ be a category and $\mcC \subseteq \mcA$ be a full subcategory. A morphism $f\dd C^X \to X$ is a \emph{$\mcC$-precover} if $C^X \in \mcC$ and any other morphism $f'\dd C' \to X$ with $C' \in \mcC$ factors through $f$. Dually, $g\dd X \to C_X$ is a \emph{$\mcC$-preenvelope} if $C_X \in \mcC$ and any other morphism $g'\dd X \to C''$ with $C'' \in \mcC$ factors through $g$.
\end{defn}

\begin{lem} \label{lem:approx}
Let $(\mcC,\mcF)$ be a complete cotorsion pair in an abelian category $\mcA$. Given the approximation sequences for $X \in \mcA$ as in Definition~\ref{defn:cotorsion}, then $C^X \to X$ is a $\mcC$-precover and $X \to F_X$ is an $\mcF$-preenvelope.
\end{lem}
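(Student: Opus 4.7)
The plan is to extract both halves of the lemma from the single vanishing $\Ext^1_\mcA(\mcC,\mcF)=0$ built into the cotorsion pair, by applying an appropriate $\Hom$-functor to each of the two approximation sequences.

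For the precover statement, I take the short exact sequence
\[ 0 \la F^X \la C^X \la X \la 0 \]
given in (CP2) and, for an arbitrary $C' \in \mcC$, apply $\Hom_\mcA(C',-)$. The resulting long exact sequence contains the segment
\[ \Hom_\mcA(C',C^X) \la \Hom_\mcA(C',X) \la \Ext^1_\mcA(C',F^X), \]
and by (CP1) the rightmost term vanishes because $C'\in\mcC$ and $F^X\in\mcF$. Hence every $f'\dd C'\to X$ lifts along $C^X\to X$, which is exactly the defining property of a $\mcC$-precover in Definition~\ref{defn:approx}.

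For the preenvelope statement I dualize the argument. Starting from
\[ 0 \la X \la F_X \la C_X \la 0, \]
I apply $\Hom_\mcA(-,F')$ for an arbitrary $F' \in \mcF$, producing the exact segment
\[ \Hom_\mcA(F_X,F') \la \Hom_\mcA(X,F') \la \Ext^1_\mcA(C_X,F'), \]
and again (CP1) forces the $\Ext^1$ term to vanish, so every $g'\dd X\to F'$ factors through $X\to F_X$, giving the $\mcF$-preenvelope property.

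There is essentially no obstacle here: the two assertions are immediate from the defining Ext-vanishing of a cotorsion pair applied to the two approximation sequences. The only thing to be slightly careful about is that the two sequences play dual but asymmetric roles, one contributing the object $C^X$ on the source side of the precover and the other contributing $F_X$ on the target side of the preenvelope; once the right sequence is paired with the right $\Hom$-functor, the long exact sequence does all the work.
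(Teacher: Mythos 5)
Your argument is correct and is exactly the paper's proof: apply $\Hom_\mcA(C',-)$ to $0 \to F^X \to C^X \to X \to 0$ and use $\Ext^1_\mcA(C',F^X)=0$ from (CP1), with the preenvelope case handled dually. You simply spell out the long exact sequence segments that the paper leaves implicit.
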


\begin{proof}
Given $C' \in \mcC$, then the homomorphism $\Hom_\mcA(C',C^X) \to \Hom_\mcA(C',X)$ is an epimorphism since $\Ext^1_\mcA(C',F^X) = 0$. The other case is dual.
\end{proof}

Now we are in a position to state the crucial result due to Hovey relating abelian model structures and cotorsion pairs.

\begin{prop}[{\cite[Theorem 2.2]{Hov02}}] \label{prop:abel-model}
Let $\mcA$ be an abelian model category and $\Cof,\We,\Fib$ be the classes of cofibrant, trivial and fibrant objects, respectively. Then the following hold:
\begin{enumerate}
\item The class $\We$ is closed under retracts. Moreover, if $0 \to W_1 \to W_2 \to W_3 \to 0$ is a short exact sequence and two of $W_1,W_2,W_3$ belong to $\We$, so the third.
\item The pairs $(\Cof\cap\We,\Fib)$ and $(\Cof,\We\cap\Fib)$ are functorially complete cotorsion pairs.
\end{enumerate}
Moreover, every triple $(\Cof,\We,\Fib)$ of classes of objects of $\mcA$ which satisfies conditions (1) and (2) uniquely determines an abelian model structure on $\mcA$
\end{prop}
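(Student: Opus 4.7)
The strategy is to exploit the defining feature of an abelian model structure: cofibrations are the monomorphisms with cofibrant cokernel and fibrations are the epimorphisms with fibrant kernel, which forces trivial cofibrations to be monomorphisms with cokernel in $\Cof \cap \We$ and trivial fibrations to be epimorphisms with kernel in $\We \cap \Fib$. I would first verify retract closure of $\We$, then establish (2), and finally use it to prove the short-exact-sequence 2-out-of-3 in (1).

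Retract closure of $\We$ reduces to the retract closure of weak-equivalence morphisms in any model category via the identification $X \in \We \iff (0 \to X) \in \we$. For (2), apply the functorial factorization axioms to the canonical maps $0 \to X$ and $X \to 0$ attached to each object $X$. Factoring $0 \to X$ as a cofibration followed by a trivial fibration produces a short exact sequence $0 \to F \to C \to X \to 0$ with $C \in \Cof$ and $F \in \We \cap \Fib$; factoring $X \to 0$ as a cofibration followed by a trivial fibration gives the dual sequence $0 \to X \to F' \to C' \to 0$. The other factorization axiom applied to the same two maps yields the two approximation sequences for the pair $(\Cof \cap \We, \Fib)$, and functoriality is inherited from the functorial factorizations. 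For (CP1), given $C \in \Cof$, $F \in \We \cap \Fib$ and an extension $\eta \colon 0 \to F \to E \to C \to 0$, the projection $E \to C$ is an epimorphism with kernel $F \in \We \cap \Fib$, hence a trivial fibration, while $0 \to C$ is a cofibration; the lifting axiom splits $\eta$, so $\Ext^1_\mcA(C,F) = 0$. The dual argument handles the other pair, and (CP0) follows from the standard retract closure of $\Cof$, $\Fib$ in any model category combined with the retract closure of $\We$ just established.

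For the short-exact-sequence 2-out-of-3 in (1), consider $0 \to W_1 \to W_2 \to W_3 \to 0$ with two of the $W_i$ in $\We$. The morphism-level 2-out-of-3 applied to $(0 \to W_2) = (W_1 \to W_2) \circ (0 \to W_1)$ immediately shows that whenever $W_1, W_2 \in \We$ the inclusion $W_1 \hookrightarrow W_2$ is itself a weak equivalence, and analogously the projection $W_2 \to W_3$ is a weak equivalence when $W_2, W_3 \in \We$. I expect the main obstacle of the proof to be transferring these morphism-level facts back to triviality of the remaining object. I would handle it by using the approximation sequences from (2), together with a $3 \times 3$-lemma construction, to replace the given short exact sequence by an equivalent one whose outer terms already lie in $\Cof$ (or in $\Fib$), and then invoke the Ext-vanishing of (CP1) and the lifting axiom to split off the trivial factor.

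For the moreover clause, one defines $\cof$ and $\fib$ by the mono/epi-with-cofibrant/fibrant-(co)kernel recipe and $\we$ as the class of morphisms factoring as a trivial cofibration followed by a trivial fibration; uniqueness of the resulting abelian model structure is forced by these characterizations. The lifting axioms follow from (CP1) for both cotorsion pairs, the factorization axioms from (CP2) applied to the image of a given morphism, and the retract axiom is routine. The main new obstacle in this direction is the morphism-level 2-out-of-3 for $\we$, which reduces via pullback and pushout constructions to the short-exact-sequence version from (1) together with the $\Ext^1$-vanishing of (2).
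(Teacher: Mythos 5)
Your overall architecture (read off the approximation sequences from the factorization axioms, get (CP1) from lifting, then feed the cotorsion pairs back into (1) and the converse) is reasonable, but note first that the paper does not prove this proposition at all --- it is quoted from Hovey's Theorem 2.2 --- so your sketch has to stand on its own, and as written it does not. The central problem is your opening claim that the definition of an abelian model structure ``forces'' trivial cofibrations to be the monomorphisms with cokernel in $\Cof\cap\We$ and trivial fibrations to be the epimorphisms with kernel in $\We\cap\Fib$. The paper's definition only prescribes cofibrations and fibrations. One implication is indeed formal (a trivial fibration pulls back along $0\to Y$ to show its kernel lies in $\We\cap\Fib$, dually for trivial cofibrations via pushout), but the converse --- an epimorphism whose kernel happens to lie in $\We\cap\Fib$ is a weak equivalence --- is a substantive part of Hovey's theorem. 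Its standard proof is a retract argument that needs precisely the vanishing $\Ext^1_\mcA(C,F)=0$ for $C\in\Cof$, $F\in\We\cap\Fib$; since you then invoke the unproven converse to establish that very vanishing (you declare $E\to C$ a trivial fibration solely because its kernel is in $\We\cap\Fib$), your proof of (CP1) for the pair $(\Cof,\We\cap\Fib)$ is circular. The gap is fixable without the characterization: for $(\Cof,\We\cap\Fib)$ lift the cofibration $F\to E$ (mono with cofibrant cokernel $C$) against the genuine trivial fibration $F\to 0$ (it is a fibration because its kernel $F$ is fibrant, and a weak equivalence because $F\in\We$); for $(\Cof\cap\We,\Fib)$ lift the trivial cofibration $0\to C$ against the fibration $E\to C$. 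The correct logical order is: easy (CP1) and (CP2) first, the characterization of trivial (co)fibrations afterwards, via the retract argument.

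Beyond that, the genuinely hard parts of the statement are left as declarations of intent. The two-out-of-three property of $\We$ on short exact sequences in (1) is not a routine $3\times 3$ manipulation plus ``split off the trivial factor''; in Hovey's proof it rests on the full characterization of trivial (co)fibrations, which you have not established. In the converse direction, the verification of the two-out-of-three axiom for the weak equivalences you define is the heart of Hovey's argument and occupies the bulk of his proof; saying it ``reduces via pullback and pushout constructions'' names the toolbox but not an argument. Finally, your recipe for the factorization axioms --- ``(CP2) applied to the image of a given morphism'' --- would not work: factoring through the image produces an epimorphism followed by a monomorphism, which is the wrong shape and gives no control on the relevant (co)kernels; the actual construction combines an approximation sequence with a direct-sum/pullback (mapping-cylinder style) step and uses both cotorsion pairs. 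So while several ingredients are sound (retract closure of $\We$ via $X\mapsto(0\to X)$, the four approximation sequences from the functorial factorizations, uniqueness in the converse), the proposal as it stands omits or short-circuits exactly the points that make Hovey's theorem a theorem.
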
 

Thus, our task now stands as follows. We start with $\mcA = \CpxG$ and the class $\We$ of all acyclic complexes, and we need to obtain $\Cof,\Fib$ \st $(\Cof\cap\We,\Fib)$ and $(\Cof,\We\cap\Fib)$ are functorially complete cotorsion pairs. A result originally due to Gillespie gives such a construction when we start with a suitable single complete cotorsion pair $(\Cof_0,\Fib_0)$ in $\mcG$. First of all, $(\Cof_0,\Fib_0)$ needs to be what a topologist would call cofibrantly generated:

\begin{prop} \label{prop:Quillen-cotorsion}
Let $\mcG$ be a Grothendieck category and $\mcS$ be a small set of objects containing a generator for $\mcG$. Then there is a functorially complete cotorsion pair $(\mcC,\mcF)$ \st
\begin{enumerate}
\item $\mcF = \{ F \in \mcG \mid \Ext^1_\mcG(S,F) = 0 \textrm{ for all } S \in \mcS \}$.
\item The class $\mcC$ consists precisely of retracts of objects $X$ for which there exists a direct system of monomorphisms $(X_\alpha \mid \alpha\le\lambda)$ which is indexed by an ordinal number $\lambda$ and which satisfies the following conditions:

  \begin{enumerate}
    \item $X_0 = 0$ and $X_\lambda = X$;
    \item $X_\beta \cong \li_{\alpha<\beta} X_\alpha$ canonically for each limit ordinal $\beta$;
    \item $\Coker (X_\alpha \to X_{\alpha+1})$ is isomorphic to an object of $\mcS$ for each $\alpha<\lambda$.
  \end{enumerate}
\end{enumerate}
\end{prop}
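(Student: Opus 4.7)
The plan is to apply a Grothendieck-category version of the Eklof--Trlifaj theorem, which builds a complete cotorsion pair functorially from a small set of generating objects via the small object argument. Define $\mcF$ as in the statement and set
\[ \mcC := \{C \in \mcG \mid \Ext^1_\mcG(C,F) = 0 \text{ for all } F \in \mcF\}. \]
Then (CP0) and (CP1) of Definition~\ref{defn:cotorsion} are immediate, and $\mcS \subseteq \mcC$ automatically. The real work lies in proving functorial completeness and in identifying $\mcC$ precisely as the class described in part~(2).

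Choose a regular cardinal $\lambda$ such that every $S \in \mcS$ is $\lambda$-presentable; this is possible because $\mcS$ is small and every Grothendieck category is locally presentable. For each $S \in \mcS$ fix a short exact sequence $0 \to K_S \to L_S \to S \to 0$ in which $L_S$ is a coproduct of copies of the generator $G \in \mcS$, so $L_S \in \mcC$. Then $F \in \mcF$ if and only if $F$ has the right lifting property with respect to the set of monomorphisms $\mcI = \{K_S \hookrightarrow L_S \mid S \in \mcS\}$. The small object argument, available in $\mcG$ because the sources of maps in $\mcI$ are $\lambda$-presentable, produces for every $X \in \mcG$ a continuous chain of monomorphisms $(X_\alpha)_{\alpha \le \lambda}$ with $X_0 = X$, where each successor step $X_\alpha \hookrightarrow X_{\alpha+1}$ is a pushout of a coproduct of maps in $\mcI$ and limit steps are direct limits. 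By $\lambda$-presentability, any map $K_S \to X_\lambda$ factors through some $X_\alpha$ and is then killed at the next stage, so $F_X := X_\lambda$ lies in $\mcF$; meanwhile the cokernel $C_X := X_\lambda/X$ inherits a continuous filtration whose successive quotients lie in $\mcS$. This yields the first functorial approximation sequence $0 \to X \to F_X \to C_X \to 0$, with $C_X \in \mcC$ by Eklof's lemma.

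To produce the second approximation sequence required by (CP2), apply Salce's trick. Since $G \in \mcS \subseteq \mcC$, there is a functorial epimorphism $G^{(\Hom_\mcG(G,X))} \twoheadrightarrow X$ with some kernel $Y$. Applying the construction of the previous paragraph to $Y$ gives $0 \to Y \to F_Y \to C_Y \to 0$, and forming the pushout of $Y \to F_Y$ along $Y \hookrightarrow G^{(\Hom_\mcG(G,X))}$ produces a short exact sequence $0 \to F_Y \to C^X \to X \to 0$ with $F_Y \in \mcF$ and $C^X$ an extension of $G^{(\Hom_\mcG(G,X))} \in \mcC$ by $C_Y \in \mcC$, hence $C^X \in \mcC$. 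Every step is functorial in $X$, establishing functorial completeness.

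The description in part~(2) is then a direct consequence. For any $C \in \mcC$, the functorial approximation $0 \to F^C \to C^C \to C \to 0$ splits because $\Ext^1_\mcG(C, F^C) = 0$, so $C$ is a retract of the $\mcS$-filtered object $C^C$. Conversely, every retract of an $\mcS$-filtered object lies in $\mcC$ by Eklof's lemma, proved by a routine transfinite induction that uses the exactness of filtered colimits in $\mcG$. The main technical hurdle is the verification that the small object argument really terminates at $\lambda$ with an object of $\mcF$ and does so functorially; this rests essentially on the $\lambda$-presentability of the $K_S$ and on the fact that in a Grothendieck category, transfinite compositions of monomorphisms are monomorphisms that commute suitably with $\Hom_\mcG(Z,-)$ for $\lambda$-presentable $Z$. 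Everything else is formal, and the argument in this generality is by now classical, with proofs tailored to the Grothendieck setting available in the references cited in the paper.
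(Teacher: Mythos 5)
Your overall strategy (small object argument, Salce's trick, Eklof's lemma) is the one the paper's references use, but there is a genuine gap at the crucial step. You characterize $\mcF$ as the objects with the right lifting property with respect to $\mcI = \{K_S \hookrightarrow L_S\}$, where $L_S$ is a coproduct of copies of the generator, and you then conclude that the object $X_\lambda$ produced by the small object argument lies in $\mcF$. This transplants the module-category argument, where $L_S$ can be taken projective, into a setting where it fails: from the exact sequence $\Hom_\mcG(L_S,F) \to \Hom_\mcG(K_S,F) \to \Ext^1_\mcG(S,F) \to \Ext^1_\mcG(L_S,F)$, the lifting property only says that the connecting map is zero, i.e.\ that $\Ext^1_\mcG(S,F) \to \Ext^1_\mcG(L_S,F)$ is injective; it gives $\Ext^1_\mcG(S,F)=0$ only when $\Ext^1_\mcG(L_S,F)=0$, which in a Grothendieck category without projectives is not available (indeed it is essentially what one is trying to prove, since $G \in \mcS$). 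The degenerate case $\mcS=\{G\}$ with the split presentation $L_G=G$, $K_G=0$ makes the failure plain: every object has the lifting property against $\mcI$, yet $\mcF = G^{\perp_1}$ is a proper subclass unless $G$ is projective. Consequently the assertion ``so $F_X := X_\lambda$ lies in $\mcF$'' is unjustified, and this is exactly the point where the Grothendieck setting genuinely differs from $\ModR$.

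The standard repair, and what the sources cited in the paper (Saor\'in--\v{S}\v{t}ov\'i\v{c}ek, Corollary 2.15(2), and the ICRA survey, Theorem 5.16) actually do, is to run the transfinite construction on extensions rather than on lifting squares against a fixed presentation: at each successor step form the pushout of the ``universal extension'' so that every class in $\Ext^1_\mcG(S,X_\alpha)$, $S \in \mcS$, dies in $\Ext^1_\mcG(S,X_{\alpha+1})$, and at the end use that each $S$ is $\lambda$-presentable to show that any extension of $S$ by $X_\lambda = \li_{\alpha<\lambda} X_\alpha$ arises by pushout from an extension of $S$ by some $X_\alpha$, hence splits. With that replacement your remaining steps go through: Salce's trick gives the second approximation sequence, the splitting argument identifies $\mcC$ with retracts of $\mcS$-filtered objects, and Eklof's lemma gives the converse inclusion. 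Two smaller points to tidy up: your successor quotients are coproducts of objects of $\mcS$, so you must refine the filtration to get single objects of $\mcS$ as in part (2)(c) of the statement; and if you do keep presentations $0 \to K_S \to L_S \to S \to 0$ anywhere, the cardinal $\lambda$ must be chosen to bound the presentability of the kernels $K_S$ as well, not just of the objects $S$.
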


\begin{proof}
The proof uses Quillen's small object argument. We refer either to~\cite[Corollary 2.15(2)]{SaoSt11} or to~\cite[Theorem 5.16]{St13-ICRA}. The ideas can be traced back to~\cite{ET01} for module categories, and to~\cite{Gil07,Hov02,Ro02} in the general case.
\end{proof}

\begin{defn} \label{defn:cotorsion-set}
A (functorially) complete cotorsion pair in a Grothendieck category which occurs as in Proposition~\ref{prop:Quillen-cotorsion} is said to be \emph{generated by a small set}.
\end{defn}

Note that Proposition~\ref{prop:Quillen-cotorsion}(2) says that $\mcC$ consists of summands of transfinitely iterated extensions of objects of $\mcS$.
Now we can state the main existence result for model structures. In order to facilitate the statement, we shall use the following notation from~\cite{Gil04}:

\begin{nota} \label{nota:Gil-tilde}
If $\mcE \subseteq \mcG$ is a class of objects closed under extensions, then $\tilde\mcE \subseteq \CpxG$ will stand for the class of all acyclic complexes with all cocycle objects in $\mcE$.
\end{nota}

\begin{prop} \label{prop:models}
Let $\mcG$ be a Grothendieck category and denote by $\we$ the class of all quasi-isomorphisms in $\CpxG$. If $(\mcC_0,\mcF_0)$ is a complete hereditary cotorsion pair in $\mcG$ which is generated by a small set, then there exists an abelian model structure $(\cof,\we,\fib)$ on $\CpxG$ described as follows:
\begin{enumerate}
\item Cofibrations are the monomorphisms $f$ \st $\Ext^1_\CpxG(\Coker f,X) = 0$ for all complexes $X \in \tilde\mcF_0$.
\item Trivial cofibrations are the monomorphisms $f$ \st $\Coker f \in \tilde\mcC_0$.
\item Fibrations are the epimorphisms $g$ \st $\Ext^1_\CpxG(X,\Ker g) = 0$ for all complexes $X \in \tilde\mcC_0$.
\item Trivial fibrations are the epimorphisms $g$ \st $\Ker g \in \tilde\mcF_0$.
\end{enumerate}
The homotopy category of this model category is $\DerG$.
\end{prop}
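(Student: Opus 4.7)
The plan is to invoke Hovey's Proposition~\ref{prop:abel-model}, which reduces the construction of the abelian model structure on $\Cpx\mcG$ to three tasks: first, show that the class $\We$ of acyclic complexes is closed under retracts and satisfies the 2-out-of-3 property on short exact sequences; second, exhibit two functorially complete cotorsion pairs $(\tilde\mcC_0, \Fib)$ and $(\Cof, \tilde\mcF_0)$ in $\Cpx\mcG$, where $\Cof$ and $\Fib$ are the classes of cofibrant and fibrant complexes prescribed by (1) and (3); and third, verify the key identifications $\Cof \cap \We = \tilde\mcC_0$ and $\Fib \cap \We = \tilde\mcF_0$. The first task is immediate from the long exact cohomology sequence. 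Once the cotorsion pairs and identifications are in hand, the translation of (1)--(4) from conditions on morphisms to the class-based data $(\Cof, \We, \Fib)$ used by Hovey is routine, since an abelian model structure is determined by its classes of (trivially) cofibrant and (trivially) fibrant objects.

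For functorial completeness, I would apply Proposition~\ref{prop:Quillen-cotorsion} to the Grothendieck category $\Cpx\mcG$. Given a small set $\mcS$ generating $(\mcC_0, \mcF_0)$ and containing a generator of $\mcG$, I construct two small sets of complexes: for the pair $(\Cof, \tilde\mcF_0)$, the sphere and disk complexes $\{S^n(S), D^n(S) : S \in \mcS, n \in \bbZ\}$, whose common right $\Ext^1_{\Cpx\mcG}$-orthogonal is $\tilde\mcF_0$ by a direct computation that uses that $\mcF_0$ is closed under extensions (so acyclic complexes with cocycles in $\mcF_0$ also have components in $\mcF_0$); and for the pair $(\tilde\mcC_0, \Fib)$, one augments this with certain bounded acyclic complexes built from $\mcS$ that generate $\tilde\mcC_0$ as transfinite extensions. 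Proposition~\ref{prop:Quillen-cotorsion} then yields functorial completeness, and Eklof's lemma combined with the standard splitting argument applied to approximation sequences identifies the left halves of the two resulting cotorsion pairs with $\Cof$ and $\tilde\mcC_0$, respectively.

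The main obstacle is the third task, the Gillespie identities, which is the only step where the hereditary hypothesis on $(\mcC_0, \mcF_0)$ is used in an essential way. The inclusion $\tilde\mcC_0 \subseteq \Cof \cap \We$ follows by a Yoneda-style computation of $\Ext^1_{\Cpx\mcG}(X, Y)$ for $X \in \tilde\mcC_0$ and $Y \in \tilde\mcF_0$ as degreewise chain maps modulo homotopy; the hereditary vanishing $\Ext^i_\mcG(\mcC_0, \mcF_0) = 0$ for all $i \ge 1$ cascades through the short exact sequences expressing the components of $X$ and $Y$ as extensions of their cocycles, and lets one lift a putative extension to a splitting by inductively solving the resulting degreewise lifting problems. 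The converse $\Cof \cap \We \subseteq \tilde\mcC_0$ is the more delicate direction: given an acyclic $X \in \Cof$, one approximates each cocycle via the cotorsion pair $(\mcC_0, \mcF_0)$ in $\mcG$ and uses the vanishing of $\Ext^1_{\Cpx\mcG}(X, -)$ against suitable acyclic complexes built from these approximations to force the cocycles of $X$ into $\mcC_0$. With these identifications in place, Hovey's theorem produces the abelian model structure, and its homotopy category is $\DerG$ because the weak equivalences are by construction exactly the quasi-isomorphisms, so $\Cpx\mcG[\we^{-1}]$ coincides with the Verdier quotient defining $\DerG$.
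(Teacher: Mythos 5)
Your overall plan---Hovey's Proposition~\ref{prop:abel-model} applied to the two cotorsion pairs $\bigl({}^{\perp_1}\tilde\mcF_0,\tilde\mcF_0\bigr)$ and $\bigl(\tilde\mcC_0,\tilde\mcC_0^{\perp_1}\bigr)$ in $\Cpx\mcG$, together with the identifications $\Cof\cap\We=\tilde\mcC_0$ and $\Fib\cap\We=\tilde\mcF_0$---is exactly the standard Gillespie--Hovey route followed by the sources the paper cites (\cite{Gil04,SaoSt11,St13-ICRA}); the paper itself only gives references. Your handling of the first pair is fine: the common right $\Ext^1_{\Cpx\mcG}$-orthogonal of the spheres and disks on $\mcS$ is $\tilde\mcF_0$, and Proposition~\ref{prop:Quillen-cotorsion} gives functorial completeness.

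The genuine gap is the second cotorsion pair. First a slip: if you literally augment the sphere-and-disk set, the spheres lie in the left class of the generated pair but are not acyclic, so that left class cannot be $\tilde\mcC_0$. More seriously, even after discarding the spheres, the claim that $\tilde\mcC_0$ consists of retracts of transfinite extensions of \emph{bounded} acyclic complexes built from $\mcS$ is false in general. Take $R=k[x]/(x^2)$ and the hereditary, set-generated pair $(\mcC_0,\mcF_0)=(\ModR,\InjR)$ of Example~\ref{expl:inj-model}, so that $\tilde\mcC_0$ is the class of all acyclic complexes. Every bounded acyclic complex is a finite extension of disk complexes $D^n(M)$, and the complex $Y=\Sigma^{-1}P$, where $P$ is the doubly unbounded acyclic complex $\cdots\xrightarrow{x}R\xrightarrow{x}R\xrightarrow{x}\cdots$, has injective components, hence satisfies $\Ext^1_{\Cpx R}(D^n(M),Y)=0$ for all $M$ and therefore $\Ext^1_{\Cpx R}(A,Y)=0$ for every bounded acyclic $A$; by the Eklof lemma the vanishing persists for all retracts of transfinite extensions of such complexes. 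But every extension of $P$ by $Y$ is degreewise split, so $\Ext^1_{\Cpx R}(P,Y)\cong\Hom_{\Htp R}(P,P)\neq0$, since $P$ is acyclic with non-injective cycles and hence not contractible. Thus $P\in\tilde\mcC_0$ does not lie in the left class your set generates, and the identification on which your second step rests fails. What is actually needed is the deconstructibility of $\tilde\mcC_0$: every complex in $\tilde\mcC_0$ is filtered by subcomplexes in $\tilde\mcC_0$ that are small in cardinality (not bounded in degree), proved from the deconstructibility of $\mcC_0$ (Proposition~\ref{prop:Quillen-cotorsion}(2)) via a Hill-lemma/cardinality-filtration argument; this is the hard technical core of the cited proofs and is missing from your proposal. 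A smaller point: Hovey's correspondence is formulated in terms of the class of trivial objects, so you still must check that the resulting weak equivalences are precisely the quasi-isomorphisms before concluding that the homotopy category is $\DerG$.
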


\begin{proof}
We refer to~\cite[Theorem 4.22]{SaoSt11} or~\cite[Theorem 7.16]{St13-ICRA}. The statement was originally obtained in a more restricted form by Gillespie~\cite{Gil04} and appears in various forms in the literature in~\cite{EJv2,EGPT12,Gil07,Gil11,YL11} and most certainly also elsewhere.
\end{proof}

\begin{rem} \label{rem:desc-cofib}
The description of general fibrations and cofibrations, as opposed to their trivial versions, is not very straightforward. It may be difficult to decide in general whether the groups $\Ext^1_\CpxG(\Coker f,X)$ or $\Ext^1_\CpxG(X,\Ker g)$ vanish. An alternative description of the cokernel of a cofibration which may provide more information was given in~\cite[Proposition 4.5]{St13-deconstr}.

Often, however, we only need the following simple consequence of the adjunctions in~\cite[Lemma 3.1(5) and (6)]{Gil04}: If $f$ is a cofibration, then all components of $\Coker f$ belong to $\mcC_0$. Similarly all components of $\Ker g$ are in $\mcF_0$ for every fibration~$g$.
\end{rem}

An important corollary allows us to compute in $\DerG$:

\begin{cor} \label{cor:compute-in-DerG}
Suppose that $\mcG$ and $(\mcC_0,\mcF_0)$ are as in Proposition~\ref{prop:models}. If $X$ is cofibrant and $Y$ is fibrant, then the canonical homomorphism
\[ \Hom_\HtpG(X,Y) \la \Hom_\DerG(X,Y) \]
is an isomorphism.
\end{cor}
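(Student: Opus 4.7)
The plan is to combine the general model-categorical computation of $\Hom$-sets in the homotopy category with a concrete comparison between model-homotopy and chain homotopy on $\CpxG$.

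By Proposition~\ref{prop:models} the model structure in question has $\DerG$ as its homotopy category. General model-category theory \cite[Theorem~1.2.10]{Hov99} then yields a bijection
\[
\Hom_\CpxG(X,Y)/{\sim_m} \;\longrightarrow\; \Hom_\DerG(X,Y),
\]
where $\sim_m$ denotes (left, equivalently right) model-homotopy, valid whenever $X$ is cofibrant and $Y$ is fibrant. Since by definition $\Hom_\HtpG(X,Y) = \Hom_\CpxG(X,Y)/{\sim_h}$ for ordinary chain homotopy $\sim_h$, the statement reduces to the equality $\sim_m = \sim_h$ on morphisms from a cofibrant $X$ to a fibrant $Y$.

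To establish this equality I would exhibit an explicit cylinder object for $X$ through which left-homotopies manifest as chain homotopies. Take the standard algebraic cylinder $\mathrm{Cyl}(X)$ (the mapping cylinder of $\id_X$), together with the factorisation $X \oplus X \hookrightarrow \mathrm{Cyl}(X) \twoheadrightarrow X$ of the fold map. The second arrow is a chain-homotopy equivalence, hence in particular a quasi-isomorphism, and the first is a monomorphism whose cokernel is $X[-1]$. The characterisation of cofibrations in Proposition~\ref{prop:models}(1) only involves vanishing of $\Ext^1_\CpxG(-,\tilde\mcF_0)$, and $\tilde\mcF_0$ is visibly shift-invariant, so the class of cofibrant objects in $\CpxG$ is closed under shifts; in particular $X[-1]$ is cofibrant, making the inclusion an abelian cofibration. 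Thus $\mathrm{Cyl}(X)$ is a cylinder object in the model sense. By \cite[Lemma~1.2.5]{Hov99}, for $X$ cofibrant and $Y$ fibrant the left-homotopy relation on $\Hom_\CpxG(X,Y)$ is independent of the choice of cylinder object, so $\sim_m$ coincides with left-homotopy through $\mathrm{Cyl}(X)$. But tautologically a map $\mathrm{Cyl}(X) \to Y$ extending $(f,g)\dd X \oplus X \to Y$ is the same datum as a chain homotopy from $f$ to $g$, so $\sim_m = \sim_h$, finishing the proof.

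The single non-formal point is the verification that the algebraic cylinder is a cylinder object in the model sense; given the abelian framework of Proposition~\ref{prop:abel-model} this reduces to the shift-closure of the class of cofibrant complexes, which is immediate from the description of $\cof$ in Proposition~\ref{prop:models}(1). Everything else is bookkeeping with the general theory of model categories.
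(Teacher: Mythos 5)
Your argument is correct and follows the same reduction as the paper: both identify $\Hom_\DerG(X,Y)$ with $\Hom_\CpxG(X,Y)$ modulo model-categorical homotopy via \cite[Theorem 1.2.10]{Hov99} and then show that this homotopy relation coincides with ordinary cochain homotopy. The only difference is that the paper quotes this last step from Gillespie and Becker, whereas you prove it directly by checking that the algebraic cylinder is a model-categorical cylinder object (using that $\tilde\mcF_0$, hence the class of cofibrant complexes, is closed under shifts) — which is essentially the content of the cited results, so your verification is a sound, self-contained substitute.
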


\begin{proof}
This is immediate as soon as one proves that the abstract homotopy relation from the model structure is just the usual cochain complex homotopy. The latter quickly follows from \cite[Proposition 4.4]{Gil11} or~\cite[Proposition 1.1.14]{Beck12}.
\end{proof}

\begin{expl} \label{expl:inj-model}
In any Grothendieck category $\mcG$ we have the trivial functorially complete hereditary cotorsion pair $(\mcG,\mcI)$, where $\mcI$ stands for the class of all injective objects. This easily follows from Proposition~\ref{prop:Quillen-cotorsion} and a version of Baer's Lemma for Grothendieck categories.

Hence we have an abelian model structure in $\CpxG$ in which every object is cofibrant, and fibrant objects are certain complexes with injective components. This folklore model structure is called the \emph{injective model structure} on $\Cpx\mcG$ and it was mentioned in the proof of Lemma~\ref{lem:way-out}.
\end{expl}

\begin{expl} \label{expl:proj-model}
Dually, if $\mcG = \ModR$ for a ring $R$, we have a functorially complete hereditary cotorsion pair $(\mcP,\ModR)$, where $\mcP$ is the class of all projective modules. This gives rise to another folklore model structure, the \emph{projective model structure} on $\Cpx R$; see~\cite[\S2.3]{Hov99}.
\end{expl}

\subsection{A tilting Quillen equivalence}
\label{subsec:QE}

Let us turn back to the case of the derived equivalence $\RHom_\mcG(T,-)\dd \DerG \to \Der R$ from Proposition~\ref{prop:tilt-char} and Theorem~\ref{thm:cotilt-from-tilt}. With respect to suitable model structures we prove that the underlying functor $\Hom_\mcG(T,-)\dd \Cpx\mcG \to \Cpx R$ is a right Quillen equivalence. That is, it not only induces a derived equivalence, but it is also the right hand side of an adjunction which is compatible with the model structures (see~\cite{Hir03,Hov99} for the precise definition). From the point of view of homotopy theory, this says that we have as favorable enrichment for the triangle equivalence as we can hope for.

\begin{thm} \label{thm:QE}
Let $\mcG$ be a Grothendieck category with a classical tilting object $T$ and denote $R = \End_\mcG(T)$. Consider $\Cpx\mcG$ with the injective model structure (Example~\ref{expl:inj-model}) and $\Cpx R$ with the projective model structure (Example~\ref{expl:proj-model}). Then $\Hom_\mcG(T,-)\dd \Cpx\mcG \to \Cpx R$ is a right Quillen equivalence.
\end{thm}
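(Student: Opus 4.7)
The plan is to exhibit a left adjoint $-\otimes_R T \dd \Cpx R \to \Cpx\mcG$ to $\Hom_\mcG(T,-)$, verify that the resulting adjunction is a Quillen adjunction, and then deduce Quillen equivalence from the fact that the induced functor on homotopy categories is the triangle equivalence of Proposition~\ref{prop:tilt-char}.

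First, I would construct the left adjoint. As $\mcG$ is a Grothendieck (hence locally presentable) category and $\Hom_\mcG(T,-)\dd \mcG \to \ModR$ preserves all limits, the special adjoint functor theorem yields a cocontinuous left adjoint $-\otimes_R T\dd \ModR \to \mcG$ with $R \otimes_R T \cong T$. Applied componentwise this gives an adjoint pair $(- \otimes_R T,\Hom_\mcG(T,-))$ between $\Cpx R$ and $\Cpx\mcG$.

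Second, I would check the Quillen condition by showing that $-\otimes_R T$ preserves cofibrations and trivial cofibrations. By Example~\ref{expl:inj-model}, cofibrations in the injective model structure on $\Cpx\mcG$ are all monomorphisms, and the trivial cofibrations are exactly the monomorphisms with acyclic cokernel. By Remark~\ref{rem:desc-cofib} applied to the projective model structure on $\Cpx R$, every cofibration $f$ has $\Coker f$ projective in each degree, so the short exact sequence $0 \to X^i \to Y^i \to (\Coker f)^i \to 0$ splits for every $i$; being additive, $-\otimes_R T$ preserves the splitting and therefore sends $f$ to a degreewise split monomorphism, i.e.\ a cofibration in $\Cpx\mcG$. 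If in addition $f$ is trivial, then $\Coker f$ lies in $\tilde\mcP$ and can be written as a coproduct of disks $D^i(P_i)$ with $P_i$ projective over $R$; since $-\otimes_R T$ is cocontinuous, it sends this to the coproduct of the contractible complexes $D^i(P_i \otimes_R T)$ in $\Cpx\mcG$, which is again contractible and in particular acyclic. Hence $f \otimes_R T$ is a monomorphism with acyclic cokernel, as required.

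Third, I would conclude Quillen equivalence. Given the Quillen adjunction above, the total right derived functor of $\Hom_\mcG(T,-)$ is computed by fibrant (K-injective) replacement, and so it coincides with $\RHom_\mcG(T,-)\dd \DerG \to \Der R$. By Proposition~\ref{prop:tilt-char} the latter is a triangle equivalence, so the derived adjunction is an equivalence of homotopy categories, which by definition makes the original Quillen adjunction a Quillen equivalence.

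The main obstacle is the verification that $-\otimes_R T$ preserves cofibrations; in general this could require delicate flatness hypotheses on $T$ as an $R$-module, but becomes elementary here because cofibrations in the projective model structure on $\Cpx R$ are automatically degreewise split, so only additivity of $-\otimes_R T$ is needed. The rest of the argument, once the left adjoint is identified, is essentially bookkeeping around the derived equivalence already provided by Proposition~\ref{prop:tilt-char}.
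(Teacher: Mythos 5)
Your proposal is correct and takes essentially the same route as the paper: existence of the left adjoint via the adjoint functor theorem, an elementary splitness argument for the Quillen adjunction, and then the derived equivalence $\RHom_\mcG(T,-)$ from Proposition~\ref{prop:tilt-char} to upgrade it to a Quillen equivalence. The only (immaterial) difference is that you verify the adjunction condition on the left adjoint, using that cofibrations in the projective model structure on $\Cpx R$ have degreewise projective cokernel and hence are degreewise split, whereas the paper dually checks that $\Hom_\mcG(T,-)$ preserves fibrations and trivial fibrations, using that their kernels have injective components and hence split degreewise.
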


\begin{proof}
The functor $U = \Hom_\mcG(T,-)$ has a left adjoint $F\dd \Cpx R \to \Cpx\mcG$ by the special adjoint functor theorem, \cite[\S V.8]{McL2}. A more explicit description of $F$ will be given later in Remark~\ref{rem:left-adj-to-Hom(T,-)}. At the moment, we only note that both $U$ and $F$ are clearly additive. As a fibration in $\Cpx G$ is an epimorphism whose kernel is a complex with injective components, every component must split and it is easy to show that $U$ preserves fibrations and trivial fibrations. In particular, $(F,U)$ is a Quillen adjunction. Since $\Rder U = \RHom_\mcG(T,-)$ is an equivalence thanks to Proposition~\ref{prop:tilt-char}, $\Hom_\mcG(T,-)\dd \Cpx\mcG \to \Cpx R$ is a right Quillen equivalence as stated.
\end{proof}

\subsection{Cotilting model structures}
\label{subsec:model-cotilting}

Now we shall get to the main point: model structures on $\Cpx R$ attached to a given big cotilting module. First of all, with every cotilting module we can associate a complete cotorsion pair.

\begin{defn} \label{defn:cotilt-class}
Let $C$ be a big cotilting module over a ring $R$. Then the \emph{cotilting class} associated with $C$ is defined as
\[ \mcX_C = \{ X \in \ModR \mid \Ext^i_R(X,C) = 0 \textrm{ for all } i > 0 \}. \]
\end{defn}

\begin{prop} \label{prop:cotilt-cot-pair}
Let $C$ be a big cotilting module over a ring $R$, and put
\[ \mcF_0 = \{ F \in \ModR \mid \Ext^1(X,F) = 0 \textrm{ for all } X \in \mcX_C \}. \]
Then $(\mcX_C,\mcF_0)$ is a functorially complete hereditary cotorsion pair in $\ModR$ which is generated by a small set. Moreover, $\mcX_C$ is functorially resolving, it is closed in $\ModR$ under taking products and direct limits, and we have $\mcX_C \cap \mcF_0 = \Prod C$.
\end{prop}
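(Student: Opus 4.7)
\emph{Plan.} The strategy is to first present $(\mcX_C,\mcF_0)$ as a hereditary cotorsion pair by compressing the higher $\Ext$-vanishing in the definition of $\mcX_C$ to a single $\Ext^1$-condition via cosyzygies of $C$, and then to exploit the sequence (BC3) to identify $\mcX_C \cap \mcF_0 = \Prod C$. Closure under products will fall out as a corollary, while closure under direct limits will need the $\Sigma$-pure-injectivity of big cotilting modules due to Bazzoni.

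From (BC1), fix an injective coresolution $0 \to C \to I_0 \to \cdots \to I_n \to 0$ and write $Z^j = \Img(I_{j-1} \to I_j)$, so $Z^0 = C$ and $Z^n$ is injective. Dimension shifting yields $\Ext^i_R(X, C) \cong \Ext^1_R(X, Z^{i-1})$ for $1 \le i \le n$ and $\Ext^i_R(-, C) \equiv 0$ for $i > n$, so $\mcX_C = {}^{\perp_1}N$ for $N = C \oplus Z^1 \oplus \cdots \oplus Z^{n-1}$. Each $Z^j$ lies in $\mcF_0$ (since $\Ext^1_R(X, Z^j) = \Ext^{j+1}_R(X, C) = 0$ for $X \in \mcX_C$), whence ${}^{\perp_1}\mcF_0 \subseteq {}^{\perp_1}\{Z^0, \ldots, Z^{n-1}\} = \mcX_C$; the reverse inclusion is definitional, and both classes are closed under retracts. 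Heredity follows from the long exact sequence on $\Ext_R(-,C)$. Generation by a small set, and thus functorial completeness, then follows from the Eklof--Trlifaj theorem~\cite{ET01} applied to $N$: it furnishes a small set $\mcS \subseteq \mcX_C$ containing a generator with $\mcS^{\perp_1} = N^{\perp_1} = \mcF_0$ and every member of $\mcX_C$ a retract of an $\mcS$-filtered module, which matches Proposition~\ref{prop:Quillen-cotorsion}. Functorial resolution then follows from Lemma~\ref{lem:functor-for-free} since $R \in \mcX_C$ and $\mcX_C$ is closed under coproducts.

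The substantive step is $\mcX_C \cap \mcF_0 = \Prod C$. The inclusion $\supseteq$ comes from (BC2) and from commutation of $\Ext^1_R$ with products in the second variable. For $\subseteq$, I would produce a $\Prod C$-preenvelope of every $Y \in \mcX_C$. First, $Y$ is cogenerated by $C$: given $0 \ne y \in Y$ and a map $g \dd Y \to Q$ with $g(y) \ne 0$ (using that $Q$ is an injective cogenerator), lift $g$ along the epimorphism $C_0 \twoheadrightarrow Q$ from (BC3). The obstruction $\Ext^1_R(Y, \ker(C_0 \to Q))$ shifts through the broken-up (BC3) sequence to $\Ext^n_R(Y, C_n) = 0$, since $\Ext^i_R(Y, C_j) = 0$ for $i \ge 1$ (as $Y \in \mcX_C$ and $C_j \in \Prod C$). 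Composing with a coordinate projection from $C_0 \hookrightarrow C^{I_0}$ yields a map $Y \to C$ nonzero at $y$. Then the canonical monomorphism $Y \to C^{\Hom_R(Y, C)} =: C^I$ fits in $0 \to Y \to C^I \to Z \to 0$, and the map $\Hom_R(C^I, C) \to \Hom_R(Y, C)$ is split surjective via the coordinate projections, so the long exact sequence forces $\Ext^1_R(Z, C) = 0$; higher vanishing follows by dimension shifting, giving $Z \in \mcX_C$. If additionally $Y \in \mcF_0$, then $\Ext^1_R(Z, Y) = 0$, the sequence splits, and $Y$ is a direct summand of $C^I \in \Prod C$.

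Closure under products then follows by iterating the preenvelope $n$ times for each member of a family $(X_j)_{j \in J}$ in $\mcX_C$, concatenating into $0 \to X_j \to F_j^{(0)} \to \cdots \to F_j^{(n-1)} \to W_j \to 0$ with $F_j^{(k)} \in \Prod C$ and all cokernels in $\mcX_C$, taking products (exact, and $\Prod C$ is stable under products), and dimension shifting: $\Ext^i_R(\prod_j X_j, C) \cong \Ext^{i+n}_R(\prod_j W_j, C)$ vanishes for $i \ge 1$ by (BC1). For closure under direct limits I would invoke Bazzoni's theorem that $C$ is $\Sigma$-pure-injective, which makes each $\Ext^1_R(-, Z^j)$ convert direct limits into inverse limits of zeros. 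The main technical obstacle is the cogeneration argument and the verification $Z \in \mcX_C$; once this $\Prod C$-preenvelope is built, the identification $\mcX_C \cap \mcF_0 = \Prod C$ and closure under products propagate formally, leaving only the pure-injectivity input for direct limits.
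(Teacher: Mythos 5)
Your reduction $\mcX_C={}^{\perp_1}N$ via the cosyzygies of $C$, the heredity argument, the identification $\mcX_C\cap\mcF_0=\Prod C$ through the $\Prod C$-preenvelope $Y\to C^{\Hom_R(Y,C)}$ built from (BC2) and (BC3), and the closure under products by taking products of the resulting coresolutions are all correct, and they are essentially the arguments behind the results of \cite{GT06} that the paper simply cites for this proposition. The genuine problem is the step you treat as routine: ``Generation by a small set, and thus functorial completeness, then follows from the Eklof--Trlifaj theorem applied to $N$.'' The Eklof--Trlifaj theorem produces complete cotorsion pairs \emph{generated} by a set, i.e.\ pairs of the form $\big({}^{\perp_1}(\mcS^{\perp_1}),\,\mcS^{\perp_1}\big)$; your pair is \emph{cogenerated} by the single module $N$, namely $\mcX_C={}^{\perp_1}N$ and $\mcF_0=({}^{\perp_1}N)^{\perp_1}$, and no set $\mcS$ with $\mcS^{\perp_1}=\mcF_0$ is supplied by that theorem. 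The equality $N^{\perp_1}=\mcF_0$ you assert is false in general: already when $C$ is an injective cogenerator (the $0$-cotilting case, $N=C$) one has $N^{\perp_1}=\ModR$ while $\mcF_0$ is the class of injectives. Completeness or set-generation of a cotorsion pair cogenerated by a module is not automatic (the pair $\big({}^{\perp_1}\bbZ,({}^{\perp_1}\bbZ)^{\perp_1}\big)$ over $\bbZ$ is the Whitehead problem); for cotilting pairs it is a real theorem, resting on the pure-injectivity of $C$ \cite{St06} together with a deconstructibility argument for $\mcX_C$ (closure under transfinite extensions and pure submodules, then a Hill/Kaplansky-type theorem). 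This is exactly the content of \cite[Theorems 8.1.7 and 8.1.9]{GT06} invoked in the paper, and it is the hypothesis needed to feed Proposition~\ref{prop:Quillen-cotorsion} later. As written, your argument establishes only that $(\mcX_C,\mcF_0)$ is a hereditary cotorsion pair, not that it is functorially complete or generated by a small set.

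The second error is the appeal to ``Bazzoni's theorem that $C$ is $\Sigma$-pure-injective.'' Cotilting modules are pure-injective (Bazzoni for $n=1$, \cite{St06} for general $n$), but they are not $\Sigma$-pure-injective in general; the paper's Remark~\ref{rem:loc-noeth} explicitly singles out $\Sigma$-pure-injectivity as a restrictive extra condition tied to local noetherianity. Moreover, even plain pure-injectivity of $C$ does not pass to the cosyzygies $Z^j$, so the claimed mechanism ``$\Ext^1_R(-,Z^j)$ converts direct limits into inverse limits of zeros'' has no justification. Closure of $\mcX_C$ under direct limits does hold, but it requires a different argument using pure-injectivity of $C$ itself (again \cite[Lemma 8.1.4]{GT06}). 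Finally, for the appeal to Lemma~\ref{lem:functor-for-free} you should record explicitly that $\mcX_C$ is resolving (contains the projectives, closed under extensions, retracts and kernels of epimorphisms via the long exact sequence for $\Ext_R(-,C)$) and closed under coproducts; this part is easy, but it is a hypothesis of that lemma, not a consequence of what precedes it in your text.
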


\begin{proof}
We refer to~\cite[Lemma 8.1.4(c)]{GT06} and to~\cite[Theorem 8.1.7 and 8.1.9]{GT06}. That $\mcX_C$ is functorially resolving follows from Lemma~\ref{lem:functor-for-free}.
\end{proof}

Hence, by virtue of Proposition~\ref{prop:models}, associated with every big cotilting module we have an abelian model structure on $\Cpx R$ whose homotopy category is $\Der R$. In the last few paragraphs of the section, we shall obtain a more satisfactory description of cofibrations. First we recall another piece of standard notation based on~\cite[\S IV.4]{CE56}.

\begin{nota} \label{nota:total-hom}
Suppose that $X,Y \in \Cpx\mcA$, where $\mcA$ is an abelian category. Then the \emph{total Hom-complex} $\HOM_\mcA(X,Y)$ will stand for the complex of abelian groups \st
\[ \HOM_\mcA(X,Y)^n = \prod_{i \in \bbZ} \Hom_\mcA(X^i,Y^{i+n}) \]
for each $i \in \bbZ$, and the differential is defined as the graded commutator. That is, if $f = (f^i)_{i \in \bbZ} \in \prod_{i \in \bbZ} \Hom_\mcA(X^i,Y^{i+n})$ is a collection of morphisms in $\mcA$, we put
\[ \dif_{\HOM_\mcA(X,Y)}(f) = \big( \dif_Y \circ f^i - (-1)^n f^{i+1} \circ \dif_X \big)_{i \in \bbZ}. \]
If $\mcA = \ModR$ for a ring $R$, we shall write $\HOM_R(X,Y)$. 
\end{nota}

It is a standard fact that $H^i\big(\HOM_\mcA(X,Y)\big) \cong \Hom_{\Htp\mcA}(X,\Sigma^i Y)$ for all $i \in \bbZ$. Now we can prove the following result.

\begin{thm} \label{thm:cotilt-models}
Let $R$ be a ring, $C \in \ModR$ be a big cotilting module and $(\mcX_C,\mcF_0)$ the complete cotorsion pair from Proposition~\ref{prop:cotilt-cot-pair}. Then there exists an abelian model structure $(\cof,\we,\fib)$ on $\Cpx R$ described as follows:
\begin{enumerate}
\item Cofibrations are the monomorphisms $f$ \st $\Coker f \in \Cpx{\mcX_C}$.
\item Trivial cofibrations are the monomorphisms $f$ \st $\Coker f \in \tilde\mcX_C$.
\item Fibrations are the epimorphisms $g$ with $\Ext^1_{\Cpx R}(X,\Ker g) = 0$ for $X \in \tilde\mcX_C$.
\item Trivial fibrations are the epimorphisms $g$ \st $\Ker g \in \tilde\mcF_0$.
\end{enumerate}
The homotopy category of this model category is $\Der R$.
\end{thm}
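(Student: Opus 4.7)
My plan is to derive Theorem~\ref{thm:cotilt-models} directly from Proposition~\ref{prop:models} applied to the cotorsion pair $(\mcX_C,\mcF_0)$ furnished by Proposition~\ref{prop:cotilt-cot-pair}, which is functorially complete, hereditary, and generated by a small set---exactly the hypotheses required. This immediately yields an abelian model structure on $\Cpx R$ whose homotopy category is $\Der R$, and items (2), (3), (4) of the statement agree verbatim with Proposition~\ref{prop:models}(2)--(4) after setting $\mcC_0 = \mcX_C$.

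The only item not covered verbatim is (1), where Proposition~\ref{prop:models} describes cofibrations as monomorphisms $f$ satisfying $\Ext^1_{\Cpx R}(\Coker f,X) = 0$ for all $X \in \tilde\mcF_0$, whereas we wish to replace this by $\Coker f \in \Cpx{\mcX_C}$. Hence it suffices to establish, for every $Y \in \Cpx R$,
\[
Y \in \Cpx{\mcX_C} \Longleftrightarrow \Ext^1_{\Cpx R}(Y,X) = 0 \text{ for all } X \in \tilde\mcF_0.
\]
The direction $\Longleftarrow$ is contained in Remark~\ref{rem:desc-cofib}; alternatively, it may be proved by testing against the disk complexes $D^n(F) \in \tilde\mcF_0$ with $F \in \mcF_0$, using the adjunction isomorphism $\Ext^1_{\Cpx R}(Y,D^n(F)) \cong \Ext^1_R(Y^n,F)$ and the defining cotorsion-pair identity $\mcX_C = {}^\perp\mcF_0$.

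The serious direction $\Longrightarrow$ claims that any short exact sequence $0 \to X \to E \to Y \to 0$ in $\Cpx R$ with $Y \in \Cpx{\mcX_C}$ and $X \in \tilde\mcF_0$ splits. Each component $X^n$ belongs to $\mcF_0$ (from the sequence $0 \to Z^n(X) \to X^n \to Z^{n+1}(X) \to 0$ and closure of $\mcF_0$ under extensions), so each componentwise sequence splits, making the whole extension componentwise split. Such extensions are classified by $\Hom_{\Htp R}(Y,\Sigma X)$, and since $\tilde\mcF_0$ is closed under suspension the problem reduces to proving
\[
\Hom_{\Htp R}(Y,W) = 0 \quad \text{for all } Y \in \Cpx{\mcX_C},\ W \in \tilde\mcF_0.
\]

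The principal obstacle is to produce a null-homotopy of an arbitrary chain map $f \dd Y \to W$ over complexes unbounded in both directions. The decisive input is hereditarity: Proposition~\ref{prop:cotilt-cot-pair} yields $\Ext^{\ge 1}_R(Y^n,Z^k(W)) = 0$ for all $n,k$, so each short exact sequence $0 \to Z^{n-1}(W) \to W^{n-1} \to Z^n(W) \to 0$ stays exact after applying $\Hom_R(Y^n,-)$---exactly what allows partial homotopies to be lifted through the epimorphism $W^{n-1} \twoheadrightarrow Z^n(W)$. I would package this as an acyclicity statement for the total Hom complex $\HOM_R(Y,W)$: filter $W$ by its bounded good truncations $W_{[a,b]} := \ttge{a}\ttle{b}W$ (each again in $\tilde\mcF_0$), observe that extending $[a,b]$ by one degree in either direction fits into a short exact sequence whose third term is a contractible disk complex, and use hereditarity to check that $\HOM_R(Y,-)$ preserves the exactness of these filtration steps. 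A standard d\'evissage then reduces acyclicity of $\HOM_R(Y,W)$ to the contractible pieces. This technique is well established in the abelian-model-structure literature, and the detailed transfinite bookkeeping is carried out in \cite{Gil04,SaoSt11,St13-deconstr,St13-ICRA}.
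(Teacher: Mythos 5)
Your reduction of item (1) to the vanishing of $\Hom_{\Htp R}(Y,W)$ for $Y \in \Cpx{\mcX_C}$ and $W \in \tilde\mcF_0$, and your treatment of the easy direction, agree with the paper. But the key step is not correct as proposed: the d\'evissage you describe uses only that $(\mcX_C,\mcF_0)$ is a complete hereditary cotorsion pair, and under those hypotheses alone the desired vanishing is false. Take the hereditary, functorially complete, set-generated pair $(\mcP,\ModR)$ of Example~\ref{expl:proj-model} over $R=k[x]/(x^2)$, and let $Y=W$ be the doubly unbounded complex with all components $R$ and differential multiplication by $x$: then $Y$ has components in $\mcC_0=\mcP$, $W$ is acyclic with cocycles in $\mcF_0=\ModR$, yet $\id_Y$ is not null-homotopic (a null-homotopy would force $1=xa+bx$ in $R$, which is impossible), so $Y$ is not cofibrant. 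Hence no argument of the shape you propose can close the gap. Concretely, your filtration by truncations $W_{[a,b]}$ does prove acyclicity of $\HOM_R(Y,W_{[a,b]})$ for the bounded pieces (hereditarity lets you lift through $W^{i-1}\twoheadrightarrow Z^i(W)$, and the descending induction starts because the piece is bounded above), but the passage to a $W$ unbounded in both directions is not formal: $\HOM_R(Y,-)$ does not commute with the filtered union $\bigcup_b \tau^{\le b}W$ (its degrees are infinite products), while writing $W$ as an inverse limit of the bounded-below truncations $\tau^{\ge a}W$ requires precisely the vanishing $\Hom_{\Htp R}(Y,\tau^{\ge a}W)=0$, which is the hard case and, as the example shows, fails in general.

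What is missing is the finiteness specific to cotilting, which your sketch never invokes: since $C$ is big $n$-cotilting, every module has $\mcX_C$-resolution dimension at most $n$, hence (by Proposition~\ref{prop:res-dim}) every acyclic complex with components in $\mcX_C$ has all its cocycles in $\mcX_C$, i.e.\ lies in $\tilde\mcX_C$. The paper exploits this as follows: choose an approximation $0\to Y'\to P\to Z\to 0$ of the given $Z\in\Cpx{\mcX_C}$ with $P$ built from shifts of $R$ and $Y'$ acyclic; then $Y'\in\Cpx{\mcX_C}$ because $\Cpx{\mcX_C}$ is resolving, hence $Y'\in\tilde\mcX_C$ by the dimension bound, so $\Hom_{\Htp R}(Y',X)=0$ by the orthogonality between $\tilde\mcX_C$ and $\tilde\mcF_0$ from \cite{EJv2,St13-ICRA} (note that result needs the cocycles, not merely the components, of the source to lie in $\mcX_C$, which is exactly the stronger hypothesis your lemma tries to avoid), while $\Hom_{\Htp R}(P,X)=0$ since $X$ is acyclic; a componentwise-split short exact sequence of total Hom complexes and two-out-of-three for acyclicity then give $\Hom_{\Htp R}(Z,X)=0$. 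You must bring the uniform bound on $\mcX_C$-resolution dimension (equivalently, the finite injective dimension of $C$) into the argument; the truncation d\'evissage on $W$ alone cannot succeed.
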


\begin{proof}
Everything follows from Propositions~\ref{prop:models} and~\ref{prop:cotilt-cot-pair} except for the description of cofibrations. We need to prove that $Z$ is cofibrant, i.e.\ $\Ext^1_{\Cpx R}(Z,X) = 0$ for all $X \in \tilde\mcF_0$, \iff $Z \in \Cpx{\mcX_C}$, i.e.\ all components of $Z$ are in $\mcX_C$.

Thanks to~\cite[Lemma 3.1(6)]{Gil04} we know that every cofibrant complex $C$ belongs to $\Cpx{\mcX_C}$.

In order to prove the converse, let $Z \in \Cpx{\mcX_C}$. Since any extension
\[ 0 \la X \la E \la Z \la 0 \quad \textrm{ \st } X \in \tilde\mcF_0 \]
splits in every component, it suffices to prove that $\Hom_{\Htp R}(Z,X) = 0$ for every $X \in \tilde\mcF_0$ (see for instance~\cite[Lemma 1.2]{St13-deconstr}).

To see that, suppose that $X \in \tilde\mcF_0$ and consider the complete cotorsion pair $(\mcP,\mcY)$ in $\Cpx R$ generated by the complexes of the form
\[ \Sigma^{-n}R\dd \quad \cdots \la 0 \la 0 \la R \la 0 \la 0 \la \cdots, \]
where $R$ is in cohomological degree $n$ and $n$ runs over all integers. It is easy to see that the closure of this collection under extensions and coproducts contains a generator, so Proposition~\ref{prop:Quillen-cotorsion} applies. Another application of~\cite[Lemma 1.2]{St13-deconstr} shows that every complex in $\mcY$ is acyclic.

Let now $0 \to Y \to P \to Z \to 0$ be an approximation sequence for $Z$ \wrt $(\mcP,\mcY)$. That is, $P \in \mcP$ and $Y \in \mcY$ is acyclic. Since clearly $\mcP \subseteq \Cpx{\mcX_C}$ and $\Cpx{\mcX_C}$ is resolving in $\Cpx R$, it follows that $Y \in \Cpx{\mcX_C}$. An easy dimension shifting argument shows that if $C$ is big $n$-cotilting, then every module $M \in \ModR$ has $\resdim_{\mcX_C} M \le n$. In particular all cocycles of $Y$ are in $\mcX_C$ and so $Y \in \tilde\mcX_C$. Thus
\[ \Hom_{\Htp R}(Y,X) \cong \Ext^1_{\Cpx R}(\Sigma Y,X) = 0 \]
by \cite[Lemmas 1.2]{St13-deconstr} and~\cite[Theorem 7.2.3]{EJv2} or~\cite[Proposition 7.7]{St13-ICRA}. Similarly $\Hom_{\Htp R}(P,X) = 0$.

Let us now apply the functor $\HOM_R(-,X)\dd \Cpx R \to \Cpx\Ab$ to $0 \to Y \to P \to Z \to 0$. Since all components of $Z$ are in $\mcX_C$ and all components of $X$ are in $\mcF_0$, we deduce that the sequence
\[ 0 \la \HOM_R(Z,X) \la \HOM_R(P,X) \la \HOM_R(Y,X) \la 0 \]
is exact in $\Cpx\Ab$. As both $\HOM_R(P,X)$ and $\HOM_R(Y,X)$ are acyclic complexes, so is $\HOM_R(Z,X)$. In particular $\Hom_{\Htp R}(Z,X) = 0$ as desired.
\end{proof}

We conclude the section by a key corollary, which will be used several times in the sequel.

\begin{cor} \label{cor:compute-in-DerR-cotilt}
Let $C$ be a big cotilting $R$-module, $\mcX_C$ be the cotilting class, and let $Z,X \in \Cpx R$ \st $Z \in \Cpx{\mcX_C}$ and $\Ext^1_{\Cpx R}(Z',X) = 0$ for all $Z' \in \tilde\mcX_C$. The last condition is satisfied in particular when $X$ is a bounded below complex with components in $\Prod C$. Then the canonical morphism
\[ \Hom_\HtpG(Z,X) \la \Hom_\DerG(Z,X) \]
is a natural isomorphism.
\end{cor}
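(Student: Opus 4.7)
The strategy is to realise the corollary as a direct instance of Corollary~\ref{cor:compute-in-DerG} applied to the cotilting model structure on $\Cpx R$ from Theorem~\ref{thm:cotilt-models}.

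First I would unpack what being cofibrant and fibrant mean in that model structure. Taking $f\dd 0 \to Z$ in Theorem~\ref{thm:cotilt-models}(1), an object $Z \in \Cpx R$ is cofibrant \iff $Z \in \Cpx{\mcX_C}$, so the first hypothesis makes $Z$ cofibrant. Taking $g\dd X \to 0$ in Theorem~\ref{thm:cotilt-models}(3), $X$ is fibrant \iff $\Ext^1_{\Cpx R}(Z', X) = 0$ for every $Z' \in \tilde\mcX_C$, which is precisely the second hypothesis. Since the cotilting model structure is an instance of Proposition~\ref{prop:models} (via the cotorsion pair from Proposition~\ref{prop:cotilt-cot-pair}), the main assertion is immediate from Corollary~\ref{cor:compute-in-DerG}.

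It only remains to verify the ``in particular'' clause: every bounded below $X \in \Cpx R$ with $X^n \in \Prod C$ for all $n$ is fibrant. Fix $Z' \in \tilde\mcX_C$; since $\mcX_C$ is resolving and thus closed under extensions, the short exact sequences $0 \to Z^n(Z') \to (Z')^n \to Z^{n+1}(Z') \to 0$ coming from the acyclicity of $Z'$ show that each component $(Z')^n$ also lies in $\mcX_C$. Because $\Prod C \subseteq \mcF_0$ by the very definition of $\mcF_0$ together with the Ext-vanishing condition (BC2) of cotiltingness, any short exact sequence $0 \to X \to E \to Z' \to 0$ in $\Cpx R$ splits in every degree, so it suffices to consider termwise split extensions. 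These are classified up to equivalence by $\Hom_{\Htp R}(Z', X[1])$, so the task reduces to showing that every chain map $f\dd Z' \to X[1]$ is null-homotopic. Using that $X^n = 0$ for $n$ sufficiently small, I would construct a contracting homotopy $s^n\dd (Z')^n \to X^n$ by induction on $n$, setting $s^n = 0$ in the vanishing range. At each inductive step the obstruction is to extend a prescribed map defined on the inclusion $Z^{n+1}(Z') \hookrightarrow (Z')^{n+1}$ to the whole of $(Z')^{n+1}$; such an extension exists because $\Ext^1_R(Z^{n+2}(Z'), X^{n+1}) = 0$, which holds since $Z^{n+2}(Z') \in \mcX_C$ and $X^{n+1} \in \Prod C \subseteq \mcF_0$.

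The main technical point is this two-sided lifting for the null-homotopy, which requires carefully managing the chain-map identities degree by degree; the boundedness of $X$ from below provides the essential base case, and the resolving property of $\mcX_C$ from Proposition~\ref{prop:cotilt-cot-pair} supplies the needed $\Ext^1_R$-vanishing at every inductive step.
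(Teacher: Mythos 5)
Your argument for the main assertion is exactly the paper's: read off from Theorem~\ref{thm:cotilt-models} that $Z$ is cofibrant and $X$ is fibrant in the cotilting model structure and invoke Corollary~\ref{cor:compute-in-DerG}. The only real difference is that you also verify the ``in particular'' clause by hand (degreewise splitness plus an inductive construction of a null-homotopy, with the obstructions killed because the cocycles of $Z'$ lie in $\mcX_C$ and the components of $X$ in $\Prod C \subseteq \mcF_0$, using that $X$ is bounded below for the base case); the paper leaves this to the standard facts it cites in the proof of Theorem~\ref{thm:cotilt-models}, and your argument is a correct self-contained substitute. One minor attribution slip: $\Prod C \subseteq \mcF_0$ does not follow from (BC2) alone (that condition gives $\Prod C \subseteq \mcX_C$); it follows from $\mcX_C \cap \mcF_0 = \Prod C$ in Proposition~\ref{prop:cotilt-cot-pair}, or directly from $\Ext^1_R(M,C^I) \cong \Ext^1_R(M,C)^I = 0$ for $M \in \mcX_C$.
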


\begin{proof}
We use the description of the model structure in Theorem~\ref{thm:cotilt-models} to deduce that $Z$ is cofibrant and $X$ is fibrant. The rest follows from Corollary~\ref{cor:compute-in-DerG}.
\end{proof}

\section{Cotilting $t$-structures}
\label{sec:t-str}

Given a cotilting module, we need to construct the corresponding tilted Gro\-then\-dieck category. We will do so using basic results from~\cite{BBD82} on $t$-structures and their hearts. If $n = 1$, one could use as in~\cite{CGM07} the $t$-structures coming from torsion pairs in $\ModR$, originally constructed by Happel, Reiten and Smal\o{} in~\cite{HRS96} (see also~\cite[\S5]{BvdB03} or~\cite{SKT11} for a somewhat generalized version). Here we give another construction for an arbitrary $n$ which recovers the $t$-structures from~\cite{CGM07} as a special case if $n=1$.

For the sake of completeness, let us recall the definition of a $t$-structure. It formally resembles that of a complete cotorsion pair and, indeed, there is a direct connection in some cases (see~\cite[\S3]{SaoSt11}).

\begin{defn} \label{defn:t-str}
Let $\mcT$ be a triangulated category. A \emph{$t$-structure} on $\mcT$ is a pair of full subcategories $(\Dtle0,\Dtge0)$ which satisfies the properties (tS1)--(tS3) below. Here we use the customary notation that $\Dtle n = \Sigma^{-n} \Dtle0$ and $\Dtge n = \Sigma^{-n} \Dtge0$ for each $n \in \bbZ$.

\begin{enumerate}
\item[(tS1)] $\Hom_\mcT(A,B) = 0$ for all $A \in \Dtle0$ and $B \in \Dtge1$.
\item[(tS2)] $\Dtle0 \subseteq \Dtle1$ and $\Dtge0 \supseteq \Dtge1$.
\item[(tS3)] For every object $X \in \mcT$ there exist a triangle
\[ \ttle0 X \la X \la \ttge1 X \la \Sigma \ttle0 X. \]
\end{enumerate}
The \emph{heart} of the $t$-structure is the full subcategory $\mcG = \Dtle0 \cap \Dtge0$ of $\mcT$.
\end{defn}

One easily checks that (tS1) and~(tS2) imply that a triangle as in (tS3) is unique up to a unique isomorphism, hence functorial. Standard and general properties of the heart of a $t$-structure, established in~\cite{BBD82}, are summarized in the following proposition.

\begin{prop} \label{prop:heart}
Let $\mcT$ be a triangulated category and $(\Dtle0,\Dtge0)$ be a $t$-structure with the heart $\mcG = \Dtle0 \cap \Dtge0$. Then:
\begin{enumerate}
\item $\mcG$ is an abelian category which is closed under extensions
  in $\mcT$ (that is, given $X,Z \in \mcG$ and a triangle $X \to Y \to Z
  \to \Sigma X$ in $\mcT$, then $Y \in \mcG$).
\item A sequence $0 \to X \overset{f}\to Y \overset{g}\to Z \to 0$ is
  exact in $\mcG$ \iff there is a triangle $X \overset{f}\to Y
  \overset{g}\to Z \to \Sigma X$ in $\mcT$.
\item There is an isomorphism $\Ext^1_\mcG(X,Y) \cong \Hom_\mcT(X,\Sigma Y)$
  which is functorial in both variables.
\end{enumerate}
\end{prop}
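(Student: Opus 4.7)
The plan is to follow the original approach of Beilinson--Bernstein--Deligne, which systematically leverages the truncation functors coming from (tS3). I would first record the preliminary facts that make everything work: the triangle $\ttle 0 X \to X \to \ttge 1 X \to \Sigma \ttle 0 X$ supplied by (tS3) is unique up to a unique isomorphism because of (tS1), so $\ttle 0$ and $\ttge 1$ become functors which are right, resp.\ left, adjoint to the inclusions $\Dtle 0 \hookrightarrow \mcT$ and $\Dtge 1 \hookrightarrow \mcT$. As an immediate consequence, both $\Dtle 0$ and $\Dtge 0$ are closed under extensions in $\mcT$ (the extension-closure of $\Dtge 0$ follows by applying $\Hom(D,-)$ for $D \in \Dtle{-1}$ to a triangle, invoking (tS1) after suspending; the dual argument covers $\Dtle 0$). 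Intersecting gives closure under extensions of $\mcG$, which is the second half of (1).

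For the abelian structure, given $f \dd X \to Y$ in $\mcG$, I would complete to a triangle $X \to Y \to Z \to \Sigma X$. Extension closure of $\Dtle 0$ applied to this triangle forces $Z \in \Dtle 0$, and applied to its rotation (using $\Sigma X \in \Dtge{-1}$) it forces $Z \in \Dtge{-1}$. Hence $\ttle 0 Z$ and $\Sigma^{-1} \ttge 1 Z$ both lie in $\mcG$, and I would define $\Coker f = \ttle 0 Z$ and $\Ker f = \Sigma^{-1} \ttge 1 Z$; their universal properties in $\mcG$ follow from the adjunctions mentioned above together with (tS1). To conclude that $\mcG$ is abelian one must check the image-coimage comparison is an isomorphism; I would deduce this by applying the octahedral axiom to $X \to Y \to Z$ and $\Sigma^{-1} \ttge 1 Z \to Z \to \ttle 0 Z$ and identifying the resulting object with both $\Img f$ and $\Coim f$. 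This is the step I expect to be the main technical obstacle, since it requires assembling several truncation triangles and verifying they fit together coherently---though it is essentially the BBD argument.

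Part (2) then drops out: if $X \to Y \to Z \to \Sigma X$ is a triangle with all three terms in $\mcG$, then $Z \in \Dtle 0$ already equals its cokernel $\ttle 0 Z$ and $\Sigma^{-1} Z \in \Dtge 0$ makes $\Sigma^{-1} \ttge 1 Z = 0$, so the first map is a monomorphism and the second an epimorphism with kernel equal to $X$ by construction, yielding a short exact sequence; the converse direction records the defining triangle of the cokernel. Finally, for (3), given $X, Y \in \mcG$, I would set up mutually inverse maps between $\Ext^1_\mcG(X,Y)$ and $\Hom_\mcT(X,\Sigma Y)$: an extension $0 \to Y \to E \to X \to 0$ in $\mcG$ yields a triangle $Y \to E \to X \overset{\delta}\to \Sigma Y$ by (2), and we send the class of the extension to $\delta$; conversely, given any $\delta \dd X \to \Sigma Y$, completing to a triangle and using (2) produces an extension in $\mcG$. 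Two extensions give the same connecting morphism iff the middle objects are isomorphic by a map compatible with the inclusion of $Y$ and the projection to $X$, which is exactly the equivalence relation defining $\Ext^1_\mcG$, and functoriality in both variables is immediate from functoriality of the triangle rotation.
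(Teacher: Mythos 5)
Your overall strategy is the right one---the paper itself disposes of this proposition by citing \cite[Th\'eor\`eme 1.3.6 and Proposition 1.2.2]{BBD82}, and what you propose is precisely to reconstruct that argument---but there is a concrete error at the heart of your construction of the abelian structure. For $f\dd X \to Y$ in $\mcG$ with cone triangle $X \to Y \to Z \to \Sigma X$, you correctly deduce $Z \in \Dtle0 \cap \Dtge{-1}$. But then, with the paper's convention that $\ttle0 Z \in \Dtle0$ and $\ttge1 Z \in \Dtge1$, this membership forces $\ttle0 Z \cong Z$ and $\ttge1 Z = 0$. So your proposed cokernel $\ttle0 Z$ is just $Z$ itself, which need not lie in $\mcG$ (its ``$H^{-1}$'' is exactly the kernel of $f$ and is generally nonzero), and your proposed kernel $\Sigma^{-1}\ttge1 Z$ is identically zero; with these definitions every morphism of $\mcG$ would be a monomorphism and the image--coimage comparison cannot even be set up. The truncations are swapped: the correct definitions are $\Coker f = \ttge0 Z$ and $\Ker f = \Sigma^{-1}\ttle{-1} Z$, where one needs the standard fact that the truncation functors preserve the opposite aisle (so that $\ttge0 Z \in \Dtle0 \cap \Dtge0$ and $\ttle{-1}Z \in \Dtle{-1} \cap \Dtge{-1}$), and the octahedron should then be applied to $X \to Y \to Z$ together with the truncation triangle $\ttle{-1}Z \to Z \to \ttge0 Z$, not the triangle you wrote. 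The verification of part (2) inherits the same mix-up, although with the corrected formulas it goes through exactly as you intend ($\ttle{-1}Z = 0$ since $Z \in \Dtge0$ gives that $f$ is monic, and $\ttge0(\Sigma X) = 0$ gives that $g$ is epic with kernel $X$).

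Two smaller points. Your preliminary reductions (functoriality of truncation from (tS1)+(tS3), the adjunctions, and extension-closure of the aisles via the orthogonality characterization $\Dtge0 = \{Y \mid \Hom_\mcT(D,Y)=0 \textrm{ for } D \in \Dtle{-1}\}$) are fine, though the orthogonality characterization itself deserves a line of proof. In part (3), the well-definedness of the map sending an extension to the connecting morphism $\delta$ is not automatic: the third arrow completing $Y \to E \to X$ to a triangle is a priori not unique, and conversely you must check that equivalent extensions yield the same $\delta$ and that the resulting bijection is additive and functorial. This is handled in the standard treatment, but as written it is glossed over.
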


\begin{proof}
(1) is a part of \cite[Th\'eor\`eme 1.3.6]{BBD82}, while (2) quickly follows from \cite[Proposition 1.2.2]{BBD82}. Finally, (3) is an immediate consequence of~(1) and~(2).
\end{proof}

Let us now suppose that we have a big cotilting module $C \in \ModR$. We aim to assign to $C$ a $t$-structure $(\Dtle0_C,\Dtge0_C)$ on $\Der R$ \st the heart is a Grothendieck category, and $C$ is contained in the heart and is an injective cogenerator there.

Suppose for a moment that we have $\mcG$ and an injective cogenerator $W \in \mcG$. Then the canonical $t$-structure on $\DerG$ (see~\cite[Exemples 1.3.2(i)]{BBD82}) whose heart is equivalent to $\mcG$ can be described using $W$ as follows:
\begin{align*}
\Dtle0 &= \{ Z \in \DerG \mid \Hom_\DerG(Z,\Sigma^iW) = 0 \textrm{ for all } i<0 \}, \\
\Dtge0 &= \{ X \in \DerG \mid \Hom_\DerG(X,\Sigma^iW) = 0 \textrm{ for all } i>0 \}.
\end{align*}

As we expect an equivalence between $\Der R$ and $\DerG$ which sends $C$ to an injective cogenerator of $\mcG$ (we shall prove that in Section~\ref{sec:heart}), the following definition makes sense:

\begin{nota} \label{nota:cotilt-t-str}
Let $C \in \ModR$ be a big cotilting module. We denote by $\Dtle0_C$ and $\Dtge0_C$ the full subcategories of $\Der R$ given by
\begin{align*}
\Dtle0_C &= \{ Z \in \Der R \mid \Hom_{\Der R}(Z,\Sigma^iC) = 0 \textrm{ for all } i<0 \}, \\
\Dtge0_C &= \{ X \in \Der R \mid \Hom_{\Der R}(X,\Sigma^iC) = 0 \textrm{ for all } i>0 \}.
\end{align*}
\end{nota}

Of course, we must first prove that $(\Dtle0_C,\Dtge0_C)$ indeed is a $t$-structure. We shall do so using the corresponding cotilting model structure and in particular Corollary~\ref{cor:compute-in-DerR-cotilt}. The following is a key lemma giving alternative descriptions of~$\Dtge0_C$.

\begin{lem} \label{lem:right-aisle-cotilt}
Let $C \in \ModR$ be a big cotilting module, $\mcX_C$ the corresponding cotilting class (Definition~\ref{defn:cotilt-class}), and let $X \in \Der R$ be a complex. Then the following are equivalent:
\begin{enumerate}
\item $X \in \Dtge0_C$ as in Notation~\ref{nota:cotilt-t-str}.
\item $X$ is isomorphic in $\Der R$ to a complex of the form
\[ \cdots \la 0 \la 0 \la X^0 \la X^1 \la X^2 \la \cdots \]
with $X^i \in \mcX_C$ for all $i \ge 0$.
\item $X$ is isomorphic in $\Der R$ to a complex as in \emph{(2)}, but with $X^i \in \Prod C$ for all $i \ge 0$.
\end{enumerate}
\end{lem}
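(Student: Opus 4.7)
I prove the implications (3)$\Rightarrow$(2)$\Rightarrow$(1)$\Rightarrow$(3). The first is immediate from $\Prod C \subseteq \mcX_C$, which holds because $C \in \mcX_C$ by (BC2) and $\mcX_C$ is closed under products and summands (Proposition~\ref{prop:cotilt-cot-pair}). For (2)$\Rightarrow$(1), fix $i > 0$: since $\Sigma^i C$ is a bounded-below complex with a single component in $\Prod C$ sitting in degree $-i$, Corollary~\ref{cor:compute-in-DerR-cotilt} yields $\Hom_{\Der R}(X,\Sigma^i C) = \Hom_{\Htp R}(X,\Sigma^i C)$; but a chain map from a complex supported in degrees $\ge 0$ to one supported in degree $-i < 0$ is zero in every degree, so this Hom vanishes.

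The substantive content is (1)$\Rightarrow$(3), which I split into two steps. For Step~A I claim $H^i(X) = 0$ for all $i<0$. Let $Q = \Hom_\bbZ(R,\bbQ/\bbZ)$, the injective cogenerator of $\ModR$ from Corollary~\ref{cor:inj-cogen-in-D(G)}, and let $0 \to C_n \to \cdots \to C_0 \to Q \to 0$ be the $\Prod C$-resolution from (BC3), so that in $\Der R$ we have $Q \cong C_\bullet$ with $C_\bullet$ concentrated in cohomological degrees $-n,\dots,0$ and components in $\Prod C$. Fix $i < 0$ and put $j = -i > 0$. Since $Q$ is injective, a direct computation with chain maps yields $\Hom_{\Der R}(X,\Sigma^j Q) \cong \Hom_R(H^i(X),Q)$. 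On the other hand, $C_\bullet$ is built by iterated extensions from the single-module complexes $\Sigma^k C_k$ with $C_k \in \Prod C$, so in the triangulated long exact sequences applied to $\Sigma^j C_\bullet$ each contribution has the form $\Hom_{\Der R}(X,\Sigma^{j+k} C_k)$ with $j+k \ge j > 0$; condition~(1), extended to $\Prod C$ by closure under products and summands, forces all these to vanish, so $\Hom_{\Der R}(X,\Sigma^j Q) = 0$. Since $Q$ is a cogenerator of $\ModR$, we conclude $H^i(X) = 0$.

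For Step~B, after Step~A we may assume $X$ is represented by a complex concentrated in non-negative degrees. The key observation is that any $M \in \mcX_C$ admits a $\Prod C$-coresolution of length at most $n$: the $\mcF_0$-approximation sequence $0 \to M \to F \to K \to 0$ from the cotorsion pair $(\mcX_C,\mcF_0)$ has $K \in \mcX_C$, and closure of $\mcX_C$ under extensions forces $F \in \mcX_C \cap \mcF_0 = \Prod C$; iterating on $K$ and invoking (BC1) shows that the $n$-th cosyzygy already lies in $\Prod C$. Combining this with a replacement of $X$ by a quasi-isomorphic complex of $\mcX_C$-objects supported in degrees $\ge 0$ (which exists because $\mcX_C$ is resolving with resolution dimension uniformly bounded by $n$, and because $X$ already has cohomology in non-negative degrees), I assemble the coresolutions into a first-quadrant Cartan--Eilenberg double complex; its totalisation $Y$ is quasi-isomorphic to $X$, has $Y^i \in \Prod C$ for $i \ge 0$ and $Y^i = 0$ for $i < 0$. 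The main obstacle is precisely this last confinement to non-negative degrees: a naive bifibrant replacement in the cotilting model structure of Theorem~\ref{thm:cotilt-models} produces $\Prod C$-components but a priori in arbitrary degrees, and eliminating the negative-degree tail requires combining the cohomological vanishing from Step~A with the bounded injective dimension of $C$ (which keeps the vertical length of the Cartan--Eilenberg coresolution at most $n$), so that the totalisation of a first-quadrant double complex is itself supported in non-negative degrees.
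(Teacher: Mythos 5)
Your implications (3)$\Rightarrow$(2)$\Rightarrow$(1) are fine and agree with the paper, and your Step~A is correct; it is in fact a variant of the endgame of the paper's own argument, which also uses the $\Prod C$-resolution of $Q$ from (BC3) to detect vanishing. The gap is in Step~B. First, your ``key observation'' is false: for the iterated-preenvelope coresolution of $M \in \mcX_C$, the $n$-th cosyzygy lies in $\Prod C = \mcX_C \cap \mcF_0$ precisely when $\Ext^{n+1}_R(X,M)=0$ for all $X \in \mcX_C$, and (BC1) only gives $\Ext^{>n}_R(-,C')=0$ for $C' \in \Prod C$, not for arbitrary $M \in \mcX_C$. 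Concretely, if $C$ is an injective cogenerator (a big $0$-cotilting module), then $\mcX_C = \ModR$ and your claim would say that every $R$-module is injective. (This finiteness is also not what you need: a first-quadrant double complex has finite antidiagonals even when its columns are infinite.) Second, and more seriously, the step ``replace $X$ by a quasi-isomorphic complex with components in $\mcX_C$ concentrated in degrees $\ge 0$'' is exactly the hard content of the lemma, and it cannot follow from the two reasons you give ($H^i(X)=0$ for $i<0$ and bounded $\mcX_C$-resolution dimension). If it did, then feeding the output into your Cartan--Eilenberg step and then into (2)$\Rightarrow$(1) would show that every complex with cohomology concentrated in non-negative degrees lies in $\Dtge0_C$; this fails whenever $C$ is not injective, since a module $M$ with $\Ext^i_R(M,C)\ne 0$ for some $i>0$ has vanishing negative cohomology but $M \notin \Dtge0_C$. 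So hypothesis (1) must enter Step~B beyond the cohomological vanishing of Step~A, and your closing sentence only restates this difficulty (and misplaces it: the problem is the horizontal, negative-degree direction, not the vertical length of the coresolutions) without resolving it.

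The paper supplies precisely the missing mechanism. Starting from a cofibrant replacement $X \in \Cpx{\mcX_C}$ for the cotilting model structure (Theorem~\ref{thm:cotilt-models}), it constructs degree by degree a map $f\dd X \to D$ into a complex $D$ concentrated in degrees $\ge 0$ with $D^i \in \Prod C$, using $\Prod C$-preenvelopes combined with pushouts along the differentials; the universal properties of these pushouts and preenvelopes are what make $\Hom_{\Htp R}(f,\Sigma^jC)$ bijective for \emph{all} $j \in \bbZ$, the nontrivial case being $j \le 0$. Then the mapping cone of $f$ is killed by all $\Sigma^i C$, hence by all $\Sigma^i Q$ via (BC3), hence is acyclic---which is where your Step~A style argument belongs. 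To repair your proof you would need either to adopt this construction or to explain concretely how condition (1) produces a non-negatively graded $\mcX_C$- (or $\Prod C$-) replacement; as written, that step is assumed rather than proved.
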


\begin{proof}
(3) $\implies$ (2) is obvious and (2) $\implies$ (1) is an immediate consequence of Corollary~\ref{cor:compute-in-DerR-cotilt}.

We shall focus on (1) $\implies$ (3). To this end, let $X \in \Dtge0_C$ and, in view of Theorem~\ref{thm:cotilt-models}, we shall without loss of generality assume that $X \in \Cpx{\mcX_C}$. Similarly to~\cite[Lemma I.4.6(i)]{Hart66}, we will inductively construct a map $f\dd X \to D$ of complexes of $R$-modules
\[
\begin{CD}
\cdots @>>>  X^{-2} @>{\dif^{-2}}>> X^{-1} @>{\dif^{-1}}>> X^0 @>{\dif^0}>> X^1 @>{\dif^{1}}>>  X^2  @>>> \cdots  \\
@.           @VVV                    @VVV               @V{f^0}VV        @V{f^1}VV           @V{f^2}VV            \\
\cdots @>>>    0    @>>>               0   @>>>            D^0 @>>>         D^1 @>>>            D^2  @>>> \cdots  \\
\end{CD}
\]
\st $D^i \in \Prod C$ for all $i \ge 0$. We take for $f^0$ the composition of $X^0 \to \Coker \dif^{-1}$ with a $\Prod C$-preenvelope $e^0\dd \Coker \dif^{-1} \to D^0$ (recall Definition~\ref{defn:approx}). Such a preenvelope always exists---we can for instance take the obvious morphism $\Coker \dif^{-1} \to C^{\Hom_R(\Coker \dif^{-1}, C)}$. Having constructed $f^i$ for $i \ge 0$, we denote by $c^i\dd D^i \to C^i$ the cokernel of the map $D^{i-1} \to D^i$ and consider the pushout
\begin{equation} \label{eq:pushout}
\begin{CD}
   X^i @>{\dif^i}>> X^{i+1}     \\
@V{c^if^i}VV        @VV{g^{i+1}}V  \\
   C^i @>{p^i}>>    P^{i+1}.     \\
\end{CD}
\end{equation}
We set $f^{i+1}$ equal to the composition of $g^{i+1}$ with a $\Prod C$-preenvelope $e^{i+1}\dd P^{i+1} \to D^{i+1}$, and the differential $D^i \to D^{i+1}$ is given by the obvious composition
\[ D^i \mapr{c^i} C^i \mapr{p^i} P^{i+1} \mapr{e^{i+1}} D^{i+1}. \]
All the maps used in the inductive step are depicted in the following diagram:
\[
\xymatrix{
X^{i-1} \ar[r]^{\dif^{i-1}} \ar[d]_{f^{i-1}} & X^i \ar[r]^{\dif^i} \ar[d]_{f^i} & X^{i+1} \ar[dd]^(.66){g^{i+1}} \ar[ddr]^{f^{i+1}} \\
D^{i-1} \ar[r] & D^i \ar[d]_{c^i} & \\
& C^i \ar[r]^{p^i} & P^{i+1} \ar[r]^{e^{i+1}} & D^{i+1}.
}
\]

We claim that $\Hom_{\Htp R}(f,\Sigma^jC)$ is an isomorphism for all $j \in \bbZ$. For $j>0$ this is clear from the assumption on $X \in \Dtge0_C$. If $j\le 0$, suppose that we have a morphism $h\colon X \to \Sigma^jC$ in $\Cpx R$. Our task is to prove that
\begin{enumerate}
\item[(a)] $h$ factors through $f\colon X \to D$, and
\item[(b)] if $h$ is null-homotopic, then so is any such factorization $t\colon D \to \Sigma^jC$.
\end{enumerate}

Regarding (a), note that $h$ is given by its $(-j)$-th component $h^{-j}\dd X^{-j} \to C$ and that $h^{-j} \dif^{-j-1} = 0$. Using the pushout property of~\eqref{eq:pushout} for $i = -j-1$, we construct a map $\ell^{-j}\dd P^{-j} \to C$ such that $\ell^{-j} g^{-j} = h^{-j}$ and $\ell^{-j} p^{-j-1} = 0$. We can factor $\ell^{-j}$ further through the $\Prod C$-preenvelope $e^{-j}\dd P^{-j} \to D^{-j}$ to obtain a morphism of $R$-modules $t^{-j}\dd D^{-j} \to C$. It follows that the composition of $t^{-j}$ with the differential $D^{-j-1} \to D^{-j}$ vanishes, so that $t^{-j}$ is the $(-j)$-th component of a homomorphism of complexes $t\dd D \to \Sigma^jC$ satisfying $h = tf$. In particular, $t$ is a factorization of $h$ through $f$.

To prove (b), let us assume that $h$ is nullhomotopic, where we denote the only relevant component of the null-homotopy by $s^{-j+1}\dd X^{-j+1} \to C$. Given a fixed $t\dd D \to \Sigma^jC$ with $h=tf$, we will now show that $t$ is also null-homotopic. Note that in this situation the $(-j)$-th component $t^{-j}\colon D^{-j} \to C$ satisfies $t^{-j}f^{-j} = h^{-j} = s^{-j+1}\dif^{-j}$. Since $t^{-j}$ is a component of a morphism of complexes, its composition with the differential $D^{-j-1} \to D^{-j}$ vanishes and so there is a map $u^{-j}\dd C^{-j} \to C$ \st $t^{-j} = u^{-j} c^{-j}$. Now we use the pushout from~\eqref{eq:pushout} for $i = -j$ to construct a map $v^{-j+1}\dd P^{-j+1} \to C$ \st $v^{-j+1}p^{-j} = u^{-j}$ and $v^{-j+1} g^{-j+1} = s^{-j+1}$. We further factor $v^{-j+1}$ throught the preenvelope $e^{-j+1}\dd P^{-j+1} \to D^{-j+1}$, obtaining a morphism of $R$-modules $\tilde s^{-j+1}\dd D^{-j+1} \to C$. To summarize, we have constructed $\tilde s^{-j+1}\dd D^{-j+1} \to C$ whose composition with the differential $D^{-j} \to D^{-j+1}$ equals $t^{-j}$. This is none other than a null-homotopy of $t\dd D \to \Sigma^jC$, finishing the proof of the claim.

If we now denote by $Z \in \Cpx R$ the mapping cone of $f\dd X \to D$, we have $\Hom_{\Htp R}(Z,\Sigma^iC) = 0$ for all $i \in \bbZ$ and, owing to Corollary~\ref{cor:compute-in-DerR-cotilt}, also
\[ \Hom_{\Der R}(Z, \Sigma^iC) = 0 \quad \textrm{ for all } i \in \bbZ. \]
Let $Q$ be an injective cogenerator for $\ModR$ as in (BC3) of Definition~\ref{defn:big-cotilt}. It follows that $\Hom_{\Der R}(Z, \Sigma^iQ) = 0$ for all $i \in \bbZ$, showing that $Z \cong 0$ in $\Der R$. Thus, $f$ becomes an isomorphism in $\Der R$ and $X$ satisfies (3).
\end{proof}

Now we can prove that we have indeed assigned a $t$-structure to a big cotilting module.

\begin{thm} \label{thm:cotilt-t-structure}
Let $R$ be a ring and $C$ be a big cotilting $R$-module. Then $(\Dtle0_C,\Dtge0_C)$ from Notation~\ref{nota:cotilt-t-str} is a $t$-structure on the derived category $\Der R$.
\end{thm}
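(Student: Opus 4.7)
The plan is to verify the three axioms of Definition~\ref{defn:t-str} in turn, with (tS2) being essentially free, (tS3) constructed by a shifted version of the procedure already developed in Lemma~\ref{lem:right-aisle-cotilt}, and (tS1) obtained by a direct degree-by-degree null-homotopy argument. Axiom (tS2) is immediate from the defining conditions in Notation~\ref{nota:cotilt-t-str}: passing from index~$0$ to~$1$ weakens the vanishing condition for $\Dtle0_C$ and strengthens it for $\Dtge0_C$, so $\Dtle0_C \subseteq \Dtle1_C$ and $\Dtge0_C \supseteq \Dtge1_C$.

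For (tS3), I would take $X \in \Der R$ and, using the cotilting model structure of Theorem~\ref{thm:cotilt-models}, replace it by a cofibrant representative in $\Cpx{\mcX_C}$. The idea is to imitate the construction from the proof of Lemma~\ref{lem:right-aisle-cotilt}, but shift the whole procedure up by one so that it begins in cohomological degree~$1$: put $D^i = 0$ and $f^i = 0$ for $i \le 0$, define $f^1\dd X^1 \to D^1$ as the composition of $X^1 \to \Coker \dif^0_X$ with a $\Prod C$-preenvelope of $\Coker \dif^0_X$, and for $i \ge 2$ iterate the pushout/preenvelope step verbatim as in that lemma's proof. By construction, $D$ has components in $\Prod C$ in all degrees $\ge 1$ and vanishes elsewhere, so Lemma~\ref{lem:right-aisle-cotilt}(3) applied to $\Sigma D$ gives $D \in \Dtge1_C$.

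To show that $\Sigma^{-1}\cone f$ lies in $\Dtle0_C$, I would verify that $\Hom_{\Htp R}(f, \Sigma^j C)$ is bijective for every $j \le -1$ and injective for $j = 0$. Bijectivity for $j \le -1$ follows from the verbatim arguments used in parts (a) and (b) of the cited lemma's proof; they remain valid because the required pushout index $-j-1 \ge 0$ and the preenvelope $e^{-j}$ all lie within the range of our shifted construction. Injectivity at $j=0$ is automatic, since $\Hom_{\Htp R}(D, C) = 0$: any chain map $D \to C$ must vanish in cohomological degree~$0$, where $D$ is zero. Translating these statements to $\Der R$ via Corollary~\ref{cor:compute-in-DerR-cotilt}, the long exact sequence obtained by applying $\Hom_{\Der R}(-, \Sigma^k C)$ to the triangle $X \mapr{f} D \to \cone f \to \Sigma X$ collapses to give $\Hom_{\Der R}(\cone f, \Sigma^k C) = 0$ for all $k \le 0$, which is exactly the condition $\Sigma^{-1}\cone f \in \Dtle0_C$. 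Rotating the triangle then yields the required truncation $\ttle0 X \to X \to \ttge1 X \to \Sigma \ttle0 X$ with $\ttle0 X = \Sigma^{-1}\cone f$ and $\ttge1 X = D$.

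Axiom (tS1) I would handle independently. For $A \in \Dtle0_C$ and $B \in \Dtge1_C$, Lemma~\ref{lem:right-aisle-cotilt} and Theorem~\ref{thm:cotilt-models} allow us to replace $B$ by a complex $D$ concentrated in degrees $\ge 1$ with components in $\Prod C$, and $A$ by a cofibrant $A' \in \Cpx{\mcX_C}$; Corollary~\ref{cor:compute-in-DerR-cotilt} identifies $\Hom_{\Der R}(A, B)$ with $\Hom_{\Htp R}(A', D)$. Given a chain map $f\dd A' \to D$, I would build a null-homotopy $(s^n)$ inductively, taking $s^n = 0$ for $n \le 1$ and, at each subsequent stage $n \ge 1$, observing that the defect $g_n := f^n - \dif_D^{n-1} s^n$ vanishes after precomposition with $\dif_{A'}^{n-1}$ by a direct computation using the chain-map identity for $f$ and the inductive relation on $s^n$. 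Hence $g_n$ determines a chain map $A' \to \Sigma^{-n}D^n$; since $D^n$ is a summand of some $C^I$ and $n \ge 1$, the hypothesis $A \in \Dtle0_C$ together with Corollary~\ref{cor:compute-in-DerR-cotilt} forces this chain map to be null-homotopic, and any such null-homotopy supplies the sought $s^{n+1}$. The main obstacle is the careful reindexing of the Lemma~\ref{lem:right-aisle-cotilt} construction used in (tS3): one must check that the preenvelope/pushout step, tuned in the lemma to produce an object of $\Dtge0_C$ starting at degree~$0$, really does yield the correct truncation once all indices are shifted by one. Once this book-keeping is in place, the rest of the verification is mechanical.
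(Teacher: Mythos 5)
Your proof is correct. For (tS2) and (tS3) it is essentially the paper's argument: the paper also reduces to a cofibrant representative in $\Cpx{\mcX_C}$ and runs the preenvelope/pushout construction of Lemma~\ref{lem:right-aisle-cotilt}, the only cosmetic difference being that the paper builds $D$ starting in degree $0$, obtains $F \in \Dtle{-1}_C$ and $D \in \Dtge0_C$, and then applies the construction to $\Sigma X$ and rotates, whereas you build the shift into the construction from the start; your accounting of what is needed from the long exact sequence (bijectivity of $\Hom_{\Htp R}(f,\Sigma^jC)$ for $j\le-1$ plus injectivity at $j=0$, the latter being automatic since $D^0=0$) is exactly right, and the arguments (a) and (b) of the lemma do go through in the shifted range. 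Where you genuinely diverge is (tS1): the paper replaces $X \in \Dtge1_C$ by a complex with components in $\Prod C$ concentrated in degrees $\ge 1$, proves $\Hom_{\Der R}(Z,X_n)=0=\Hom_{\Der R}(Z,\Sigma\inv X_n)$ for the brutal truncations $X_n$ by induction, and then assembles these via the homotopy limit triangle $\prod_n \Sigma\inv X_n \to X \to \prod_n X_n \to \prod_n X_n$ of B\"okstedt--Neeman, while you instead construct an explicit null-homotopy of a chain map $A' \to D$ degree by degree, using Corollary~\ref{cor:compute-in-DerR-cotilt} and $\Hom_{\Der R}(A,\Sigma^{-n}C)=0$ for $n\ge1$ at each step. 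Both work; your version is more elementary and self-contained (it stays at the chain level and constructs the homotopy outright, so no homotopy limit machinery or tower argument is needed), at the price of requiring the cofibrant replacement of $A$ and the fibrancy of $D$ from Theorem~\ref{thm:cotilt-models}, which the paper's (tS1) argument does not use. One small bookkeeping remark: in your (tS3) the pushout/preenvelope iteration must already start at index $i=1$ (so as to produce $D^2$), not $i=2$; this is clearly what you intend by ``verbatim as in that lemma's proof,'' so it is not a gap.
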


\begin{rem} \label{rem:t-str-comm-noe}
For $R$ commutative noetherian, a method to assign a $t$-structure to a big cotilting module have been recently studied in~\cite{ASa13}; see for instance Corollary~3.5 there. This assignment was obtained indirectly, using two classification results: Compactly generated $t$-structures on $\Der R$ have been classified in~\cite{AJSa10} and cotilting classes corresponding to big cotilting $R$-modules have been classified in~\cite{APST12}. It turns out that the set parameterizing the cotilting classes is a subset of the one parameterizing the $t$-structures. In view of~\cite[Proposition 3.6]{ASa13}, our theorem generalizes this assignment to rings which are not necessarily commutative or noetherian.
\end{rem}

\begin{proof}[Proof of Theorem~\ref{thm:cotilt-t-structure}]
Property (tS2) from Definition~\ref{defn:t-str} is clear from the construction of $\Dtle0_C$ and~$\Dtge0_C$.

In order to prove (tS1), suppose that $Z \in \Dtle0_C$ and $X \in \Dtge1_C$. In view of the previous lemma, we can assume that $X$ is of the form
\[ \cdots \la 0 \la 0 \la X^1 \la X^2 \la \cdots \la X^n \la X^{n+1} \la X^{n+2} \la \cdots \]
with $X^i \in \Prod C$ for all $i > 0$. For every $n \ge 1$, we shall denote by $X_n$ the brutally truncated complex
\[ \cdots \la 0 \la 0 \la X^1 \la X^2 \la \cdots \la X^n \la 0 \la 0 \la \cdots \]

As $\Hom_{\Der R}(Z,\Sigma^iC) = 0$ for all $i<0$, it is straightforward to show by induction on $n$ that
\[ \Hom_{\Der R}(Z,X_n) = 0 = \Hom_{\Der R}(Z,\Sigma\inv X_n) \quad \textrm{ for all } n \ge 1. \]
Now we use~\cite[Remark 2.3]{BN93} to show that there is a triangle in $\Der R$ of the form
\[ \prod_{n \ge 0} \Sigma\inv X_n \la X \la \prod_{n \ge 0} X_n \la \prod_{n \ge 0} X_n. \]
Hence $\Hom_{\Der R}(Z,X) = 0$ and (tS1) holds.

In order to prove (tS3), we start with an arbitrary complex $X \in \Der R$. Using Theorem~\ref{thm:cotilt-models}, we can assume that $X \in \Cpx{\mcX_C}$, where $\mcX_C$ is the cotilting class corresponding to $C$. Now we construct a map $f\dd X \to D$ in $\Cpx R$
\[
\begin{CD}
\cdots @>>>  X^{-2} @>{\dif^{-2}}>> X^{-1} @>{\dif^{-1}}>> X^0 @>{\dif^0}>> X^1 @>{\dif^{-1}}>> X^2  @>>> \cdots  \\
@.           @VVV                    @VVV               @V{f^0}VV        @V{f^1}VV           @V{f^2}VV            \\
\cdots @>>>    0    @>>>               0   @>>>            D^0 @>>>         D^1 @>>>            D^2  @>>> \cdots  \\
\end{CD}
\]
using exactly the same method as in the proof of Lemma~\ref{lem:right-aisle-cotilt}. Since $X$ not necessarily in $\Dtle0_C$, this time we only get that
\[ \Hom_{\Htp R}(f, \Sigma^iC) \quad \textrm{ is an isomorphism for all } i \le 0. \]
In view of Corollary~\ref{cor:compute-in-DerR-cotilt}, the same is true for $\Hom_{\Der R}(f, \Sigma^iC)$. Let $F$ be the fiber of $f$ in $\Der R$. That is, we have a triangle
\[ \Sigma\inv D \la F \la X \mapr{f} D. \]
Then necessarily $\Hom_{\Der R}(F,\Sigma^iC) = 0$ for all $i < 0$. We claim that the group $\Hom_{\Der R}(F,C)$ vanishes as well. Indeed, if we apply $\Hom_{\Der R}(-,C)$ to the triangle above, we get an exact sequence of abelian groups
\[ \Hom_{\Der R}(D,C) \mapr{f'} \Hom_{\Der R}(X,C) \la \Hom_{\Der R}(F,C) \la \Hom_{\Der R}(\Sigma\inv D,C) \]
Now $f'$ is an isomorphism and $\Hom_{\Der R}(\Sigma\inv D,C) = 0$ since $\Sigma\inv D \in \Dtge1_C$ by Lemma~\ref{lem:right-aisle-cotilt}. This proves the claim and in particular we have $F \in \Dtle{-1}_C$ and $D \in \Dtge0_C$. Hence, by constructing a triangle as above for $\Sigma X$ rather than for $X$ and by rotating it correspondingly we get a triangle as in (tS3).
\end{proof}

Now we can give a central definition for the rest of the paper. We will use it to prove a converse of Theorem~\ref{thm:cotilt-from-tilt}.

\begin{defn} \label{defn:tilted-catg}
Let $R$ be a ring and $C \in \ModR$ be a big cotilting module. Then the \emph{tilted abelian category} $\mcG$ corresponding to $C$ is defined to be the heart of the cotilting $t$-structure described in Notation~\ref{nota:cotilt-t-str}. That is,
\[ \mcG = \Dtle0_C \cap \Dtge0_C = \{ X \in \Der R \mid \Hom_{\Der R}(X,\Sigma^iC) = 0 \textrm{ for all } i \ne 0 \}. \]
\end{defn}

\section{Equivalence of derivators}
\label{sec:derivators}

In this section we generalize one of the main results from~\cite{CGM07} and show that, given a big cotilting module $C$, the tilted category $\mcG$ (Definition~\ref{defn:tilted-catg}) is derived equivalent to $\ModR$. In fact, we shall prove more: the diagram categories $(\ModR)^I$ and $\mcG^I$, where $I \in \Cat$ is an arbitrary small category, are also derived equivalent and these equivalences are compatible with the restriction functors induced by morphisms in $\Cat$. Formally this says that the canonical derivators (in the sense of~\cite{CisNee08,Gr12,GrothDerivators,Malt07-derivators}, to be defined) of $R$ and $\mcG$ are equivalent.

\subsection{The $2$-category of prederivators}
\label{subsec:2-cat-prederiv}

The basic definition is formally easy.

\begin{defn} \label{defn:prederiv}
A \emph{prederivator} is a strict $2$-functor $\bbD\dd \Cat\op \to \CAT$. Here, $\Cat$ stands for the $2$-category of all small categories and $\CAT$ for the ``$2$-category'' of all not necessarily small categories. Our convention for $\Cat\op$, following~\cite{Gr12}, is that the direction of $1$-morphisms (functors) is formally inverted and the direction of $2$-morphisms (transformations) is kept. The adjective ``strict'' refers to the fact that $\bbD$ is required to preserve the compositions of $1$-morphisms strictly rather than only up to natural equivalence.
\end{defn}

A derivator will be defined as a prederivator satisfying certain axioms. Before introducing these, we shall discuss how (pre)derivators occur in our setup.

\begin{constr} \label{constr:der-abel}
Let $\mcA$ be an abelian category. We shall construct a prederivator $\bbD_\mcA\dd \Cat\op \to \CAT$ as follows. Given $I \in \Cat$, we shall put $\bbD_\mcA(I) = \Der{\mcA^I}$, the unbounded derived category of the abelian diagram category $\mcA^I$. Note that $\Der{\mcA^I}$ is typically not equivalent to $\Der\mcA^I$, although there always exists a canonical functor
\[ d_I\dd \Der{\mcA^I} \la \Der\mcA^I. \]
We call such a functor a \emph{diagram functor} and refer to~\cite[\S2]{Malt07-derivators} or~\cite[\S1.10]{CisNee08} for a general formal construction.

Given a $1$-morphism $u\dd I \to J$ in $\Cat$, the scalar restriction functor $\mcA^J \to \mcA^I$ is clearly exact, so it induces a functor
\[ u^*\dd \bbD_\mcA(J) \la \bbD_\mcA(I) \]
We shall take this $u^*$ as the value of $\bbD_\mcA(u)$.

Finally, given $u,v\dd I \to J$ in $\Cat$ and a natural transformation $\alpha\dd u \Rightarrow v$, there is an obvious natural transformation
\[ \alpha^*\dd u^* \Longrightarrow v^*, \]
which we take for the value of $\bbD_\mcA(\alpha)$.

If $\mcA = \ModR$ is a module category, we shall simply write $\bbD_R$ instead of $\bbD_{\ModR}$.
\end{constr}

\begin{rem}
In principle, we may run out of our set theoretic universe since $\bbD_\mcA(I)$ may not be locally small, i.e.\ the collections of morphism between some pairs of objects of $\bbD_\mcA(I)$ need not be small. Although such pathologies do exist (cf.~\cite[Exercise 1, p. 131]{Fr64}), the problem does not occur if $\mcA$ is a Grothendieck category.
\end{rem}

The class of prederivators themselves forms a $2$-category. Saying that, we disregard for the moment the set-theoretic problem that the class of all $1$-morphisms between given prederivators $\bbD$ and $\bbD'$ may not be small or, worse, may not be a legal class in our universe. However, this will not cause any problems as we will always consider only particular $1$- or $2$-morphisms rather than the class of all of them.

\begin{defn} \label{defn:2-cat-prederiv}
A \emph{$1$-morphism} (or simply a \emph{morphism}) $F\dd \bbD \to \bbD'$ of prederivators is given by the following data:
\begin{enumerate}
\item A collection of functors $F_I\dd \bbD(I) \to \bbD'(I)$, one for each $I \in \Cat$.
\item A collection of natural equivalences $\gamma_u\dd u^* \circ F_J \Rightarrow F_I \circ u^*$ indexed by $1$-morphisms $u\dd I \to J$ in $\Cat$. That is, we have squares of functors
\[
\begin{CD}
 \bbD(J) @>{F_J}>> \bbD'(J)     \\
@V{u^*}VV          @VV{u^*}V    \\
 \bbD(I) @>{F_I}>> \bbD'(I)     \\
\end{CD}
\]
commutative up to the equivalence $\gamma_u$.
\end{enumerate}

This datum is required to satisfy the following three coherence conditions. Given a pair of composable functors $I \overset{u}\to J \overset{v}\to K$ in $\Cat$ and a natural transformation $\alpha\dd u_1 \Rightarrow u_2$ between $u_1,u_2\dd I \to J$ in $\Cat$, we have the equalities
\begin{enumerate}
\item[(a)] $\gamma_{\id_J} = \id_{F_J}$ as transformations $F_J \Rightarrow F_J$,
\item[(b)] $\gamma_{v \circ u} = \gamma_uv^* \circ u^*\gamma_v$ as transformations $u^* \circ v^* \circ F_K \Rightarrow F_I \circ u^* \circ v^*$, and
\item[(c)] $F_I\alpha^* \circ \gamma_{u_1} = \gamma_{u_2} \circ \alpha^*F_J$ as transformations $u_1^* \circ F_J \Rightarrow F_I \circ u_2^*$.
\end{enumerate}

If $F,G\dd \bbD \to \bbD'$ are two morphisms of prederivators, a \emph{$2$-morphism} (or a \emph{natural transformation}) $\tau\dd F \Rightarrow G$ is collection of natural transformations $\tau_I\dd F_I \to G_I$, where $I$ runs over small categories, satisfying the following compatibility condition. Given a functor $u\dd I \to J$ in $\Cat$, then $\tau_Iu^* \circ \gamma_u^F = \gamma_u^G \circ u^*\tau_J$ as transformations $u^* \circ F_J \Rightarrow G_I \circ u^*$, where the superscripts at $\gamma_u$ distinguish the transformation which is a part of the defining datum of $F$ from the one belonging to $G$.
\end{defn}

\begin{rem}
The definitions here coincide with the definitions of pseudo-natural transformations of $2$-functors and modifications of pseudo-natural transformations; see~\cite[Definitions 7.5.2 and 7.5.3]{Bor94-vol1}.
\end{rem}

Although the definition above may seem complicated, there is a natural class of easy examples relevant for our case.

\begin{expl} \label{expl:exact-functor}
Let $F\dd \mcA \to \mcB$ be an exact functor between abelian categories. Then $F$ induces an exact functor $\mcA^I \to \mcB^I$ for every $I \in \Cat$ and, in turn, a functor $F_I\dd \bbD_\mcA(I) \to \bbD_\mcB(I)$. One easily checks that $u^* \circ F_J = F_I \circ u^*$ for every $1$-morphism $u\dd I \to J$, so the collection $(F_I \mid I \in \Cat)$ together with the identity transformations forms a morphism $\bbD_\mcA \to \bbD_\mcB$ of the corresponding prederivators. Abusing the notation a little, we shall denote this morphism also by $F$.
\end{expl}

As we are interested in equivalences of prederivators, we shall also define precisely what we mean by these.

\begin{defn} \label{defn:equiv-prederiv}
A functor of prederivators $F\dd \bbD \to \bbD'$ is called an \emph{equivalence} if $F_I\dd \bbD(I) \to \bbD(I')$ is a usual equivalence of categories for every $I \in \Cat$.
\end{defn}

\begin{rem} \label{rem:equiv-prederiv}
As prederivators form a $2$-category, one can also define an equivalence internally to the $2$-category of prederivators; see~\cite[Definition 7.1.2]{Bor94-vol1}. It can be checked that this leads to a notion equivalent to Definition~\ref{defn:equiv-prederiv}.
\end{rem}

\subsection{Derivators}
\label{subsec:derivators-def}

Now we turn our attention to derivators. Let us introduce some notations and terminology first. We denote by $\unit$ the terminal category with one object and one morphism and, given $n \in \bbN$, we denote by $[n]$ the totally ordered set $\{0,1,\dots,n\}$ viewed as a small category (so that $[0] \cong \unit$).

Given $I \in \Cat$ and an object $i \in I$, there is a unique functor $\unit \to I$ sending the only object of $\unit$ to $i$. We shall denote this functor also by $i$. Given a prederivator $\bbD$ and a morphism $f\dd X \to Y$ in $\bbD(I)$, we shall write $f_i\dd X_i \to Y_i$ for the image of $f$ under $i^*\dd \bbD(I) \to \bbD(\unit)$.

If $I \in \Cat$, then the objects of $\bbD(I)$ are called \emph{coherent diagrams} of shape $I$, while the usual diagrams $\bbD(\unit)^I$ are called \emph{incoherent diagrams}. As noted in Construction~\ref{constr:der-abel}, there is always a canonical \emph{diagram functor} $d_I\dd \bbD(I) \to \bbD(\unit)^I$ which is, however, usually far from being an equivalence. More generally, if $I,J \in \Cat$, we have a \emph{partial diagram functor} $d_{I,J}\dd \bbD(I\times J) \to \bbD(I)^J$; see~\cite[\S1.10]{CisNee08}.

\begin{defn}[\cite{CisNee08,Gr12,GrothDerivators,Malt07-derivators}] \label{defn:derivator}
A prederivator $\bbD\dd \Cat\op \to \CAT$ is a \emph{derivator} if it satisfies the axioms (Der1)--(Der4) below.
\begin{enumerate}
\item[(Der1)] Given a set $S$ and a collection $(I_s \mid s \in S)$ of small categories, the canonical functor
\[ \bbD\Big(\coprod_{s\in S} I_S\Big) \la \prod_{s \in S} \bbD(X_s) \]
is an equivalence of categories.

\item[(Der2)] Let $I \in \Cat$ and $f\dd X \to Y$ be a morphism in $\bbD(I)$. Then $f$ is an isomorphism \iff $f_i\dd X_i \to Y_i$ is an isomorphism for each $i \in I$.
\item[(Der3)] Given a functor $u\dd I \to J$ in $\Cat$, then $u^*\dd \bbD(J) \to \bbD(I)$ has both a left adjoint $u_!\dd \bbD(I) \to \bbD(J)$ and a right adjoint $u_*\dd \bbD(I) \to \bbD(J)$
\end{enumerate}
If $J = \unit$ and $p\dd I \to \unit$ is the unique functor, it is common to use special notation and terminology. The functor $p_!\dd \bbD(I) \to \bbD(\unit)$ is denoted by $\hocolim_I$ and called the \emph{homotopy colimit} functor, while $p_*$ is denoted by $\holim_I$ and is called the \emph{homotopy limit} functor.
\begin{enumerate}
\item[(Der4)] Given a functor $u\dd I \to J$ in $\Cat$ and objects $X \in \bbD(I)$ and $i \in I$, there are base change formulas which allow to canonically identify $u_!(X)_i$ and $u_*(X)_i$ with homotopy colimits and limits of certain comma categories, respectively. Rather than giving the details here, we refer to~\cite[\S2]{Malt07-derivators}, \cite[Definition 1.11]{CisNee08} or~\cite[Definition 1.5]{Gr12}.
\end{enumerate}

The derivator $\bbD$ is called \emph{strong} if it in addition satisfies
\begin{enumerate}
\item[(Der5)] The partial diagram functors $d_{[1],I}\dd \bbD([1] \times I) \to \bbD(I)^{[1]}$ are full and essentially surjective for all $I \in \Cat$.
\end{enumerate}
\end{defn}

Note that, as a consequence of (Der1), each $\bbD(I)$ has small coproducts and products and, in particular, an initial and a terminal object. When we will speak of morphisms and transformations of (strong) derivators, we shall consider the corresponding full sub-$2$-category of the $2$-category of prederivators (Definition~\ref{defn:2-cat-prederiv}). Note also that a prederivator $\bbD$ which is equivalent to a (strong) derivator in the sense of Definition~\ref{defn:equiv-prederiv} is itself a (strong) derivator.

There are abundance of examples of derivators, owing to the following result.

\begin{prop} \label{prop:models-to-derivators}
Let $\mcA$ together with $(\cof,\we,\fib)$ be a model category. Then there is a strong derivator $\bbD_\mcA$ given as follows: $\bbD(I) = \mcA^I\big[(\we^I)\inv\big]$ for $I \in \Cat$, where $\we^I$ consists of the morphism in $\mcA^I$ which are component wise weak equivalences. If $u\dd I \to J$ is a functor in $\Cat$, then $u^*\dd \bbD_\mcA(J) \to \bbD_\mcA(I)$ is given by the precomposition with $u$ and the action of $\bbD_\mcA$ on natural transformations is given in the obvious way.

In particular, if $\mcG$ is a Grothendieck category, then the prederivator $\bbD_\mcG$ from Construction~\ref{constr:der-abel} is a strong derivator.
\end{prop}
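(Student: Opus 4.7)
The plan is to invoke the standard construction of a derivator from a model category; the full verification of (Der1)--(Der5) in this setup can be found in~\cite[\S1]{CisNee08} and~\cite[Proposition~1.30]{Gr12}. My role is to sketch why the hypotheses needed in those sources are met in our situation.

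The starting observation is that whenever $\mcA^I$ carries a model structure with component-wise weak equivalences as weak equivalences, the localization $\mcA^I[(\we^I)^{-1}]$ is a well-defined category with small hom-sets, obtained as the homotopy category of that model structure by~\cite[Theorem~1.2.10]{Hov99}. When $\mcA$ is a combinatorial model category (equivalently, cofibrantly generated and locally presentable), both the projective and the injective model structures on $\mcA^I$ exist for every small $I$, with cofibrations and fibrations respectively defined component-wise; this is the regime we shall need.

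For (Der1) and (Der2), the component-wise character of weak equivalences is essentially all that is required. For (Der3), the restriction functor $u^*\colon \mcA^J \to \mcA^I$ induced by $u\colon I \to J$ admits a left adjoint $u_!$ (resp. right adjoint $u_*$) given by left (resp. right) Kan extension; the adjunction $(u_!, u^*)$ is Quillen with respect to the projective model structures, and $(u^*, u_*)$ is Quillen with respect to the injective model structures. Deriving these yields the required adjoints on $\bbD_\mcA$. The base change identities of (Der4) are the classical pointwise formulas for homotopy Kan extensions via comma categories, provable by standard model-categorical means. Finally (Der5), the lifting of incoherent morphisms to coherent diagrams on $[1] \times I$, is obtained from a cylinder/path-object factorization in the model structure on $\mcA^{[1]\times I}$; this is carried out in~\cite[Proposition~1.30]{Gr12}. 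The principal obstacle in full generality is to obtain both projective and injective model structures on $\mcA^I$ simultaneously, which is why combinatoriality enters.

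For the ``in particular'' statement, the injective model structure on $\Cpx\mcG$ from Example~\ref{expl:inj-model} comes from the complete cotorsion pair $(\mcG, \mcI)$ which is generated by a small set in the sense of Definition~\ref{defn:cotorsion-set}, hence the model structure is combinatorial. Since $\mcG^I$ is again a Grothendieck category for every $I \in \Cat$, the injective model structure is available on each $\Cpx{\mcG^I} \cong (\Cpx\mcG)^I$, and the general construction applies to yield the strong derivator $\bbD_\mcG$.
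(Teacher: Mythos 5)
Your proposal is correct and takes essentially the same route as the paper: both defer the verification of (Der1)--(Der5) to the literature (Cisinski, resp.\ Groth's \cite[Proposition 1.30]{Gr12} in the combinatorial case) and obtain the Grothendieck-category statement from the fact that the injective model structure on $\Cpx\mcG$ from Example~\ref{expl:inj-model} is combinatorial. One caveat worth noting: for a completely general model category the projective and injective model structures on $\mcA^I$ need not exist, so the first part in full generality rests on Cisinski's theorem \cite{Cis03} rather than on the Kan-extension argument you sketch, which---as you yourself indicate---covers only the combinatorial regime actually needed for the application to $\bbD_\mcG$.
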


\begin{proof}
The first part is due to Cisinski~\cite{Cis03}, while a simpler proof for combinatorial model categories (in the sense of~\cite[\S2]{Dug01}) can be found in~\cite[Proposition 1.30]{Gr12}. The second part follows from Example~\ref{expl:inj-model} which shows that we have an (even combinatorial) model structure on $\Cpx\mcG$.
\end{proof}

However, the derivator $\bbD_\mcG$ for a Grothendieck category $\mcG$ satisfies more properties. We have not reflected yet that all the categories $\bbD_\mcG(I)$ are triangulated (and in particular additive). This leads to the notion of a stable derivator.

\begin{defn}[\cite{Gr12}] \label{defn:stable-deriv}
A derivator $\bbD$ is \emph{pointed} if:
\begin{enumerate}
\item[(Der6)] The category $\bbD(\unit)$ is a pointed category (i.e.\ the initial and terminal objects are isomorphic).
\end{enumerate}

A pointed derivator is called \emph{stable} if:
\begin{enumerate}
\item[(Der7)] The full subcategory of $\bbD([1] \times [1])$ consisting of cartesian squares in the sense of~\cite[Definition 3.9]{Gr12} coincides with the full subcategory of cocartesian squares.
\end{enumerate}
\end{defn}

The following result is crucial for us.

\newpage
\begin{prop} \label{prop:stable-deriv-Grothendieck} ~
\begin{enumerate}
\item Let $\mcG$ be a Grothendieck category and $\bbD_\mcG$ the corresponding strong derivator from Construction~\ref{constr:der-abel}. Then $\bbD_\mcG$ is stable.

\item If $\bbD$ is a strong stable derivator, then $\bbD(I)$ is canonically a triangulated category for each $I \in \Cat$ and $u_!,u^*,u_*$ are triangulated functors for each functor $u\dd I \to J$ in $\Cat$.

\item If $F\dd \bbD \to \bbD'$ is an equivalence of prederivators and one of $\bbD,\bbD'$ is a strong stable derivator, then so is the other and all the component functors $F_I$, $I \in \Cat$, are triangle equivalences.
\end{enumerate}
\end{prop}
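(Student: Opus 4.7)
My plan is to handle the three parts largely by citing the established theory of stable derivators (primarily Groth~\cite{Gr12} and Cisinski~\cite{Cis03}) and to verify only those points that are specific to the Grothendieck setting at hand. All three statements are standard in derivator theory, so the proof is essentially a matter of identifying the right reference for each and checking that the relevant hypotheses are met.

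For part~(1), I would first note that $\bbD_\mcG$ is pointed, since $\bbD_\mcG(\unit) = \Der\mcG$ has a zero object. For stability, I would use that the injective model structure on $\Cpx\mcG$ of Example~\ref{expl:inj-model} is a stable combinatorial model category and so its associated derivator, which is $\bbD_\mcG$ by Proposition~\ref{prop:models-to-derivators}, is stable by~\cite{Cis03} (or~\cite[\S5]{GrothDerivators}). A direct verification is also possible: on $\bbD_\mcG([1]\times[1]) = \Der(\mcG^{[1]\times[1]})$ a square is cartesian (respectively cocartesian) in the sense of~\cite[Definition 3.9]{Gr12} precisely when it is a homotopy pullback (respectively homotopy pushout) in the triangulated category $\Der\mcG$, and the two notions coincide there, because the cofiber of the diagonal and the fiber of the antidiagonal determine each other through the octahedral axiom.

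For part~(2), this is one of the central results in the theory of stable derivators and I would reference \cite[\S4, especially Corollary 4.14]{Gr12} for the canonical triangulated structure on each $\bbD(I)$, together with~\cite[\S7]{Gr12} (or~\cite[Theorem 3.5.14]{GrothDerivators}) for the fact that the exact functors $u^*$, $u_!$ and $u_*$ are all triangulated. The conceptual content is that, on any strong stable derivator, the suspension functor $\Sigma\dd \bbD(I) \to \bbD(I)$ defined from a cofiber square is an equivalence, and distinguished triangles are recovered as the cofiber sequences living in $\bbD([1]\times[1]\times I)$; the functoriality of $u^*$, $u_!$ and $u_*$ with respect to this structure is automatic from the base change axiom~(Der4).

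For part~(3), the argument is a transport of structure along the equivalence $F$. The axioms~(Der1)--(Der5) are each formulated purely in terms of the prederivator structure and are clearly preserved by a level-wise equivalence compatible with the restriction functors via the coherent isomorphisms $\gamma_u$; in particular $F_\unit$ is an equivalence so~(Der6) transports, and cartesian and cocartesian squares in $\bbD([1]\times[1])$ correspond under $F_{[1]\times[1]}$, giving~(Der7). Finally, part~(2) then supplies a canonical triangulated structure on both sides, and since the $F_I$ commute up to the $\gamma_u$ with $u^*$, $u_!$ and $u_*$, they send cofiber sequences to cofiber sequences and are therefore triangle equivalences. The main obstacle here is bookkeeping rather than mathematics: one must check that all the coherence data $\gamma_u$ of $F$ interact correctly with the base change isomorphisms from~(Der4) and with the adjoint functors $u_!$ and $u_*$ (which $F$ preserves only up to the canonical mate transformations). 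This diagram chase is straightforward but tedious, and is the only place where the proof is not a one-line citation.
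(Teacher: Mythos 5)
Your proposal is correct and follows essentially the same route as the paper, which likewise disposes of all three parts by citation to the literature: parts (1) and (2) via Maltsiniotis's announcement and Groth's proofs (\cite[Theorem 4.16 and Corollary 4.19]{Gr12}), and part (3) via \cite[Proposition 4.18]{Gr12}, whose content is exactly the transport-of-structure argument you sketch. The only differences are in which specific results of \cite{Gr12}/\cite{Cis03} you point to and the optional direct verifications you add, which do not change the substance.
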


\begin{proof}
Parts (1) and (2) were announced by Maltsiniotis~\cite{Malt07-derivators} and a full proof was given by Groth in~\cite[Theorem 4.16 and Corollary 4.19]{Gr12}. Part (3) follows from \cite[Proposition 4.18]{Gr12}.
\end{proof}

\subsection{Blueprints for derivators}
\label{subsec:blueprints}

Given a Grothendieck category $\mcG$, it turns out that the stable derivator $\bbD_\mcG$ from Construction~\ref{constr:der-abel} is often fully determined by a suitable extension closed full subcategory of $\mcG$. This will allow us to compare $\bbD_R$ and $\bbD_\mcG$, where $R$ is a ring and $\mcG$ is the tilted abelian category with respect to a big cotilting $R$-module.

In order to formalize this observation, we will briefly recall exact categories. This concept is originally due to Quillen, but the common reference is~\cite[Appendix A]{Kel90} and an extensive treatment is given in~\cite{Bu10}. An \emph{exact category} is an additive category $\mcE$ together with a distinguished class of diagrams of the form
\[ 0 \la X \mapr{i} Y \mapr{d} Z \la 0, \]
called \emph{conflations}, satisfying certain axioms which make conflations behave similar to short exact sequences in an abelian category and allow to define Yoneda Ext groups with usual properties. Adopting the terminology from~\cite{Kel90}, the map in a conflation denoted by $i$ above is called \emph{inflation} while $d$ is referred to as \emph{deflation}. Morally, an exact category is an extension closed subcategory of an abelian category, which is made precise in the following statement.

\begin{prop}[\cite{Kel90,Bu10}] \label{prop:exact-categories} ~
\begin{enumerate}
\item Let $\mcA$ be an abelian category and $\mcE \subseteq \mcA$ be an extension closed subcategory. Then $\mcE$, considered together with all short exact sequences in $\mcA$ whose all terms belong to $\mcE$, is an exact category.
\item Every small exact category arises up to equivalence as an extension closed subcategory of an abelian category in the sense of~(1).
\end{enumerate}
\end{prop}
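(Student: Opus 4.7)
The plan is to treat the two parts separately; both are classical, so I aim for a proof sketch rather than a full verification.

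For part (1), I would directly check Quillen's axioms for the exact structure on $\mcE$ whose conflations are the short exact sequences of $\mcA$ with all three terms in $\mcE$. The identity axioms, isomorphism invariance, and the mere existence of pushouts/pullbacks in $\mcA$ are inherited for free. The hypothesis of extension-closure enters essentially at three places. First, closure under pushouts of inflations: if $i\dd X \to Y$ is an inflation with cokernel $Z \in \mcE$ and $f\dd X \to X'$ is any morphism in $\mcE$, the pushout $Y'$ computed in $\mcA$ fits in a short exact sequence $0 \to X' \to Y' \to Z \to 0$, so $Y' \in \mcE$; pullbacks of deflations are dual. Second, closure under composition of inflations $X \to Y \to Y'$ with cokernels $Z, Z'$ in $\mcE$: the snake lemma in $\mcA$ produces a short exact sequence $0 \to Z \to Z'' \to Z' \to 0$ for the cokernel $Z''$ of the composition, so $Z'' \in \mcE$; again deflations are dual. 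The remaining axioms are then purely formal.

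For part (2), I would reproduce the Gabriel--Quillen embedding. Since $\mcE$ is small, form the full subcategory $\mcA \subseteq \mathrm{Fun}(\mcE\op, \Ab)$ of additive functors $F$ sending every conflation $0 \to X \to Y \to Z \to 0$ in $\mcE$ to a left-exact sequence $0 \to F(Z) \to F(Y) \to F(X)$. A standard argument exhibits $\mcA$ as a Giraud subcategory of the presheaf category, hence a Grothendieck category. The Yoneda functor $h\dd \mcE \to \mcA$, $X \mapsto \Hom_\mcE(-,X)$, is fully faithful and lands in $\mcA$ since in a conflation the inflation is the kernel of the deflation. One then checks that $h$ carries conflations in $\mcE$ to short exact sequences in $\mcA$ (right exactness requires the sheafification-type localization built into the definition of $\mcA$) and, conversely, reflects exactness.

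The remaining and most delicate step is showing that $h(\mcE)$ is extension-closed in $\mcA$: given a short exact sequence $0 \to h(X) \to F \to h(Z) \to 0$, one must realize $F$ as $h(Y)$ for some conflation $0 \to X \to Y \to Z \to 0$ in $\mcE$. I would obtain $Y$ by comparing the extension class in $\Ext^1_\mcA(h(Z), h(X))$ with the Yoneda $\Ext^1$-group of $\mcE$ computed via equivalence classes of conflations; the key point is that the definition of $\mcA$ was tailored precisely so that these two Ext groups coincide. I expect this comparison---and the verification that the resulting object genuinely lies in $\mcE$ rather than only in its ind-completion---to be the main obstacle, with everything else amounting to bookkeeping around the Yoneda embedding.
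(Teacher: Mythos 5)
The paper offers no proof of this proposition at all — it is quoted from Keller~\cite{Kel90} and B\"uhler~\cite{Bu10} — and your sketch reproduces exactly the standard arguments of those references: a direct verification of the exact-category axioms, with extension-closure entering precisely at pushouts/pullbacks and compositions of inflations/deflations, for part~(1), and the Gabriel--Quillen embedding of $\mcE$ into the category of left exact functors on $\mcE\op$ for part~(2), with the extension-closure of the Yoneda image correctly singled out as the only non-formal step (handled in the cited sources via the comparison of $\Ext^1$ in $\mcE$ with $\Ext^1$ in the localization, as you indicate). So your proposal is correct as a sketch and follows the same route as the literature the paper relies on.
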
 

In particular, given an exact category we can consider its derived category $\Der\mcE$; see~\cite[\S10]{Bu10}. Moreover, given $I \in \Cat$, the diagram category $\mcE^I$ is naturally an exact category with conflations defined component wise. Thus, ignoring possible set theoretic issues, attached to an exact category $\mcE$ we have a prederivator $\bbD_\mcE$ defined analogously to the prederivator of an abelian category in Construction~\ref{constr:der-abel}. That is, $\bbD_\mcE(I) = \Der{\mcE^I}$.

Further, given an inclusion $\mcE \subseteq \mcA$ as in Proposition~\ref{prop:exact-categories}(1), we obtain similarly as in Example~\ref{expl:exact-functor} an induced morphism of prederivators $F\dd \bbD_\mcE \to \bbD_\mcA$. Generalizing the results from~\cite{BvdB03,SKT11}, we establish a criterion for $F$ being an equivalence.

\begin{prop} \label{prop:blueprint}
Let $\mcA$ be an abelian category and $\mcE \subseteq \mcA$ be a full subcategory \st
\begin{enumerate}
\item $\mcE$ is resolving (in particular $\mcE$ is extension closed, see Definition~\ref{defn:resolving}) and
\item there is an integer $n\ge 0$ \st each $A \in \mcA$ has $\resdim_\mcE A \le n$.
\end{enumerate}
Then, viewing $\mcE$ as an exact category as in Proposition~\ref{prop:exact-categories}(1), the induced functor $\Der\mcE \to \Der\mcA$ is a triangle equivalence.

If, moreover, $\mcE$ is functorially resolving in $\mcA$, then the induced morphism of prederivators $F\dd \bbD_\mcE \to \bbD_\mcA$ is an equivalence.
\end{prop}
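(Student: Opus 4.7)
\emph{Plan.} I would construct an explicit quasi-inverse $G\dd \Cpx\mcA \to \Cpx\mcE$ via a Cartan--Eilenberg style $\mcE$-resolution, exploiting the uniform bound $n$ on $\mcE$-resolution dimension; the prederivator statement then transports formally to diagram categories. Before that, I would check that the inclusion $\mcE \hookrightarrow \mcA$ induces a well-defined triangle functor $F\dd \Der\mcE \to \Der\mcA$. It suffices to verify that a complex $Y \in \Cpx\mcE$ is acyclic in the exact category $\mcE$ if and only if it is acyclic in $\mcA$. The forward direction is trivial; for the converse, assume $Y$ is $\mcA$-acyclic, fix $i$, and extract the exact sequence
\[ 0 \la Z^i \la Y^i \la Y^{i+1} \la \cdots \la Y^{i+n-1} \la Z^{i+n} \la 0, \]
which is a length-$n$ $\mcE$-resolution of $Z^{i+n}$. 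Since $\resdim_\mcE Z^{i+n} \le n$ by~(2), Proposition~\ref{prop:res-dim}(1) forces $Z^i \in \mcE$, so every cocycle of $Y$ lies in $\mcE$.

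Next I would fix, for every $A \in \mcA$, an $\mcE$-resolution $P_\bullet(A) \to A$ of length $\le n$. Given $X \in \Cpx\mcA$, a standard horseshoe-type argument produces a bicomplex $P_{\bullet,\bullet}$ whose $k$-th column is $P_\bullet(X^k) \to X^k$, together with a morphism of bicomplexes $P_{\bullet,\bullet} \to X$ (placing $X$ in column degree zero). Set $GX = \mathrm{Tot}(P_{\bullet,\bullet})$; since each column has length at most $n+1$, the total complex is a \emph{finite} direct sum in every cohomological degree, so $GX \in \Cpx\mcE$ is well defined even when $X$ is unbounded in either direction, and a diagonal filtration shows that $GX \to X$ is a quasi-isomorphism. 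This gives essential surjectivity of $F$. Full faithfulness follows by a calculus-of-fractions argument: a morphism $X \to Y$ in $\Der\mcA$ with $X,Y \in \Cpx\mcE$ is represented by a roof $X \leftarrow Z \to Y$ with $Z \in \Cpx\mcA$; precomposing both legs with the quasi-isomorphism $GZ \to Z$ yields an equivalent roof $X \leftarrow GZ \to Y$ entirely in $\Cpx\mcE$, representing the same morphism in $\Der\mcE$, and an analogous refinement argument shows that two such representatives equal in $\Der\mcA$ coincide in $\Der\mcE$.

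For the prederivator part, the functorial hypothesis propagates to diagram categories: for each $I \in \Cat$, the subcategory $\mcE^I \subseteq \mcA^I$ is functorially resolving (apply the functorial $\mcE$-precover component-wise, which is natural in its argument), and the resolution dimension of every object of $\mcA^I$ is still bounded by $n$ (assemble the component-wise resolutions into a resolution in $\mcA^I$). The first part therefore supplies a triangle equivalence $F_I\dd \bbD_\mcE(I) \to \bbD_\mcA(I)$ for each $I$. Since $F$ is induced by the plain inclusion $\mcE \hookrightarrow \mcA$, it commutes strictly with every restriction functor $u^*\dd \mcA^J \to \mcA^I$, so the structure equivalences $\gamma_u$ of Definition~\ref{defn:2-cat-prederiv} can be taken to be identities and all coherence conditions hold trivially. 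The main obstacle is the Cartan--Eilenberg step in the unbounded setting: one must verify that the horseshoe-type lifting is available in a general exact category $\mcE$ (using only closure under extensions) and that truncating each column at length $n$ preserves quasi-isomorphicity. The uniform bound in~(2) is indispensable here; without it, the totalization would have infinite direct sums in each degree and the whole argument would break down.
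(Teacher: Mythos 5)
Your reduction of the problem to two facts---(i) a complex in $\Cpx{\mcE}$ is $\mcE$-acyclic iff it is $\mcA$-acyclic, and (ii) every $X \in \Cpx\mcA$ receives a quasi-isomorphism from a complex in $\Cpx{\mcE}$---is exactly the paper's strategy, and your proof of (i) via Proposition~\ref{prop:res-dim}(1) is correct, as is the reduction of the prederivator statement to the first part applied to $\mcE^I \subseteq \mcA^I$. The genuine gap is in your construction of $GX$ establishing (ii). To build a bicomplex whose $k$-th column is the \emph{pre-chosen} resolution $P_\bullet(X^k)$ you must lift each differential $\dif^k$ of $X$ to a chain map $P_\bullet(X^k) \to P_\bullet(X^{k+1})$, i.e.\ factor $P_0(X^k) \to X^{k+1}$ through the chosen epimorphism $P_0(X^{k+1}) \to X^{k+1}$. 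This comparison property (and likewise the key step of the horseshoe lemma) is available when the objects of $\mcE$ are projective, or when the resolutions are assembled from $\mcE$-precovers, but a resolving subcategory in the sense of Definition~\ref{defn:resolving} carries no such lifting hypothesis---being generating and closed under extensions, kernels of epimorphisms and retracts does not provide it. Your own caveat that the ``horseshoe-type lifting'' must be checked ``using only closure under extensions'' cannot be discharged; and even if liftings existed, they would make the squares commute only up to homotopy, whereas a bicomplex requires strict commutation. So the Cartan--Eilenberg step, which carries the whole essential-surjectivity argument, fails as stated.

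The paper avoids this by never resolving the components separately: following the dual of \cite[Lemma~I.4.6(2)]{Hart66} (as in \cite[Lemma~5.4.2]{BvdB03}), one constructs $Y \to X$ degree by degree, at each stage choosing an epimorphism from an object of $\mcE$ onto a suitable pullback---this uses only that $\mcE$ is generating---while Proposition~\ref{prop:res-dim}(1) together with the uniform bound $n$ guarantees the construction can be truncated after $n$ steps with kernels still in $\mcE$; compare the analogous constructions in Lemmas~\ref{lem:way-out}(1) and~\ref{lem:right-aisle-cotilt}. If you prefer a totalization argument, a legitimate variant is: choose a degreewise epimorphism $P_0 \to X$ where $P_0$ is a direct sum of contractible ``disk'' complexes with components in $\mcE$, iterate on the kernels to get a degreewise exact sequence $0 \to K_n \to P_{n-1} \to \cdots \to P_0 \to X \to 0$ of complexes, note that each component of $K_n$ is an $n$-th $\mcE$-syzygy and hence lies in $\mcE$ by Proposition~\ref{prop:res-dim}(1), and totalize this finite resolution; no comparison maps between fixed resolutions are ever needed. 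With (ii) repaired in this way, your d\'evissage for full faithfulness and your treatment of $\mcE^I$ (functoriality makes $\mcE^I$ generating in $\mcA^I$ and the bound $n$ persists) go through and agree with the paper's proof.
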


\begin{rem} \label{rem:blueprint-dual}
It will follow from the proof that the same conclusion holds under formally dual assumptions on $\mcE$, that is if $\mcE$ is (functorially) coresolving in $\mcA$ and the $\mcE$-coresolution dimension of objects of $\mcA$ is uniformly bounded.
\end{rem}

\begin{proof}
The first part follows by the same argument as~\cite[Lemma 5.4.2]{BvdB03}. Namely, we use Proposition~\ref{prop:res-dim}(1) to observe that $\mcE$ satisfies the assumptions for the dual version of~\cite[Lemma I.4.6(2)]{Hart66}. Thus, every complex $X \in \Cpx\mcA$ admits a surjective quasi-isomorphism $f\dd Y \to X$ with $Y \in \Cpx\mcE$. Moreover, the bound on $\mcE$-resolution dimension implies that a complex $X \in \Cpx\mcE$ is acyclic in the sense of~\cite[\S10]{Bu10} \iff it is acyclic as a complex in $\Cpx\mcA$. Thus, $\Der\mcE \to \Der\mcA$ is a triangle equivalence exactly as in~\cite[Lemma 5.4.2]{BvdB03}. A little more detailed argument for the latter step is provided in the proof of~\cite[Lemma 3.8]{SKT11}.

For the second part, note that if $\mcE$ is functorially generating in $\mcA$, then obviously $\mcE^I$ is generating in $\mcA^I$ for every $I \in \Cat$. In particular, $\mcE^I$ satisfies conditions (1) and (2) from the statement as a full subcategory of $\mcA^I$, and so the induced functor $\Der{\mcE^I} \to \Der{\mcA^I}$ is an equivalence. Hence the morphism of prederivators $F\dd \bbD_\mcE \to \bbD_\mcA$ is an equivalence by Definition~\ref{defn:equiv-prederiv}.
\end{proof}

This motivates the following definition.

\begin{defn} \label{defn:blueprint}
Let $\mcA$ be an abelian category and let $\bbD_\mcA$ be the prederivator as in Construction~\ref{constr:der-abel}. A full subcategory $\mcE \subseteq \mcA$ which is functorially resolving and \st the $\mcE$-resolution dimension of objects of $\mcA$ is bounded by some $n \in \bbN$, is called a \emph{blueprint} for $\bbD_\mcA$. If $\mcE$ is functorially coresolving and the $\mcE$-coresolution dimension is uniformly bounded, $\mcE$ is called the \emph{dual blueprint} for $\bbD_\mcA$.
\end{defn}

\subsection{The cotilting equivalence of derivators}
\label{subsec:cotilt-equiv}

Finally we will prove the promised equivalence of derivators induced by a big cotilting module. In retrospection the equivalence will follow from the Quillen equivalence in \S\ref{subsec:QE}; we refer to~\cite[Example 2.10]{Gr12} and also to~\cite{Ren09} for a conceptual explanation. However, we need a triangle equivalence between $\Der R$ and $\Der\mcG$ to prove that $\mcG$ is a Grothendieck category with a classical tilting object in the first place, and the equivalence of the corresponding derivators comes almost for free.

Let $R$ be a ring, $C \in \ModR$ a big cotilting module and $\mcG$ the tilted abelian category in the sense of Definition~\ref{defn:tilted-catg}. One easily observes that $\mcX_C$, the associated cotilting class (Definition~\ref{defn:cotilt-class}), is an extension closed full subcategory of both $\ModR$ and $\mcG$ and that $\mcG \cap \ModR = \mcX_C$.

We shall prove that $\mcX_C$ is a blueprint for $\bbD_R$ and a dual blueprint for $\bbD_\mcG$. The first part is rather well known.

\begin{prop} \label{prop:X_C-blueprint}
Let $R$, $C$ and $\mcX_C \subseteq \ModR$ be as above. Then $\mcX_C$ is a blueprint for $\bbD_R$. In particular, $\bbD_{\mcX_C}$ is a strong stable derivator equivalent to $\bbD_R$.
\end{prop}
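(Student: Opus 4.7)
The plan is to verify the two conditions in Definition~\ref{defn:blueprint} (functorial resolving and a uniform bound on resolution dimension) and then invoke Proposition~\ref{prop:blueprint} to get the equivalence of prederivators. The stability and strongness of $\bbD_{\mcX_C}$ will then transfer through the equivalence.

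First I would recall from Proposition~\ref{prop:cotilt-cot-pair} that $\mcX_C$ is already functorially resolving in $\ModR$, so that hypothesis is available for free. It remains to establish the uniform bound on resolution dimension: if $C$ is big $n$-cotilting, then $\resdim_{\mcX_C} M \le n$ for every $M \in \ModR$. This is a standard dimension-shifting argument using the cotorsion pair $(\mcX_C,\mcF_0)$ of Proposition~\ref{prop:cotilt-cot-pair}. Given $M$, I would iteratively take approximation short exact sequences
\[ 0 \la F_k \la X_k \la F_{k-1} \la 0, \qquad X_k \in \mcX_C, \ F_k \in \mcF_0, \]
with $F_{-1} := M$, for $k = 0, 1, \dots, n-1$. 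Applying $\Ext^*_R(-,C)$ and using (BC1) (so that $\Ext^{>n}_R(-,C) \equiv 0$), dimension shifting gives $\Ext^i_R(F_{n-1},C) \cong \Ext^{i+n}_R(M,C) = 0$ for every $i \ge 1$, hence $F_{n-1} \in \mcX_C$. Splicing the approximation sequences then produces a resolution of length $n$ by objects of $\mcX_C$, as desired.

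Having checked both conditions, Definition~\ref{defn:blueprint} shows that $\mcX_C$ is a blueprint for $\bbD_R$, and Proposition~\ref{prop:blueprint} produces an equivalence of prederivators $F\dd \bbD_{\mcX_C} \to \bbD_R$. For the ``in particular'' statement, I note that $\bbD_R$ is a strong stable derivator by Proposition~\ref{prop:stable-deriv-Grothendieck}(1) applied to the Grothendieck category $\ModR$, together with Proposition~\ref{prop:models-to-derivators}. By Proposition~\ref{prop:stable-deriv-Grothendieck}(3), any prederivator equivalent to a strong stable derivator is itself strong and stable, so $\bbD_{\mcX_C}$ inherits the structure and $F$ is component-wise a triangle equivalence.

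I do not expect any serious obstacle here: functorial resolving is already in the literature and recorded in Proposition~\ref{prop:cotilt-cot-pair}, the uniform bound is the dimension shifting remarked in the proof of Theorem~\ref{thm:cotilt-models}, and the passage to prederivators is purely formal once Proposition~\ref{prop:blueprint} is in place. The only mildly delicate point is making sure the approximation sequences can be iterated to obtain the resolution; this is automatic from completeness of the cotorsion pair $(\mcX_C,\mcF_0)$ and closure of $\mcX_C$ under kernels of epimorphisms (which is part of being resolving, Definition~\ref{defn:resolving}(R1)).
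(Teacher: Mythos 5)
Your proposal is correct and follows essentially the same route as the paper: functorial resolving comes from Proposition~\ref{prop:cotilt-cot-pair}, the uniform bound $\resdim_{\mcX_C} M \le n$ comes from dimension shifting (the paper cites this via the bounded injective dimension of $\mcF_0$, while you shift directly against $C$ using (BC1) — a minor variant of the same argument), and the conclusion follows from Propositions~\ref{prop:blueprint} and~\ref{prop:stable-deriv-Grothendieck}.
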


\begin{proof}
The class $\mcX_C$ is functorially resolving by Proposition~\ref{prop:cotilt-cot-pair}. The bound on resolution dimension of $R$-modules \wrt $\mcX_C$ follows by dimension shifting from the fact that the injective dimension of modules in the class $\mcF_0$ of the complete cotorsion pair $(\mcX_C,\mcF_0)$ is bounded by the injective dimension of $C$; see~\cite[Lemma 8.1.4(a)]{GT06}. This argument can be traced back to~\cite[Theorem 5.4]{AR91}.

Hence $\mcX_C$ is a blueprint for $\bbD_R$ and the last statement follows from Propositions~\ref{prop:blueprint} and~\ref{prop:stable-deriv-Grothendieck}.
\end{proof}

The proof that $\mcX_C$ is a dual blueprint for $\bbD_\mcG$ is more laborious.

\begin{lem} \label{lem:X_C-coresolving}
Let $R$, $C$, $\mcX_C$ and $\mcG$ be as above. Then both $\mcX_C$ and $\Prod C$ are coresolving in $\mcG$.
\end{lem}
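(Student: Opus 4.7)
The plan is to verify the three axioms dual to Definition \ref{defn:resolving} for each of $\Prod C$ and $\mcX_C$ as subcategories of $\mcG$. I would begin with the immediate observations: $\Prod C \subseteq \mcX_C \subseteq \mcG$, where the left inclusion uses (BC2) on $C^I$ together with closure of $\mcX_C$ under summands, and the right inclusion is $\mcG \cap \ModR = \mcX_C$ as already noted. Both classes are closed under retracts in $\mcG$, since retracts of modules in $\Der R$ remain modules (cohomology splits) and $\Ext^i_R(-, C)$ preserves summands. Moreover, every object of $\Prod C$ is injective in $\mcG$: Proposition \ref{prop:heart}(3) gives $\Ext^i_\mcG(X, P) \cong \Hom_{\Der R}(X, \Sigma^i P)$, which vanishes for $X \in \mcG \subseteq \Dtge0_C$, $P \in \Prod C$ and $i > 0$, using that $\Hom_{\Der R}(X, \Sigma^i(-))$ carries $C^I$ to a product of zero groups.

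For cogeneration, I would invoke Lemma \ref{lem:right-aisle-cotilt}: every $A \in \mcG \subseteq \Dtge0_C$ is isomorphic in $\Der R$ to a complex $D^\bullet = (0 \to D^0 \to D^1 \to \cdots)$ in non-negative degrees with $D^i \in \Prod C$. The obvious chain map $f\dd D^\bullet \to D^0$ (viewing $D^0$ as a complex concentrated in degree zero, with $f$ the identity there and zero elsewhere) provides a morphism $A \to D^0$ in $\Der R$. I would then check that $\cone(f) \in \mcG$ by applying $\Hom_{\Der R}(-, \Sigma^i C)$ to the triangle $A \to D^0 \to \cone(f) \to \Sigma A$ for each $i \ne 0$, exploiting $A, D^0 \in \mcG$. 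By Proposition \ref{prop:heart}(2) this produces a short exact sequence $0 \to A \to D^0 \to \cone(f) \to 0$ in $\mcG$ with $D^0 \in \Prod C \subseteq \mcX_C$, establishing cogeneration for both classes at once.

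It remains to verify the dual of axiom (R1). For $\Prod C$ this is immediate from injectivity: any sequence $0 \to P \to Y \to Z \to 0$ in $\mcG$ with $P \in \Prod C$ splits, and $\Prod C$ is closed under finite direct sums and retracts. For $\mcX_C$, I would consider a short exact sequence $0 \to X \to Y \to Z \to 0$ in $\mcG$ with $X \in \mcX_C$ and pass to the triangle $X \to Y \to Z \to \Sigma X$ in $\Der R$. If additionally $Z \in \mcX_C$, so that $X,Z$ are both modules, the long exact cohomology sequence forces $Y$ to be a module fitting in an honest short exact sequence in $\ModR$, and $Y \in \mcX_C$ follows from the long exact sequence for $\Ext^*_R(-, C)$. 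The hard part is the opposite direction---assuming $X, Y \in \mcX_C$ and deducing $Z \in \mcX_C$. Here cohomology identifies $H^{-1}(Z)$ with the module-theoretic kernel of $X \to Y$ and $H^0(Z)$ with the corresponding cokernel, with $H^i(Z) = 0$ otherwise, so \emph{a priori} $Z$ need not be a module. This main obstacle is removed by the observation that $Z \in \mcG \subseteq \Dtge0_C$ is isomorphic in $\Der R$ to a complex in non-negative degrees by Lemma \ref{lem:right-aisle-cotilt}, forcing $H^{-1}(Z) = 0$. Consequently $X \to Y$ is already injective in $\ModR$, $Z$ is genuinely the cokernel module, and $Z \in \mcG$ yields $\Ext^i_R(Z, C) = \Hom_{\Der R}(Z, \Sigma^i C) = 0$ for $i > 0$, so $Z \in \mcX_C$.
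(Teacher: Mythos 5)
Most of your argument is correct, and in one place you take a genuinely simpler route than the paper: for closure of $\mcX_C$ under cokernels of monomorphisms you observe that $Z \in \mcG \subseteq \Dtge0_C$ forces $H^{-1}_R(Z)=0$ by Lemma~\ref{lem:right-aisle-cotilt}, so $X \to Y$ is injective in $\ModR$ and $Z$ is its honest cokernel, whence $Z \in \mcG \cap \ModR = \mcX_C$; the paper instead factors an $\mcF_0$-preenvelope $X \to F_X$ through $X \to Y$ using $\Hom_{\Htp R}(Z,\Sigma F_X)=0$. Your splitting argument for $\Prod C$ via injectivity of its objects in $\mcG$ is also fine (only the $\Ext^1$ case of Proposition~\ref{prop:heart}(3) is available, but that is all you use), and the extension-closure and retract-closure checks are unproblematic.

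The gap is in the cogeneration step. You claim that $\cone(f) \in \mcG$ follows by applying $\Hom_{\Der R}(-,\Sigma^i C)$ to the triangle $A \to D^0 \to \cone(f) \to \Sigma A$ ``exploiting $A, D^0 \in \mcG$'', but this fails at $i=1$: the long exact sequence only gives $\Hom_{\Der R}(\cone(f),\Sigma C) \cong \Coker\big(\Hom_{\Der R}(D^0,C) \mapr{f^*} \Hom_{\Der R}(A,C)\big)$, and membership of $A$ and $D^0$ in $\mcG$ says nothing about surjectivity of $f^*$; for an arbitrary morphism between objects of $\Dtge0_C$ the cone lands only in $\Dtge{-1}_C$, so the degree-one Hom group is genuinely uncontrolled by formal $t$-structure reasoning. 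The vanishing holds here only because of the specific shape of $D^\bullet$ and $f$: $\cone(f)$ is, up to isomorphism, the brutally truncated complex $D^1 \to D^2 \to \cdots$ placed in degrees $\ge 0$ with components in $\Prod C$, hence cofibrant for the cotilting model structure, and Corollary~\ref{cor:compute-in-DerR-cotilt} lets one compute its maps to $\Sigma^i C$ in $\Htp R$, where the $i=1$ case vanishes for trivial degree reasons (equivalently: every chain map $D^\bullet \to C$ is a module map $D^0 \to C$ composed with $f$, so $f^*$ is onto). This homotopy-level computation is exactly what the paper's proof supplies and what your write-up omits; without it the short exact sequence $0 \to A \to D^0 \to \cone(f) \to 0$ in $\mcG$, and hence the cogeneration claim for $\Prod C$ and $\mcX_C$, is not yet justified.
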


\begin{proof}
Both $\mcX_C$ and $\Prod C$ are clearly closed under retracts and extensions in $\mcG$.

Next we claim that $\mcX_C$ is closed under cokernels of monomorphisms in $\mcG$. To this end, let $0 \to X \to Y \to Z \to 0$ be a short exact sequence in $\mcG$ with $X,Y \in \mcX_C$, so in particular $X,Y \in \ModR$. Thanks to Proposition~\ref{prop:heart}(2), there exists a triangle in $\Der R$ of the form
\[ X \mapr{f} Y \la Z \la \Sigma X \]
and the complex $Z$ is up to isomorphism in $\Der R$ of the form
\[ Z\dd \quad \cdots \la 0 \la 0 \la X \mapr{f} Y \la 0 \la 0 \la \cdots, \]
with $X$ in cohomological degree $-1$ and $Y$ in degree $0$. This complex $Z$ is also cofibrant \wrt the cotilting model structure on $\Cpx R$; see Theorem~\ref{thm:cotilt-models}.

Consider now the complete cotorsion pair $(\mcX_C,\mcF_0)$ and an approximation sequence $0 \to X \overset{i}\to F_X \to C_X \to 0$ with $F_X \in \mcF_0$ and $C_X \in \mcX_C$ in $\ModR$. Since $X \in \mcX_C$, we have $F_X \in \mcX_C \cap \mcF_0 = \Prod C$ by Proposition~\ref{prop:cotilt-cot-pair}. Thus, since $Z \in \mcG \subseteq \Dtge0_C$ (see Notation~\ref{nota:cotilt-t-str}), we have $\Hom_{\Der R}(Z,\Sigma F_X) = 0$ and by Corollary~\ref{cor:compute-in-DerR-cotilt} also $\Hom_{\Htp R}(Z,\Sigma F_X) = 0$. This in particular means that the morphism $i\dd X \to F_X$ factors through $f\dd X \to Y$ and $f$ is a monomorphism in $\ModR$. Consequently up to isomorphism in $\mcG$ we have $Z \in \mcG \cap \ModR = \mcX_C$, proving the claim.

A similar argument works for $X,Y \in \Prod C$. The difference in this case is that $i\dd X \to F_X$ splits as $\Ext^1_R(C_X,X) = 0$, so $f\dd X \to Y$ splits as well and up to isomorphism $Z \in \Prod C$.

Since $\Prod C \subseteq \mcX_C$, it remains to prove that $\Prod C$ is cogenerating in $\mcG$. Given $X \in \mcG$, we can assume by Lemma~\ref{lem:right-aisle-cotilt} that $X$ as an object of $\Cpx R$ is of the form
\[ \cdots \la 0 \la 0 \la X^0 \mapr{\dif^0} X^1 \la X^2 \la \cdots \]
with $X^i \in \Prod C$ for all $i \ge 0$. Let us consider the triangle corresponding to the brutal truncation of $X$ at the differential $\dif^0\dd X^0 \to X^1$:
\[ X' \la X \la X^0 \la \Sigma X'. \]
In particular, the suspension $\Sigma X'$ is of the form
\[ \Sigma X'\dd \quad \cdots \la 0 \la 0 \la X^1 \la X^2 \la X^3 \la \cdots \]
with each $X^i$ in cohomological degree $i-1$. Since both $X$ and $\Sigma X'$ are then cofibrant \wrt the cotilting model structure on $\Cpx R$, we have
\begin{multline*}
\Hom_{\Der R}(\Sigma X',\Sigma^i C) \cong \Hom_{\Htp R}(\Sigma X',\Sigma^i C) \cong \\
\cong \Hom_{\Htp R}(X,\Sigma^{i-1} C) \cong \Hom_{\Der R}(X,\Sigma^{i-1} C) = 0
\end{multline*}
for all $i \in \bbZ \setminus \{0,1\}$. Clearly also $\Hom_{\Der R}(\Sigma X',\Sigma C) = 0$. Hence $\Sigma X' \in \mcG$ and, invoking Proposition~\ref{prop:heart}(2), we obtain a short exact sequence
\begin{equation} \label{eqn:co-t-str-trunc2}
0 \la X \la X^0 \la \Sigma X' \la 0
\end{equation}
in $\mcG$ with $X^0 \in \Prod C$.
\end{proof}

Next we establish a certain analogue of Lemma~\ref{lem:way-out} where the roles of $\Der R$ and $\Der\mcG$ are switched.

\begin{lem} \label{lem:heart-cohomology}
Let $R$, $C$, $\mcX_C$ and $\mcG$ be as above and let $n$ be the injective dimension of $C$ in $\ModR$. Then for each $X \in \mcG \subseteq \Der R$ we have $H^i_R(X) = 0$ for all $i<0$ and $i>n$.
\end{lem}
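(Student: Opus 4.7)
The lower bound $H^i_R(X)=0$ for $i<0$ is immediate: since $X\in\mcG\subseteq\Dtge0_C$, Lemma~\ref{lem:right-aisle-cotilt} lets us represent $X$ in $\Der R$ by a complex concentrated in cohomological degrees $\ge 0$, whose cohomology is thus zero in negative degrees. So the substance of the lemma is the upper bound.

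For the upper bound, the plan is to compute $\RHom_R(X,Q)$ two different ways for $Q$ an injective cogenerator of $\ModR$ chosen as in (BC3). By the remark following Definition~\ref{defn:big-cotilt} (referring to \cite[Proposition~3.5]{Baz04}) we may take a resolution
\[ 0\la C_n\la\dots\la C_1\la C_0\la Q\la 0 \]
of length precisely equal to the injective dimension $n$ of $C$, with every $C_k\in\Prod C$. Viewing this as a quasi-isomorphism, $Q$ is isomorphic in $\Der R$ to the bounded complex $C_\bullet=[C_n\to\cdots\to C_0]$ concentrated in degrees $[-n,0]$.

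The key claim is that $\RHom_R(X,Q)$ is concentrated in cohomological degrees $[-n,0]$. Since $X\in\mcG$ means $\Hom_{\Der R}(X,\Sigma^j C)=0$ for all $j\neq 0$, the complex $\RHom_R(X,C)$ is concentrated in degree $0$. Using $\RHom_R(X,C^I)\cong\RHom_R(X,C)^I$ in $\Der\Ab$ and passing to summands, $\RHom_R(X,C_k)$ is concentrated in degree $0$ for every $k$, since $C_k\in\Prod C$. An easy induction on the length of $C_\bullet$, using the triangles coming from stupid truncations $C_\bullet^{\le -1}\to C_\bullet\to C_0\to\Sigma C_\bullet^{\le -1}$ and the triangulated functor $\RHom_R(X,-)$, then shows that $\RHom_R(X,C_\bullet)$ is quasi-isomorphic to the total complex $[\Hom_{\Der R}(X,C_n)\to\cdots\to\Hom_{\Der R}(X,C_0)]$ concentrated in degrees $[-n,0]$, proving the claim.

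On the other hand, since $Q$ is injective, the functor $\Hom_R(-,Q)$ is exact, so the underived total Hom complex $\Hom_R(X,Q)$ already represents $\RHom_R(X,Q)$ in $\Der\Ab$ and moreover $H^j\bigl(\Hom_R(X,Q)\bigr)\cong\Hom_R(H^{-j}(X),Q)$ by applying exact $\Hom_R(-,Q)$ to the short exact sequence $0\to H^{-j}(X)\to X^{-j}/\Img\dif^{-j-1}\to\Img\dif^{-j}\to 0$. Combining this with the vanishing from the previous paragraph yields $\Hom_R(H^i(X),Q)=0$ whenever $i<0$ or $i>n$, and since $Q$ is a cogenerator of $\ModR$ this forces $H^i_R(X)=0$ in that range. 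The main obstacle is the careful justification that $\RHom_R(X,C_\bullet)$ collapses to a complex concentrated in $[-n,0]$; once the concentration of $\RHom_R(X,C_k)$ in a single degree is established for each $k$, this is routine homological book-keeping.
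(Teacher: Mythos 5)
Your argument is correct and is essentially the paper's own (only sketched there): combine the defining property of $\mcG$ with the length-$n$ $\Prod C$-resolution of an injective cogenerator $Q$ from (BC3) with $r=n$, deduce that $\RHom_R(X,Q)$ is concentrated in degrees $[-n,0]$, and then use exactness of $\Hom_R(-,Q)$ and the cogenerator property to kill $H^i_R(X)$ outside $[0,n]$. The separate treatment of the lower bound via Lemma~\ref{lem:right-aisle-cotilt} is fine but redundant, since the $Q$-computation already yields both bounds.
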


\begin{proof}
This quickly follows by combining the description of $\mcG$ in Definition~\ref{defn:tilted-catg} with the existence of an exact sequence as in Definition~\ref{defn:big-cotilt}(BC3) with $r = n$. Such a sequence exists by~\cite[Proposition 3.5]{Baz04}.
\end{proof}

Now we can prove that $\mcX_C$ is a dual blueprint for $\bbD_\mcG$.

\begin{prop} \label{prop:X_C-dual-blueprint}
Let $R$, $C$, $\mcX_C$ and $\mcG$ be as above. Then $\mcX_C$ is a dual blueprint for $\bbD_\mcG$.
\end{prop}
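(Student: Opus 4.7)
The plan is to verify the two conditions of Definition~\ref{defn:blueprint} in the dual form: that $\mcX_C$ is functorially coresolving in $\mcG$, and that the $\mcX_C$-coresolution dimension of every object of $\mcG$ is bounded by $n$, the injective dimension of $C$.

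Lemma~\ref{lem:X_C-coresolving} already establishes that $\Prod C$ (and hence $\mcX_C$) is coresolving in $\mcG$. For the functoriality, I would observe that the short exact sequence $0 \to X \to X^0 \to \Sigma X' \to 0$ produced in that lemma ultimately uses the canonical $\Prod C$-preenvelope $Y \to C^{\Hom_{\Der R}(Y,C)}$ from the proof of Lemma~\ref{lem:right-aisle-cotilt}, which is manifestly functorial in $Y$. Taking this preenvelope (followed by its cokernel in $\mcG$) yields a functorial coresolution step, and iterating it preserves functoriality throughout the coresolution.

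For the dimension bound I combine Lemma~\ref{lem:right-aisle-cotilt} with Lemma~\ref{lem:heart-cohomology}. Given $X \in \mcG$, apply the construction of Lemma~\ref{lem:X_C-coresolving} to obtain the underlying triangle $X \to X^0 \to \Sigma X' \to \Sigma X$ in $\Der R$ with $X^0 \in \Prod C \subseteq \ModR$. The long exact $R$-cohomology sequence then yields $H^i_R(\Sigma X') \cong H^{i+1}_R(X)$ for all $i \ge 1$, so by Lemma~\ref{lem:heart-cohomology} the cohomological amplitude of $\Sigma X'$ is strictly smaller than that of $X$. Iterating the construction produces short exact sequences
\begin{gather*}
0 \to \Sigma^{k-1} X'_{k-1} \to X^{k-1} \to \Sigma^{k} X'_{k} \to 0 \qquad (1 \le k \le n)
\end{gather*}
in $\mcG$ (with the convention $\Sigma^0 X'_0 = X$), where each $X^{k-1} \in \Prod C$ and, after $n$ steps, $\Sigma^n X'_n$ has cohomology concentrated in degree zero and therefore lies in $\mcG \cap \ModR = \mcX_C$. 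Splicing yields a coresolution of $X$ in $\mcX_C$ of length at most~$n$:
\begin{gather*}
0 \to X \to X^0 \to X^1 \to \cdots \to X^{n-1} \to \Sigma^n X'_n \to 0.
\end{gather*}

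The main technical point is to verify simultaneously at each stage that $\Sigma^k X'_k$ remains in $\mcG$ (so that Proposition~\ref{prop:heart}(2) applies to convert the triangle in $\Der R$ into a short exact sequence in $\mcG$) and that the cohomological amplitude strictly decreases. Together with Lemma~\ref{lem:heart-cohomology}, these guarantee termination after exactly $n$ iterations and deliver the required uniform bound, completing the verification that $\mcX_C$ is a dual blueprint for $\bbD_\mcG$.
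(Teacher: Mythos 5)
Your proposal is correct and takes essentially the same route as the paper: coresolving plus a functorial preenvelope into a power of $C$, and the dimension bound by iterating the short exact sequence $0 \to X \to X^0 \to \Sigma X' \to 0$ of Lemma~\ref{lem:X_C-coresolving} while tracking cohomological amplitude via Lemma~\ref{lem:heart-cohomology} (the paper merely packages this bookkeeping as the filtration $\mcX_C = \mcT_0 \subseteq \cdots \subseteq \mcT_n = \mcG$). One small caution: the full construction in Lemmas~\ref{lem:right-aisle-cotilt} and~\ref{lem:X_C-coresolving} involves non-functorial (cofibrant) replacements, so the clean way to get functoriality---as the paper does via the argument of Lemma~\ref{lem:functor-for-free}---is to use only the canonical monomorphism $X \to C^{\Hom_\mcG(X,C)}$ in $\mcG$, which is all that Definition~\ref{defn:blueprint} requires.
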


\begin{proof}
Although we have not proved yet that $\mcG$ is a Grothendieck category, the proof of Lemma~\ref{lem:functor-for-free} is still valid to show that both $\Prod C$ and $\mcX_C$ are functorially coresolving in $\mcG$. Indeed, since $\Prod C \subseteq \mcG$, arbitrary products of copies of $C$ exist in $\mcG$. Thus, for each $X \in \mcG$ we have the functorial monomorphism $X \to C^{\Hom_\mcG(X,C)}$.

It remains to prove that the $\mcX_C$-coresolution dimension of objects of $\mcG$ is uniformly bounded. Let $n$ be the injective dimension of $C$ in $\ModR$. Let us denote by $\mcT_j$, for $j \in \{0,1, \dots, n\}$, the class
\[ \mcT_j = \{ X \in \mcG \mid H^i_R(X) = 0 \textrm{ for all } i < 0 \textrm{ and } i > j \}. \]
Then, up to closing $\mcX_C$ under isomorphisms in $\mcG$, we have the following:
\[ \mcX_C = \mcT_0 \subseteq \mcT_1 \subseteq \cdots \subseteq \mcT_n = \mcG. \]

Suppose that $X \in \mcT_j$ for some $j > 0$ and consider the exact sequence $0 \to X \to X^0 \to \Sigma X' \to 0$ in $\mcG$ from $(\ref{eqn:co-t-str-trunc2})$ in the proof of Lemma~\ref{lem:X_C-coresolving}. It follows from the construction of~$(\ref{eqn:co-t-str-trunc2})$ that $\Sigma X' \in \mcT_{j-1}$. In particular $\Sigma X' \in \mcX_C$ if $j = 1$. Hence the coresolution dimension of $X \in \mcG = \mcT_n$ \wrt $\mcX_C$ is bounded by $n$.
\end{proof}

Now we can prove the main result of the section.

\begin{thm} \label{thm:equiv-derivators}
Let $R$ be a ring, $C \in \ModR$ be a big cotilting module and $\mcG$ be the corresponding tilted abelian category (Definition~\ref{defn:tilted-catg}).

Then the prederivators $\bbD_R$ and $\bbD_\mcG$ (see Construction~\ref{constr:der-abel}) are equivalent and both are strong stable derivators.
In particular, the inclusion $\mcG \subseteq \Der R$ extends to a triangle equivalence $F\dd \Der\mcG \to \Der R$.
\end{thm}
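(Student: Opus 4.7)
My plan is to use the exact category $\mcX_C$ as a bridge between $\bbD_R$ and $\bbD_\mcG$. By Proposition~\ref{prop:X_C-blueprint} the class $\mcX_C$ is a blueprint for $\bbD_R$, so Proposition~\ref{prop:blueprint} supplies an equivalence of prederivators $G_R\dd \bbD_{\mcX_C} \to \bbD_R$ induced by the inclusion $\mcX_C \hookrightarrow \ModR$. Dually, Proposition~\ref{prop:X_C-dual-blueprint} together with Remark~\ref{rem:blueprint-dual} shows that the inclusion $\mcX_C \hookrightarrow \mcG$ induces an equivalence of prederivators $G_\mcG\dd \bbD_{\mcX_C} \to \bbD_\mcG$. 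Choosing a quasi-inverse of $G_\mcG$ as a morphism of prederivators (available by Remark~\ref{rem:equiv-prederiv}) and composing, I set $F = G_R \circ G_\mcG\inv\dd \bbD_\mcG \to \bbD_R$, which is an equivalence of prederivators.

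Since $\bbD_R$ is a strong stable derivator by Propositions~\ref{prop:models-to-derivators} and~\ref{prop:stable-deriv-Grothendieck}(1), part~(3) of Proposition~\ref{prop:stable-deriv-Grothendieck} transports this structure across $F$: $\bbD_\mcG$ is likewise a strong stable derivator and every component $F_I\dd \bbD_\mcG(I) \to \bbD_R(I)$ is a triangle equivalence. Taking $I = \unit$ yields the promised triangle equivalence $F\dd \Der\mcG \to \Der R$.

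The substantive step left to verify---and the place I expect the most care to be required---is that this abstract $F$ agrees on $\mcG$ with the canonical inclusion $\mcG \subseteq \Der R$. Given $X \in \mcG$, iterating the short exact sequence~\eqref{eqn:co-t-str-trunc2} from the proof of Lemma~\ref{lem:X_C-coresolving} produces a bounded coresolution $0 \to X \to X^0 \to X^1 \to \cdots \to X^n \to 0$ in $\mcG$ with each $X^i \in \Prod C \subseteq \mcX_C$. Thus $X$ is represented inside $\Der\mcG$ by the complex $X^\bullet \in \Cpx{\mcX_C}$, so $G_\mcG\inv(X) \cong X^\bullet$ in $\Der{\mcX_C}$; since $G_R$ comes from the honest inclusion $\mcX_C \hookrightarrow \ModR$, we obtain $F(X) \cong X^\bullet$ viewed inside $\Der R$. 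To finally identify $X^\bullet$ with $X$ inside $\Der R$, I would splice the coresolution into short exact sequences in $\mcG$, invoke Proposition~\ref{prop:heart}(2) to convert each of them into a triangle in $\Der R$, and argue by induction on $n$ that the totalization recovers $X$. This produces a natural isomorphism between $F|_\mcG$ and the tautological inclusion $\mcG \subseteq \Der R$, completing the proof.
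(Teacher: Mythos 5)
Your proposal follows essentially the same route as the paper: the span of prederivator morphisms $\bbD_R \leftarrow \bbD_{\mcX_C} \rightarrow \bbD_\mcG$ induced by the two inclusions, both equivalences by Propositions~\ref{prop:blueprint}, \ref{prop:X_C-blueprint} and~\ref{prop:X_C-dual-blueprint} with Remark~\ref{rem:blueprint-dual}, with strong stability transported via Proposition~\ref{prop:stable-deriv-Grothendieck} and the final claim obtained by evaluating at $\unit$. Your explicit check that $F|_\mcG$ agrees with the inclusion $\mcG \subseteq \Der R$ (via a bounded $\mcX_C$-coresolution and Proposition~\ref{prop:heart}(2)) is correct and simply spells out what the paper compresses into ``inspecting'' the span evaluated at $\unit$.
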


\begin{proof}
Let $\mcX_C \subseteq \ModR$ be the cotilting class. Propositions~\ref{prop:blueprint}, \ref{prop:X_C-blueprint} and~\ref{prop:X_C-dual-blueprint} together with Remark~\ref{rem:blueprint-dual} imply that the morphisms of prederivators
\begin{equation} \label{eqn:equiv-derivators}
\bbD_R \longleftarrow \bbD_{\mcX_C} \la \bbD_\mcG
\end{equation}
induced by the inclusions of $\mcX_C$ in $\ModR$ and $\mcG$, respectively, are equivalences. Since $\bbD_R$ is a strong stable derivator by Proposition~\ref{prop:stable-deriv-Grothendieck}(1), so are $\bbD_{\mcX_C}$ and $\bbD_\mcG$. In particular, $\bbD_\mcG(I)$ is a legitimate category in the set theoretic universe which we started with for each $I \in \Cat$.

The last statement follows by inspecting $(\ref{eqn:equiv-derivators})$ evaluated at $\unit \in \Cat$ and by Proposition~\ref{prop:stable-deriv-Grothendieck}(3).
\end{proof}

\section{Properties of the heart of a cotilting $t$-structure}
\label{sec:heart}

It is time to prove that the abelian category $\mcG$ tilted with respect to a big cotilting module $C$ is a Grothendieck category with a classical tilting object. It will turn out that the tilting object is necessarily finitely presentable. Finally, we will show that there is a bijective correspondence between certain equivalence classes of module categories with a big cotilting object and equivalence classes of Grothendieck categories with a classical tilting object.

\subsection{The heart is a Grothendieck category}
\label{subsec:heart-Grothendieck}

We start with an easy consequence of the previous results.

\begin{lem} \label{lem:enough-inj}
Let $C$ be a big cotilting $R$-module and $\mcG$ the tilted category. Then $C \in \mcG$ is an injective cogenerator for $\mcG$. Moreover, an object $W \in \mcG$ is injective if and only if, up to isomorphism, $W \in \Prod C$.
\end{lem}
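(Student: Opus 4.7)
The plan is to verify in turn that $C \in \mcG$, that $C$ is injective in $\mcG$, that $\Prod C$ embeds into every object of $\mcG$, and finally to characterize the injectives. Throughout I would rely on Proposition~\ref{prop:heart}(3), which identifies $\Ext^1_\mcG(X,Y)$ with $\Hom_{\Der R}(X,\Sigma Y)$, together with the Hom-computations available via Corollary~\ref{cor:compute-in-DerR-cotilt}.

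First, $C$ lies in $\mcG$: we have $\Hom_{\Der R}(C,\Sigma^i C) = \Ext^i_R(C,C)$, which vanishes for $i>0$ by (BC2) of Definition~\ref{defn:big-cotilt} applied to a singleton index set, and trivially for $i<0$. Injectivity of $C$ in $\mcG$ is then immediate: for every $X \in \mcG$ the defining condition $\Hom_{\Der R}(X,\Sigma C)=0$ yields $\Ext^1_\mcG(X,C)=0$.

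Next I would upgrade this to the whole of $\Prod C$. For any set $I$, products are exact in $\ModR$ and preserve injectivity, so a product of injective resolutions of $C$ is an injective resolution of $C^I$; hence the product $C^I$ formed in $\ModR$ is also the categorical product of the family $(C)_{i \in I}$ in $\Der R$, and in particular $\Hom_{\Der R}(X,\Sigma^i C^I) \cong \prod_I \Hom_{\Der R}(X,\Sigma^i C)$ for every $X \in \Der R$. Taking $X \in \mcG$ and $i \ne 0$, the right-hand side vanishes, so $C^I \in \mcG$; taking $i=1$ gives $\Ext^1_\mcG(X,C^I)=0$, so $C^I$ is injective in $\mcG$. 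Since direct summands of injectives are injective, every object of $\Prod C$ is injective in~$\mcG$.

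For the cogenerator property I would invoke Lemma~\ref{lem:X_C-coresolving}, which already supplies, for each $X \in \mcG$, a short exact sequence $0 \to X \to X^0 \to \Sigma X' \to 0$ in $\mcG$ with $X^0 \in \Prod C$. Writing $X^0$ as a summand of a product $C^I$ then exhibits $X$ as a subobject of $C^I$, which is the desired cogenerator condition. Combining with the previous step, $C$ is an injective cogenerator of $\mcG$.

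Finally, for the ``moreover'' clause: if $W \in \mcG$ is injective, the cogenerator property furnishes a monomorphism $W \hookrightarrow C^I$, and by injectivity of $W$ this splits, so $W \in \Prod C$; the converse was established above. The only subtle step is the identification of $C^I$ (computed in $\ModR$) with the product in $\mcG$, so that ``cogenerator'' has its intended meaning, and this is precisely what the injective-resolution argument---or equivalently Corollary~\ref{cor:compute-in-DerR-cotilt}---settles. I do not foresee any other serious obstacle.
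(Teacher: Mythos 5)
Your overall route is the paper's: $C$ is injective in $\mcG$ because $\Ext^1_\mcG(X,C)\cong\Hom_{\Der R}(X,\Sigma C)=0$ by Proposition~\ref{prop:heart}(3) and Definition~\ref{defn:tilted-catg}, the cogenerator property is imported from Lemma~\ref{lem:X_C-coresolving}, products of copies of $C$ taken in $\ModR$ agree with those in $\Der R$ (hence in $\mcG$), and the ``moreover'' clause follows by splitting the embedding of an injective $W$ into a product of copies of $C$. However, one step is misjustified: you deduce ``$C^I\in\mcG$'' from the vanishing of $\Hom_{\Der R}(X,\Sigma^i C^I)\cong\prod_I\Hom_{\Der R}(X,\Sigma^i C)$ for $X\in\mcG$ and $i\ne 0$. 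That checks the wrong orthogonality: by Definition~\ref{defn:tilted-catg}, membership of $C^I$ in the heart means $\Hom_{\Der R}(C^I,\Sigma^i C)=0$ for all $i\ne 0$, i.e.\ it concerns maps \emph{out of} $C^I$ into shifts of $C$, not maps from objects of $\mcG$ into shifts of $C^I$. The condition you verify is not, without a further argument about the $t$-structure, equivalent to lying in the heart, so as written this inference is a non sequitur.

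The repair is immediate and is exactly the argument you already used for $C$ itself: $\Hom_{\Der R}(C^I,\Sigma^i C)\cong\Ext^i_R(C^I,C)$, which vanishes for $i>0$ by (BC2) of Definition~\ref{defn:big-cotilt} and for $i<0$ because both objects are modules; hence $\Prod C\subseteq\mcG$. Once $C^I\in\mcG$ is known, your computation $\Ext^1_\mcG(X,C^I)\cong\Hom_{\Der R}(X,\Sigma C^I)\cong\prod_I\Hom_{\Der R}(X,\Sigma C)=0$ does correctly give injectivity of $C^I$ in $\mcG$ (here your identification of the module product $C^I$ with the product in $\Der R$, via exactness of products in $\ModR$, is fine), and the remainder of your argument---cogeneration via the sequence $0\to X\to X^0\to\Sigma X'\to 0$ from Lemma~\ref{lem:X_C-coresolving} and the splitting argument characterizing the injectives as $\Prod C$---goes through and coincides with the paper's proof.
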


\begin{proof}
Recall that products of copies of $C$ exist in $\mcG$ and agree with the products in $\Der R$. We know from Lemma~\ref{lem:X_C-coresolving} that $C$ is a cogenerator for $\mcG$. Given an arbitrary $X \in \mcG$, Proposition~\ref{prop:heart}(3) and Definition~\ref{defn:tilted-catg} imply that
\[ \Ext^1_\mcG(X,C) \cong \Hom_{\Der R}(X,\Sigma C) = 0. \]
Thus, $C$ is injective. Finally, $W$ is injective \iff the embedding $W \to C^{\Hom_\mcG(W,C)}$ splits \iff $W \in \Prod C$.
\end{proof}

Now we can prove that $\mcG$ is a Grothendieck category with a tilting object, which establishes a converse of Theorem~\ref{thm:cotilt-from-tilt} and generalizes~\cite[Theorem 4.2]{CGM07} for cotilting modules of injective dimension $>1$. We shall use a trick with a functor category suggested by Ivo Herzog.

\begin{thm} \label{thm:heart-Grothendieck}
Let $R$ be a ring, $C \in \ModR$ be a big cotilting module and $\mcG$ be the corresponding tilted abelian category (Definition~\ref{defn:tilted-catg}). Then $\mcG$ is a Grothendieck category.

Moreover, $\mcG$ admits a classical tilting object $T \in \mcG$ (see Definition~\ref{defn:tilt} and Proposition~\ref{prop:tilt-char}) \st $\End_{\mcG}(T) \cong R$ and $\Ext^{n+1}_\mcG(T,-) \equiv 0$, where $n$ is the injective dimension of $C$ in $\ModR$. In particular we can choose $T = R$, considered as an object of $\mcG$.
\end{thm}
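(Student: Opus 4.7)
The argument splits into two claims: that $\mcG$ is a Grothendieck category, and that $R \in \mcG$ is a classical tilting object with $\End_\mcG(R) \cong R$ and $\Ext^{n+1}_\mcG(R,-) \equiv 0$. I will dispose of the tilting property first, since it reduces to results already established, and then address the Grothendieck property, which is the main obstacle.

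For the tilting property, the key observation is that $R \in \mcX_C \subseteq \mcG$, as $\Ext^i_R(R, C) = 0$ for all $i \ge 1$. Iterating Proposition~\ref{prop:heart}(3) (or, equivalently, computing via injective coresolutions in $\mcG$ built from $\Prod C \subseteq \mcG$, whose existence is guaranteed by Lemma~\ref{lem:X_C-coresolving}) yields
\[
\Ext^i_\mcG(R, X) \cong \Hom_{\Der R}(R, \Sigma^i X) = H^i_R(X) \quad \text{for all } X \in \mcG.
\]
By Lemma~\ref{lem:heart-cohomology} this vanishes for $i > n$, so $\Ext^{n+1}_\mcG(R, -) \equiv 0$; specializing to $X = R$ gives rigidity (T2) and $\End_\mcG(R) = H^0_R(R) = R$. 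Under the triangle equivalence $\Der\mcG \to \Der R$ extending the inclusion $\mcG \subseteq \Der R$ (Theorem~\ref{thm:equiv-derivators}), the object $R \in \mcG$ corresponds to $R \in \ModR$, so it inherits compactness and triangulated generation in $\Der\mcG$, verifying (T1) and (T3). Proposition~\ref{prop:tilt-char} then concludes the tilting assertion.

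The Grothendieck property is the substantive step; following Herzog's suggestion, I plan to realize $\mcG$ as a Giraud (i.e.~Gabriel-reflective) subcategory of an explicit Grothendieck functor category. Since $\mcX_C$ is closed under direct limits in $\ModR$ (Proposition~\ref{prop:cotilt-cot-pair}) and contains a generator for $\ModR$, it is locally presentable as an additive category, so one can find a small additive full subcategory $\mcP \subseteq \mcX_C$ of $\lambda$-presentable objects (for a suitable regular cardinal $\lambda$) that strongly generates $\mcX_C$ under direct limits. Set $\mcH = \mathrm{Fun}(\mcP\op, \Ab)$, a Grothendieck category, and consider the restricted Yoneda functor
\[
\Phi \dd \mcG \la \mcH, \qquad X \longmapsto \Hom_{\Der R}(-, X)|_\mcP.
\]
The plan is to show that $\Phi$ is fully faithful (using coresolutions of objects of $\mcG$ by objects of $\mcX_C$ from Lemma~\ref{lem:X_C-coresolving}), that it admits a left adjoint $\Psi$ via the special adjoint functor theorem, and that $\Psi$ is exact; the last property exhibits $\mcG$ as a Gabriel localization of $\mcH$ and hence transfers the Grothendieck property.

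The hard step is the exactness of $\Psi$. Here the plan is to leverage the cotilting model structure of Theorem~\ref{thm:cotilt-models}, the blueprint equivalence of derivators $\bbD_{\mcX_C} \simeq \bbD_R \simeq \bbD_\mcG$ furnished by Theorem~\ref{thm:equiv-derivators} (together with Propositions~\ref{prop:X_C-blueprint} and~\ref{prop:X_C-dual-blueprint}), and the stability of $\mcX_C$ under direct limits, in order to reduce the exactness question to the exactness of filtered colimits in $\ModR$ applied termwise to $\mcX_C$-coresolutions of short exact sequences in $\mcG$. Once $\mcG$ is known to be Grothendieck, the tilting statement from the first paragraph takes full force through Proposition~\ref{prop:tilt-char}.
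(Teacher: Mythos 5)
Your second half (that $T=R$ lies in $\mcG$, has endomorphism ring $R$, satisfies $\Ext^{n+1}_\mcG(T,-)\equiv 0$ via Lemma~\ref{lem:heart-cohomology}, and inherits (T1)--(T3) through the equivalence of Theorem~\ref{thm:equiv-derivators}) is correct and is essentially the paper's own argument. The problem is the Grothendieck property, where your route breaks down at its first substantive step. You need the restricted Yoneda functor $\Phi\dd \mcG \to (\mcP\op,\Ab)$, $X\mapsto \Hom_{\Der R}(-,X)|_\mcP$ with $\mcP\subseteq\mcX_C$, to be fully faithful, and for that the objects of $\mcX_C$ would at the very least have to form a separating (generating) family in $\mcG$. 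This fails whenever $n\ge 1$: inside $\mcG$ the copy of $\mcX_C$ is $\{X\in\mcG \mid \Ext^i_\mcG(T,X)=0 \textrm{ for all } i>0\}$ (because $\mcG\cap\ModR=\mcX_C$), which is cogenerating by Lemma~\ref{lem:X_C-coresolving} but not generating --- the generators of $\mcG$ are subobjects of finite powers of $T$ (Remark~\ref{rem:generators-of-the-heart}), which in general are not modules. Concretely, take the Kronecker algebra and its big $1$-cotilting module (Remark~\ref{rem:P1-and-RR}), so that $\mcG\simeq\Qco{\PP{1}{k}}$, $T=\OO\oplus\OO(1)$, and $\mcX_C$ corresponds to the sheaves $P$ with $H^1(P)=0=H^1(P(-1))$. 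Any nonzero map $P\to\OO(-2)$ from such a $P$ has image a line bundle $\OO(-d)$ with $d\ge 2$; since $H^2$ vanishes on quasi-coherent sheaves over a curve, the resulting epimorphism $P\to\OO(-d)$ would make $H^1(\OO(-d))\ne 0$ a quotient of $H^1(P)=0$, which is absurd. Hence $\Hom_\mcG(P,\OO(-2))=0$ for every $P\in\mcX_C$, so $\Phi$ annihilates the nonzero object corresponding to $\OO(-2)$ and is not even faithful. (Note also that the $\mcX_C$-coresolutions you invoke would be relevant for a functor of the form $\Hom(X,-)$ restricted to injectives, not for $\Hom(-,X)$ restricted to $\mcX_C$.)

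Beyond this, the remaining steps are only announced: local presentability of $\mcX_C$ does not follow from the cited Proposition~\ref{prop:cotilt-cot-pair}, and the exactness of the left adjoint $\Psi$ --- which you yourself flag as the hard step --- is left as a plan. The paper's implementation of Herzog's suggestion works on the opposite, cogenerating side precisely to avoid these issues: embed $\ModR$ into $\mcA=(\modR\op,\Ab)$ via $M\mapsto M\otimes_R-$, use that $C$ is pure-injective so that its image is an injective object of $\mcA$, form the Gabriel quotient of $\mcA$ by the hereditary torsion class of functors admitting no nonzero map to that image, and then identify $\mcG$ with this quotient by matching the injectives on both sides ($\Prod C$ in $\mcG$, by Lemma~\ref{lem:enough-inj}), using that an abelian category with enough injectives is determined up to equivalence by its subcategory of injectives. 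If you want to keep a localization-of-a-functor-category argument, it has to be run through $\Prod C$ and the pure-injectivity of $C$ in this way, not through a small subcategory of $\mcX_C$ on the generating side.
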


\begin{proof}
We denote by $\modR\op$ the full subcategory of $\ModR\op$ consisting of finitely presented left $R$-modules, and by $\mcA = (\modR\op,\Ab)$ the category of all additive functors $\modR\op \to \Ab$. Then $\mcA$ is a locally coherent Grothendieck category and we have a fully faithful right exact functor
\[ T\dd \ModR \la \mcA, \quad M \longmapsto M \otimes_R -, \]
which preserves products, coproducts and sends pure injective modules to injective objects of $\mcA$; see~\cite[Proposition 1.2]{GrJe81}.

Since $C$ is a pure injective module itself by~\cite[Theorem 13]{St06}, $T(C) \in \mcA$ is injective. Thus, the class
\[ \mcT_C = \big\{ A \in \mcA \mid \Hom_\mcA\big(A,T(C)\big) = 0 \big\} \]
is a hereditary torsion class in $\mcA$. That is, $\mcT_C$ is closed under subfunctors, quotients, extensions and coproducts in $\mcA$.

Hence we can form the Gabriel quotient $Q\dd \mcA \to \mcA/\mcT_C$, where $\mcA/\mcT_C$ is by definition $\mcA[\we\inv]$ and $\we$ is the class of all morphisms in $\mcA$ whose kernel and cokernel belong to $\mcT_C$. We shall denote $\mcA/\mcT_C$ by $\mcG'$; it is an abelian category and $Q$ is an exact functor by~\cite[\S I.3]{GZ67}. In fact, it is well known that $\mcG'$ is a Grothendieck category and $Q$ admits a fully faithful right adjoint $H\dd \mcG' \to \mcA$, whose essential image is equal to
\[ \Img H = \{ X \in \mcA \mid \Hom_\mcA(A,X) = 0 = \Ext^1_\mcA(A,X) \textrm{ for all } A \in \mcT_C \}. \]
We refer to~\cite[\S11.1.1]{Pre09} and~\cite{Pop73,Sten75} for details.

As a right adjoint to an exact functor, $H$ must send injective objects of $\mcG'$ to injective objects of $\mcA$. Since $H$ is also fully faithful, it follows that $X \in \mcG'$ is injective \iff $H(X) \in \mcA$ is injective. Taking into account the definition of $\mcT_C$, we see that $X \in \mcG'$ is injective \iff $H(X) \in \Prod T(C)$; see also~\cite[Theorem 11.1.5]{Pre09}. To summarize, the composition
\[ \ModR \mapr{T} \mcA \mapr{Q} \mcG' \]
restricts to an equivalence between $\Prod C$ and the full subcategory $\mcI' \subseteq \mcG'$ of injective objects of $\mcG'$.

Denoting by $\mcI \subseteq \mcG$ the full subcategory of injective objects of $\mcG$, Lemma~\ref{lem:enough-inj} provides us with an equivalence $\Prod C \to \mcI$.

The composed equivalence $\mcI \to \mcI'$ extends to and equivalence $\mcG \to \mcG'$. Indeed, it is a standard observation that an abelian category with enough injective objects is determined by its full subcategory of injective objects up to equivalence. We refer for instance to~\cite[Proposition IV.1.2]{ARS97} for a dual argument for abelian categories with enough projective objects. Thus, $\mcG$ is a Grothendieck category.

If we put $T = R \in \mcG$, then $T$ satisfies (T1)--(T3) of Proposition~\ref{prop:tilt-char} since $R$ satisfies the same properties in $\Der R$. Here we use Theorem~\ref{thm:equiv-derivators}. Therefore, $T \in \mcG$ is a tilting object whose endomorphism ring is isomorphic to $R$. The extra property that $\Ext^{n+1}_\mcG(T,-) \equiv 0$ is a consequence of the isomorphisms
\[ \Ext^{n+1}_\mcG(T,X) \cong \Hom_{\Der G}(T,\Sigma^{n+1}X) \cong \Hom_{\Der R}(R,\Sigma^{n+1}X) \cong H_R^{n+1}(X) \]
and of Lemma~\ref{lem:heart-cohomology}.
\end{proof}

\begin{rem} \label{rem:left-adj-to-Hom(T,-)}
In the proof of Theorem~\ref{thm:QE} we established the existence of a left adjoint to $\Hom_\mcG(T,-)\dd \Cpx\mcG \to \Cpx R$. Now we can obtain more information on this adjoint functor.

Using the notation from the proof of Theorem~\ref{thm:heart-Grothendieck} and identifying $\mcG'$ with $\mcG$ via the equivalence, we have a diagram of categories and functors
\[ \xymatrix@1{\;\ModR\; \ar@<.7ex>[rr]^-T && \;\mcA\; \ar@<.7ex>[rr]^-Q \ar@<.7ex>[ll]^-E && \;\mcG. \ar@<.7ex>[ll]^-H} \]
The functor $E$ is the right adjoint to $T$ (which exists by the special adjoint functor theorem) and it acts by evaluating a functor $F\dd \modR\op \to \Ab$ at $R$.

It is not difficult to convince oneself that $E \circ H$ and $\Hom_\mcG(T,-)$ are equivalent functors $\mcG \to \ModR$. Indeed, they are both left exact and their restrictions to the class of injective objects $\mcI \subseteq \mcG$ are equivalent. Thus, $Q\circ T$ is a left adjoint to $\Hom_\mcG(T,-)$, both as a functor $\ModR \to \mcG$ and as a functor $\Cpx R \to \Cpx\mcG$.
\end{rem}

\begin{rem} \label{rem:generators-of-the-heart}
As a Grothendieck category, $\mcG$ must have a small generating set. In order to construct one, let $T=R$ be as in Theorem~\ref{thm:heart-Grothendieck}. Then one can show that the set
\[ \mcS = \{ G \in \mcG \mid G \textrm{ is a subobject of } T^r \textrm{ for some } r \in \bbN \} \]
is generating; see for instance \cite[Lemma 3.5]{CGM07}.
\end{rem}

\subsection{Homotopy finitely presentable objects}
\label{subsec:htpy-finite}

Let us now turn our attention to the ways in which we express smallness in the abelian and triangulated context. As in~\cite{AR94,GU71}, we call an object $Z$ of a cocomplete category $\mcG$ \emph{finitely presentable} if given any direct system $(Z_i \mid i \in I)$ in $\mcG$, the canonical morphism of abelian groups
\[ \li_I \Hom_\mcG(Z,X_i) \la \Hom_\mcG(Z, \li_I X_i) \]
is an isomorphism. If $\mcG = \ModR$, finitely presentable objects are well known to coincide with finitely presented modules in the classical sense. If on the other hand $\mcT$ is triangulated (for instance $\mcT = \Der\mcG$ for a Grothendieck category $\mcG$), we have the notion of compactness (Definition~\ref{defn:comp-gen}). In order to link the two previous notions under certain assumptions, we introduce the following concept:

\begin{defn} \label{defn:htpy-fp}
Let $\bbD\dd \Cat\op \to \CAT$ be a stable derivator and let $\mcT = \bbD(\unit)$. An object $Z \in \mcT$ is called \emph{homotopy finitely presentable} if, given any directed poset $I$ and a coherent diagram $X \in \bbD(I)$, the canonical map
\[ \li_I \Hom_\mcT(Z,X_i) \la \Hom_\mcT(Z, \hocolim_I X) \]
is bijective.

To give more details on the construction of the morphism, consider $i \in I$, the embedding $i\dd \unit \to I$ and the projection $p\dd I \to \unit$. We can apply $i^*$ to the unit of adjunction $\eta\dd X \to p^*(\hocolim_I X)$. Since $p\circ i = \id_\unit$ and $i^*\circ p^* = \id_\mcT$, we obtain a morphism $\ep_i\dd X_i \to \hocolim_I X$ in $\mcT$. It is straightforward to check that the morphisms $\ep_i$ actually form a cocone in $\mcT$ from the direct system $(X_i \mid i \in I)$ to $\hocolim_I X$. It remains to apply $\Hom_\mcT(Z,-)\dd \mcT \to \Set$ and construct the colimit morphism in $\Set$.
\end{defn}

If $\bbD$ is a derivator for a ring, we can show that this property coincides with compactness.

\begin{prop} \label{prop:htpy-fp-vs-compact}
Let $R$ be a ring and $\bbD_R$ be the strong stable derivator from Construction~\ref{constr:der-abel}. Then $Z \in \bbD_R(\unit) = \Der R$ is homotopy finitely presentable \iff it $Z$ is compact in $\Der R$.
\end{prop}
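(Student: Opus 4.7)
The plan is to prove the two implications separately, using on one side the stability of the derivator $\bbD_R$ and on the other the classical characterization of compact objects in $\Der R$ as bounded complexes of finitely generated projectives.

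For the direction ``homotopy finitely presentable implies compact'', I would realise an arbitrary coproduct in $\Der R$ as a directed homotopy colimit of finite biproducts. Concretely, given a family $(X_i)_{i\in I}$ in $\Der R$ (represented as objects of $\Cpx R$), let $\Lambda$ denote the directed poset of finite subsets of $I$ ordered by inclusion, and consider the strict functor $\Lambda \to \Cpx R$ sending $F \mapsto \bigoplus_{i \in F} X_i$ with the canonical split injections as transition maps. Since coproducts in $\Cpx R$ are exact filtered colimits of their finite subcoproducts, this strict diagram induces a coherent object of $\bbD_R(\Lambda)$ whose homotopy colimit is $\coprod_{i \in I} X_i$. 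Because $\bbD_R$ is stable, finite coproducts in $\Der R$ agree with finite products (biproducts), whence $\Hom_{\Der R}(Z, \bigoplus_{i \in F} X_i) \cong \bigoplus_{i \in F} \Hom_{\Der R}(Z, X_i)$ in $\Ab$. Applying Definition~\ref{defn:htpy-fp} to pull $\Hom_{\Der R}(Z,-)$ past the directed hocolim and then commuting the resulting filtered colimit past the finite direct sums yields the compactness isomorphism $\coprod_i \Hom_{\Der R}(Z, X_i) \cong \Hom_{\Der R}(Z, \coprod_i X_i)$.

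For the converse ``compact implies homotopy finitely presentable'', I would invoke the classical theorem of Keller and Rickard that every compact $Z \in \Der R$ is isomorphic to a bounded complex $P$ of finitely generated projective $R$-modules. Given a directed poset $I$ and a coherent diagram $X \in \bbD_R(I) = \Der{(\ModR)^I}$, the key observation is that the ordinary colimit functor $\li_I \colon (\ModR)^I \to \ModR$ is exact (since $I$ is directed and $\ModR$ is Grothendieck), and therefore coincides with its total left derived functor $p_! = \hocolim_I$. Representing $X$ by some $\tilde X \in \Cpx{(\ModR)^I} \cong (\Cpx R)^I$, the object $\hocolim_I X$ is just the levelwise colimit $\li_I \tilde X$ taken in $\Cpx R$, and the canonical maps $X_i \to \hocolim_I X$ are the ordinary colimit insertions. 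Since $P$ is a bounded complex of finitely generated projectives, the functor $\Hom_R(P,-) \colon \Cpx R \to \Cpx \Ab$ commutes with filtered colimits and (being a bounded complex of projectives, hence K-projective) computes $\Hom_{\Der R}(P,-)$ directly via $H^0$, with no fibrant replacement of the target required. Combining this with the exactness of filtered colimits in $\Ab$ yields the desired natural isomorphism $\Hom_{\Der R}(Z, \hocolim_I X) \cong \li_I \Hom_{\Der R}(Z, X_i)$.

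The principal subtlety I anticipate is the passage from $\hocolim_I X$ as a derived-categorical object to a strict componentwise colimit. Since $X$ lives in $\Der{(\ModR)^I}$ and not in $(\Der R)^I$, one has to fix an honest representative in $\Cpx{(\ModR)^I}$ and exploit the exactness of $\li_I$ for directed $I$; this is precisely the ingredient that confines Definition~\ref{defn:htpy-fp} to \emph{directed} (rather than arbitrary filtered or even more general) indexing categories. Apart from this identification, both implications reduce to formal manipulations inside the stable derivator together with the two standard characterizations recalled above.
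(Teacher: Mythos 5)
Your proposal is correct and takes essentially the same route as the paper: one direction uses Rickard's characterization of compact objects as bounded complexes of finitely generated projectives together with the identification of $\hocolim_I$ with the exact ordinary directed colimit (so that the total Hom complex commutes with it), and the other realizes an arbitrary coproduct as the directed homotopy colimit of its finite subcoproducts with split inclusions. The only cosmetic difference is how the identification $\hocolim_I \cong \li_I$ is justified (you appeal to exactness of $\li_I$ directly, the paper to uniqueness of the left adjoint of $p^*$), which amounts to the same argument.
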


\begin{proof}
Suppose first that $Z \in \Der R$ is compact. It follows from the proof~\cite[Propositions 6.2 and 6.3]{Rick89} that, up to isomorphism, $Z$ must be a bounded complex of finitely generated projective modules.

Suppose now that we have a directed set $I$ and an object $X \in \bbD_R(I)$. We can view $X$ as an object of $\Cpx R^I$. We claim that $\hocolim_I X$ is simply the colimit of $X$ in $\Cpx R$. Indeed, the colimit functor $\li\dd \Cpx R^I \to \Cpx R$ and the constant diagram functor $\Cpx R \to \Cpx R^I$ are both exact and form an adjoint pair. Hence they induce an adjoint pair of functors between the corresponding derived categories and $\li\dd \bbD(I) \to \bbD(\unit)$ becomes a left adjoint to $p^*$ (using the notation of Definition~\ref{defn:htpy-fp}). Since a left adjoint is unique up to equivalence, we have $\li \cong \hocolim_I$, proving the claim.

Since $Z$ is bounded and its components are finitely generated and projective, the canonical morphism in $\Cpx\Ab$ (see Notation~\ref{nota:total-hom})
\[ \li_I \HOM_R(Z,X_i) \la \HOM_R(Z, \li X) \]
is clearly an isomorphism. Passing to the zero cohomology, we obtain the isomorphism
\[ \li_I \Hom_{\Htp R}(Z,X_i) \la \Hom_{\Htp R}(Z,\hocolim_I X), \]
and also the isomorphism of the corresponding homomorphism groups in $\Der R$. It follows that $Z$ is homotopy finitely presentable.

Conversely, let $Z$ be homotopy finitely presentable and suppose that $(X_j \mid j \in J)$ is a small collection of objects of $\Der R$. Denote by $I$ the poset of all finite subsets of $J$ ordered by inclusion. It is easy to construct a coherent diagram $X \in \bbD(I)$ \st $i^*(X) = \bigoplus_{j \in i} X_j$, $\hocolim_I = \bigoplus_{j \in J} X_j$ and the morphisms $\ep_i\dd i^*(X) \to \hocolim_I X$ from Definition~\ref{defn:htpy-fp} are the canonical split inclusions. The isomorphism $\li_I \Hom_{\Der R}\big(Z, i^*(X)\big) \cong \Hom_{\Der R}(Z, \hocolim_I X)$ then precisely amounts to the defining condition of compactness in Definition~\ref{defn:comp-gen}.
\end{proof}

The equivalence of derivators from Theorem~\ref{thm:equiv-derivators} allows us to conclude that objects in $\mcG$ which are compact in $\Der\mcG$ are finitely presentable in $\mcG$. This in particular applies to the classical tilting object from Theorem~\ref{thm:heart-Grothendieck}, see also Remark~\ref{rem:classical}.

\begin{thm} \label{thm:comp-gen-to-fp}
Let $C$ be a big cotilting $R$-module and $\mcG$ be the corresponding tilted Grothendieck category. Suppose that $Z \in \mcG$ is compact in $\Der\mcG$ (for example if $Z = R \in \mcG$ is the tilting object). Then:
\begin{enumerate}
\item The functor $\Ext^m_\mcG(Z,-)\dd \mcG \to \Ab$ preserves direct limits for all $m \ge 0$. In particular, $Z$ is finitely presentable in $\mcG$.
\item $\Ext^m_\mcG(Z,-) \equiv 0$ for $m \gg 0$.
\end{enumerate}
\end{thm}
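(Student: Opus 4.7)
The plan is to exploit the derivator equivalence $\bbD_R \simeq \bbD_\mcG$ furnished by Theorem~\ref{thm:equiv-derivators} together with Proposition~\ref{prop:htpy-fp-vs-compact}. Since $Z \in \bbD_\mcG(\unit) = \Der\mcG$ is compact, its image under the equivalence is compact in $\Der R$ and hence, by Proposition~\ref{prop:htpy-fp-vs-compact}, homotopy finitely presentable in $\bbD_R$. Because homotopy finite presentability is a property formulated purely in terms of the derivator structure (hocolims over directed posets), it transports back through the equivalence to show that $Z$ is homotopy finitely presentable in $\bbD_\mcG$ itself.

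For part~(1), I would take a direct system $(X_i)_{i \in I}$ in $\mcG$ indexed by a directed poset $I$, view it as a coherent diagram in $\bbD_\mcG(I) = \Der{\mcG^I}$ by placing it in cohomological degree zero, and apply homotopy finite presentability of $Z$ (or rather of $\Sigma^{-m}Z$). The crucial observation is that for such a diagram of \emph{objects} of the Grothendieck category $\mcG$, the homotopy colimit in $\bbD_\mcG(I)$ agrees with the ordinary colimit $\li_I X_i$, because filtered colimits in $\mcG$ are exact and so the colimit functor $\li\dd \mcG^I \to \mcG$ requires no left derivation. Combined with the standard identifications $\Ext^m_\mcG(Z,-) \cong \Hom_{\Der\mcG}(Z, \Sigma^m -)$ on $\mcG$ (Proposition~\ref{prop:heart}(3) and its iteration), this yields commutation of each $\Ext^m_\mcG(Z,-)$ with direct limits over directed posets, and the extension to arbitrary filtered diagrams is routine since every filtered colimit is isomorphic to a directed one.

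For part~(2), I would pass to $\Der R$ via Theorem~\ref{thm:equiv-derivators} and use that compact objects of $\Der R$ are perfect: $Z$ is isomorphic in $\Der R$ to a bounded complex $P$ of finitely generated projective modules, say concentrated in cohomological degrees $[a,b]$. For an arbitrary $X \in \mcG$, Lemma~\ref{lem:right-aisle-cotilt} provides a representative in $\Der R$ with components in $\Prod C$ supported in degrees $[0,n]$, where $n$ is the injective dimension of $C$. In the cotilting model structure of Theorem~\ref{thm:cotilt-models}, $P$ is cofibrant (since projectives lie in $\mcX_C$) and $\Sigma^m X$ remains fibrant, being bounded below with components in $\Prod C$ and thus fulfilling the hypothesis of Corollary~\ref{cor:compute-in-DerR-cotilt}. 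Hence
\[ \Ext^m_\mcG(Z,X) \cong \Hom_{\Der R}(P, \Sigma^m X) \cong \Hom_{\Htp R}(P, \Sigma^m X), \]
and for $m > n - a$ the supports of $P$ and $\Sigma^m X$ in $\Cpx R$ are disjoint, forcing every chain map to vanish. The threshold depends only on $P$ and $n$, not on $X$, so $\Ext^m_\mcG(Z,-) \equiv 0$ uniformly in $\mcG$ for $m \gg 0$.

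The main obstacle I anticipate is the clean identification $\hocolim_I X \cong \li_I X$ inside the derivator formalism. While morally this is just exactness of filtered colimits in $\mcG$, phrasing it rigorously within $\bbD_\mcG$ requires unwinding $\hocolim_I$ as the left derived functor of $\li\dd \mcG^I \to \mcG$; alternatively one can argue directly in the injective model structure of Example~\ref{expl:inj-model} on $\Cpx\mcG$, where filtered colimits already preserve quasi-isomorphisms and convert cofibrant replacements of constant diagrams into cofibrant replacements of colimits.
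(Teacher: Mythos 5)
Your part~(1) is essentially the paper's own argument: transfer compactness through the derivator equivalence of Theorem~\ref{thm:equiv-derivators}, invoke Proposition~\ref{prop:htpy-fp-vs-compact}, and identify $\hocolim_I$ with $\li_I$ for a directed diagram of objects of $\mcG$ by the same exactness/adjoint-uniqueness reasoning used in the proof of that proposition. No issues there.

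In part~(2) there is a concrete gap. You claim that Lemma~\ref{lem:right-aisle-cotilt} provides a representative of $X \in \mcG$ with components in $\Prod C$ \emph{supported in degrees $[0,n]$}. The lemma gives no upper bound: it only produces a complex $0 \to X^0 \to X^1 \to X^2 \to \cdots$ with $X^i \in \Prod C$, bounded below but possibly unbounded above. With such a representative your disjoint-support argument collapses, since for large $m$ the supports of $P$ (in $[a,b]$) and $\Sigma^m X$ (in $[-m,\infty)$) overlap precisely in $[a,b]$. The missing ingredient is the cohomological bound of Lemma~\ref{lem:heart-cohomology} ($H^i_R(X)=0$ for $i<0$ and $i>n$, which rests on the length-$n$ sequence in (BC3) via~\cite[Proposition 3.5]{Baz04}); this is exactly what the paper uses. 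With it you can truncate: the subcomplex $X^0 \to \cdots \to X^{n-1} \to Z^n(X)$ is quasi-isomorphic to $X$ and concentrated in $[0,n]$, but its degree-$n$ term lies only in $\mcX_C$ (a dimension-shifting argument shows $Z^n(X)\in\mcX_C$), not in $\Prod C$, so your appeal to the fibrancy hypothesis of Corollary~\ref{cor:compute-in-DerR-cotilt} also needs repair. The cleanest fix is to drop the cotilting model structure here altogether: since $F(Z)$ is a bounded complex of finitely generated projectives, $\Hom_{\Der R}(F(Z),-)\cong\Hom_{\Htp R}(F(Z),-)$ against \emph{any} complex, and then the degree comparison with the truncated representative (or directly with the vanishing of $H^i_R(\Sigma^m X)$ in degrees $\ge a$ for $m > n-a$) gives the uniform vanishing. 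So the approach is the right one, but as written the key boundedness step is unjustified and attributed to the wrong lemma.
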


\begin{proof}
Since $\bbD_\mcG$ is equivalent to $\bbD_R$, the object $\Sigma^{-m} Z$ is homotopy finitely presentable in $\bbD_\mcG(\unit)$ by Proposition~\ref{prop:htpy-fp-vs-compact}. Suppose now that $X = (X_i \mid i \in I)$ is a direct system in $\mcG$. We can view $X$ as an object of $\bbD(I) = \Der{\mcG^I}$. For the same reason as in the proof of Proposition~\ref{prop:htpy-fp-vs-compact} we can identify $\hocolim_I X$ with the ordinary colimit $\li_I X_i$ in $\mcG$. Part (1) then follows from the isomorphisms
\begin{multline*}
\li_I \Ext^m_\mcG(Z,X_i) \cong \li_I \Hom_{\Der\mcG}(\Sigma^{-m} Z, X_i) \cong \\
\cong \Hom_{\Der\mcG}(\Sigma^{-m} Z,\hocolim_I X) \cong \Ext^m_\mcG(Z,\li X_i).
\end{multline*}

For part (2), we use Lemma~\ref{lem:heart-cohomology} and the equivalence $F\dd \Der\mcG \to \Der R$ from Theorem~\ref{thm:equiv-derivators}. Since $F(Z)$ is isomorphic to a bounded complex with projective components, it follows that there exists $N \in \bbZ$ \st $\Hom_{\Der R}\big(F(Z),\Sigma^m X\big) = 0$ for all $X \in \mcG$ and $m \ge N$, as required.
\end{proof}

\subsection{A bijective correspondence}
\label{subsec:bijection}

We conclude the paper by making precise in what sense the constructions behind Theorems~\ref{thm:cotilt-from-tilt} and~\ref{thm:heart-Grothendieck} are inverse to each other. Consider two ordered triples $(\mcA_1, X_1, Y_1)$ and $(\mcA_2, X_2, Y_2)$ where $\mcA_i$ is a category and $X_i,Y_i \in \mcA_i$ are objects for $i = 1,2$. We say that $(\mcA_1, X_1, Y_1)$ and $(\mcA_2, X_2, Y_2)$ are \emph{equivalent} if there is an equivalence of categories $F\dd \mcA_1 \to \mcA_2$ \st $F(X_1) \cong X_2$ and $F(Y_1) \cong F(Y_2)$.

\begin{thm} \label{thm:bijection}
For every $n \ge 0$, there is a bijective correspondence between:
\begin{enumerate}
\item Equivalence classes of $(\mcG,T,W)$ where $\mcG$ is a Grothendieck category, $T \in \mcG$ is a classical tilting object \st $\Ext^{n+1}_\mcG(T,-) \equiv 0$, and $W \in \mcG$ is an injective cogenerator.
\item Equivalence classes of $(\ModR,R,C)$, where $R$ is a ring and $C \in \ModR$ is a big $n$-cotilting module.
\end{enumerate}

Given $(\mcG,T,W)$, we assign to it $(\ModR,R,C)$ \st $R = \End_\mcG(T)$ and $C = \Hom_\mcG(T,W)$.

Starting with $(\ModR,R,C)$, we take for $\mcG$ the tilted Grothendieck category corresponding to $C$ (Definition~\ref{defn:tilted-catg}), and put $T = R$ and $W = C$.
\end{thm}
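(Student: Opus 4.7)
The plan is to verify well-definedness of both assignments and show they are mutually inverse on equivalence classes. Well-definedness of $(\mcG, T, W) \mapsto (\ModR, R, C)$ is precisely the content of Theorem~\ref{thm:cotilt-from-tilt}, while well-definedness of $(\ModR, R, C) \mapsto (\mcG, R, C)$ combines Theorem~\ref{thm:heart-Grothendieck} (Grothendieck structure, classical tilting object with $\Ext^{n+1}_\mcG(T,-) \equiv 0$, and $\End_\mcG(R) \cong R$) with Lemma~\ref{lem:enough-inj} (which guarantees that $C$ is an injective cogenerator of $\mcG$).

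For the forward-then-backward composition, I would start with $(\mcG, T, W)$, form $C = \Hom_\mcG(T, W)$, and show that the triangle equivalence $F = \RHom_\mcG(T, -)\dd \Der\mcG \to \Der R$ from Proposition~\ref{prop:tilt-char} restricts to an equivalence between $\mcG$ and the tilted heart $\mcG'$ from Definition~\ref{defn:tilted-catg}. The crucial input is that because $W$ is an injective cogenerator of $\mcG$, the Grothendieck spectral sequence degenerates to give
\[ \Hom_{\Der\mcG}(X, \Sigma^i W) \cong \Hom_\mcG(H^{-i}(X), W) \]
for every $X \in \Der\mcG$; since $W$ is a cogenerator, vanishing of the right-hand side for all $i \ne 0$ is equivalent to $X$ being isomorphic to an object of $\mcG$. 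Applying $F$, which sends $W$ to $C$, and comparing with Definition~\ref{defn:tilted-catg}, the essential image of $\mcG$ under $F$ is exactly $\mcG'$. The induced equivalence sends $T$ to $R$ and $W$ to $C$ by construction.

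For the backward-then-forward composition, start with $(\ModR, R, C)$ and pass to $(\mcG, R, C)$ via Definition~\ref{defn:tilted-catg}. Theorem~\ref{thm:heart-Grothendieck} provides a ring isomorphism $R \cong R'$, where $R' = \End_\mcG(R)$, and the resulting equivalence $\Modr{R'} \to \ModR$ by restriction of scalars sends $R'$ to $R$. It remains to show this equivalence sends $C' = \Hom_\mcG(R, C)$ to $C$. Since $C \in \mcG$ is injective there by Lemma~\ref{lem:enough-inj}, we have $\Hom_\mcG(R, C) = \RHom_\mcG(R, C)$, and under the triangle equivalence $\Der\mcG \to \Der R$ of Theorem~\ref{thm:equiv-derivators}, which by construction extends the inclusion $\mcG \subseteq \Der R$ and hence fixes both $R$ and $C$, this becomes $\RHom_R(R, C) \cong C$. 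Unwinding the module structures---the action of $\End_\mcG(R) \cong R$ on $\Hom_\mcG(R, -)$ by precomposition matches, via the isomorphism $R \cong \End_\mcG(R)$ realized by left multiplication, the standard $R$-action on $\Hom_R(R, C) \cong C$---completes the identification.

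The main obstacle is the careful tracking of ring actions and identifications in the backward-then-forward step: ensuring that the equivalence of triples is compatible not only with the underlying module and object but also with the ring structure on $\End_\mcG(R)$ and the right $R$-module structure on the cotilting object. Once the identifications from Theorems~\ref{thm:equiv-derivators} and~\ref{thm:heart-Grothendieck} are applied consistently, however, the computation is essentially formal, and the remaining naturality checks for the equivalence of triples are immediate.
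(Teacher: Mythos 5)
Your proposal is correct and follows essentially the same route as the paper: well-definedness from Theorems~\ref{thm:cotilt-from-tilt} and~\ref{thm:heart-Grothendieck} (with Lemma~\ref{lem:enough-inj}), and mutual inverseness read off from the derived equivalences $\RHom_\mcG(T,-)$ of Theorem~\ref{thm:QE} and the extension of the inclusion $\mcG \subseteq \Der R$ from Theorem~\ref{thm:equiv-derivators}. You merely spell out what the paper leaves implicit, namely the identification of the canonical heart of $\Der\mcG$ via $\Hom$-vanishing against the injective cogenerator $W$ and the bookkeeping of the $R \cong \End_\mcG(R)$-module structures, both of which check out.
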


\begin{proof}
The assignments are well defined by Theorems~\ref{thm:cotilt-from-tilt} and~\ref{thm:heart-Grothendieck}. The fact that they are mutually inverse follows from the descriptions of the corresponding derived equivalences in Theorems~\ref{thm:QE} and~\ref{thm:equiv-derivators}.
\end{proof}

\bibliographystyle{alpha}
\bibliography{references}

\end{document}